\newtheorem{theo}{Theorem}
\newtheorem{defi}{Definition}
\newtheorem{cons}{Construction}
\newtheorem{prop}{Proposition}
\newtheorem{coro}{Corollary}
\newtheorem{lemm}{Lemma}
\newtheorem*{theo*}{Theorem}
\newtheorem*{defi*}{Definition}
\newtheorem*{conv*}{Convention}
\newtheorem*{cons*}{Construction}
\newtheorem*{prop*}{Proposition}
\newtheorem*{coro*}{Corollary}
\newtheorem*{lemm*}{Lemma}
\newtheorem*{conj*}{Conjecture}
\newtheorem*{ques*}{Question}
\newtheorem*{prob*}{Problem}
\newtheorem*{exer*}{Exercise}
\newtheorem*{exem*}{Example}
\newtheorem*{rema*}{Remark}
\newtheorem*{claim*}{Claim}
  \def\II{\mathbb{I}}
 \def\NN{\mathbb{N}} 
  \def\RR{\mathbb{R}}
 \def\TT{\mathbb{T}} 
 \def\ZZ{\mathbb{Z}}
  \def\cF{\mathcal{F}}    \def\cH{\mathcal{H}}
  \def\cJ{\mathcal{J}}    
\def\cM{\mathcal{M}}      
  \def\cR{\mathcal{R}}    \def\cT{\mathcal{T}}
  \def\cV{\mathcal{V}}
\title[Primitive Markov Partitions]{Primitive  Geometric Markov Partitions for pseudo-Anosov Homeomorphisms}
\author{Inti Cruz}
\address{IMATE, UNAM, Oaxaca, Mexico}
\email{incruzd@gmail.com}
\date{\today}
\begin{document}

\begin{abstract}
	Let $f$ be a pseudo-Anosov homeomorphism on a closed, oriented surface. We give an effective construction of Markov partitions for $f$ based on a simple combinatorial criterion deciding when an immersed graph bounds a Markov partition. This yields an explicit algorithm: from a point $z$ at the intersection of stable and unstable separatrices of a singularity of $f$, and a sufficiently large integer $n$, it produces a partition $\mathcal{R}(f,z,n)$.
	
	Applying the algorithm to the first intersection points of $f$ we produces the set of  primitive Markov partitions. We prove the existence of an integer $n(f)$, the compatibility order of $f$, depending only on the conjugacy class of $f$, such that $\mathcal{R}(f,z,n)$ exists for all $n\ge n(f)$ and all first intersection points $z$. Each geometric Markov partition $\mathcal{R}$ has an associated geometric type $T(f,\mathcal{R})$, extending the incidence matrix; it result the geometric type is constant along orbits of primitive partitions, and for $n\ge n(f)$ the set $\mathcal{T}(f,n)$ of primitive geometric types is finite.
	By \cite{IntiThesis}, this family is canonical: two maps are topologically conjugate by an orientation-preserving homeomorphism iff they share the compatibility order  and the primitive geometric types for some $n\ge n(f)$. The types in $\mathcal{T}(f,n(f))$ are minimal and are the canonical Markov partitions of $f$.
\end{abstract}

\subjclass[2020]{Primary 37E30; Secondary 37E15, 37B10, 37C15, 57M50}
\keywords{Pseudo-Anosov homeomorphisms,Topological conjugacy, Geometric Markov partitions, Algorithmic classification}
	\maketitle

\section{Introduction}

A surface is a smooth, closed, orientable $2$-manifold. We study a distinguished family of orientation-preserving homeomorphisms of surfaces, the so-called \emph{generalized pseudo-Anosov homeomorphisms} (\cite{fathi2021thurston}). They extend the pseudo-Anosov diffeomorphisms introduced by Thurston in \cite{thurston1988geometry} by allowing $1$-prong singularities, or \emph{spines}. For brevity, we refer to them simply as pseudo-Anosov maps.  In such article Thurston outlined the classification of orientation-preserving surface diffeomorphisms up to isotopy, later extended by Handel and Thurston to orientation-preserving homeomorphisms in \cite{handelT1985new}. Their theorem states that every orientation-preserving homeomorphism of a surface is isotopic to one that is either periodic, pseudo-Anosov, or reducible. This result underlies the modern study of the mapping class group (\cite{farb2011primer}).

Pseudo-Anosov homeomorphisms play a central role in the study of surface homeomorphisms and are closely related to transitive Anosov flows \cite{fried1983transitive} and to the geometry and topology of $3$-manifolds \cite{Thurston1997}. Several constructions of generalized pseudo-Anosov homeomorphisms are known (see \cite{arnoux1981construction}, \cite{hironaka2006family}, \cite{penner1988construction}, \cite{lanneau2017tell}, \cite{Andre2004unimodal}), and further extensions have appeared more recently \cite{Andre2005extensions}, \cite{boyland2024dynamics}. 

Markov partitions constitute one of the most important tools for studying pseudo-Anosov homeomorphisms. A \emph{Markov partition} $\cR = \{R_i\}_{i=1}^n$ for a \textbf{p-A} homeomorphism $f \colon S \to S$ is a finite cover of the surface $S$ by rectangles $R_i$ (Definition~\ref{Defi: Rectangulo}) such that the image of each rectangle intersects the interior of every rectangle in the family along a horizontal sub-rectangle (Definition~\ref{Defi: Sub rectangles}). It is well known that every pseudo-Anosov homeomorphism admits a Markov partition. The classical construction is due to M.~Shub \cite[Exposition~10.5]{fathi2021thurston}, following ideas introduced by R.~Bowen in \cite{bowen1975equilibrium}. Through the existence of such Markov partitions, the main dynamical properties of pseudo-Anosov homeomorphisms are typically established by constructing a semi-conjugacy between the homeomorphism and the subshift of finite type determined by the incidence matrix of the partition.

In another direction, the 1998 monograph \cite{bonatti1998diffeomorphismes} of C.~Bonatti, R.~Langevin, and E.~Jeandenans opened a new line of research by introducing \emph{geometric Markov partitions} (Definition~\ref{Defi: Geo Markov partition}). These are Markov partitions whose rectangles are endowed with a distinguished orientation of their unstable leaves, we call it  \emph{vertical direction} of the rectangle. Their goal was to use these geometric partition to develop an algorithmic classification of pseudo-Anosov homeomorphisms up to topological conjugacy. A key ingredient in this program is the notion of the \emph{geometric type} of a geometric Markov partition (Definition~\ref{Defi: Geometric type of (f,cR)}). This invariant generalizes the classical incidence matrix: it records not only the number of intersections between the images of rectangles, but also their horizontal order and the changes of vertical orientation induced by $f$.
In \cite{CruzSubmitted}, I.~Cruz extended the ideas of Bonatti and Jeandenans \cite{bonatti1998diffeomorphismes} and established the following result:

\begin{theo}\label{Theo: Total invariant}
	A pair of pseudo-Anosov homeomorphisms admit geometric Markov partitions with the same geometric type if and only if they are topologically conjugate through an orientation-preserving homeomorphism.
\end{theo}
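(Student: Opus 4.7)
The plan is to prove both implications: necessity is essentially formal, while sufficiency requires building the conjugacy from a shared symbolic model.

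For the ``only if'' direction, suppose $g = h \circ f \circ h^{-1}$ with $h$ an orientation-preserving homeomorphism, and let $\cR = \{R_i\}$ be a geometric Markov partition for $f$. Since $h$ sends the invariant foliations of $f$ to those of $g$ and preserves transverse orientations (being orientation-preserving), $h(\cR) := \{h(R_i)\}$ is a geometric Markov partition for $g$ whose vertical orientations, horizontal order of image sub-rectangles, and orientation-change pattern all match those of $\cR$. Hence $T(g, h(\cR)) = T(f, \cR)$.

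For the ``if'' direction, assume $T(f, \cR_f) = T(g, \cR_g) = T$ with underlying incidence matrix $A$. The first step is to attach to each pair its symbolic coding $\pi_f \colon \Sigma_A \to S_f$ and $\pi_g \colon \Sigma_A \to S_g$, where $\Sigma_A$ is the two-sided subshift of finite type determined by $A$; each coding is a continuous surjection intertwining the shift with the pseudo-Anosov dynamics and is injective off the orbits meeting the boundary $\partial \cR$. The core step is to show that the equivalence relation $\sim_f$ on $\Sigma_A$ defined by $\pi_f(x) = \pi_f(y)$ depends only on $T$: from the horizontal order and vertical-orientation data recorded in the geometric type one reads off which symbolic orbits code points of the singular stable/unstable leaves, and which tuples of orbits are identified by $\pi_f$ (for instance, the $2k$ local addresses at a $k$-prong singularity). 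Granting this, one has $\sim_f\, =\, \sim_g$ as equivalence relations on $\Sigma_A$, and the common quotient yields a canonical homeomorphism $h \colon S_f \to S_g$ with $h \circ \pi_f = \pi_g$, which automatically conjugates $f$ to $g$ and is orientation-preserving because $T$ prescribes a consistent local orientation at every rectangle.

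The main obstacle is the combinatorial analysis of the boundary identifications. Rectangles meet along their stable and unstable boundaries, each singularity corresponds to several symbol sequences, and one must show that the geometric type encodes precisely the gluing data needed to reconstruct the topology of $S$. This is where the vertical-orientation and horizontal-order information of $T$, going beyond the bare incidence matrix, becomes indispensable, and where one must exploit the structure of pseudo-Anosov singularities together with the shift dynamics on stable and unstable cylinders to make the identifications compatible on both sides.
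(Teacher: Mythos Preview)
The paper does not contain a proof of this theorem: it is quoted in the introduction as a result established elsewhere (in \cite{CruzSubmitted} and \cite{IntiThesis}). The only part the present paper does prove is the ``only if'' direction, stated separately as Theorem~\ref{Theo: Conjugated partitions same types}, and your argument for that direction matches the paper's: push the geometric Markov partition forward by the conjugating homeomorphism, check that the induced labeling, horizontal order, and vertical-orientation data are preserved.

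For the ``if'' direction there is nothing in the paper to compare against. Your outline is the standard strategy inherited from \cite{bonatti1998diffeomorphismes} and carried out for pseudo-Anosov maps in \cite{CruzSubmitted}: build the symbolic semiconjugacies $\pi_f,\pi_g\colon\Sigma_A\to S_f,S_g$, prove that the fiber equivalence relation $\sim_f$ on $\Sigma_A$ is determined by the geometric type alone, and descend to a homeomorphism of the quotients. That is the right architecture, but as written it is a plan, not a proof. You yourself flag the genuine obstacle---the combinatorial analysis showing that the gluing of rectangle boundaries (and in particular the identification of the several symbolic codes at each $k$-prong singularity) is recoverable from $T$---and then stop. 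That step is the entire content of the theorem; in the cited references it occupies a substantial technical development (tracking which sequences code boundary leaves, which code separatrices of singularities, and verifying that the resulting quotient space is a surface with the correct local models). Until that analysis is actually carried out, the ``if'' direction remains unproved.
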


This theorem provides the foundation for the algorithmic and constructive classification developed in \cite{IntiThesis}.
In addition to proving that the geometric type is a complete invariant for topological conjugacy, that work also determines which abstract geometric types can actually be realized as the geometric type of a geometric Markov partition of a pseudo-Anosov homeomorphism (the \emph{realization problem}), and establishes the existence of a finite algorithm that decides when two geometric types correspond to the same pseudo-Anosov homeomorphism (the \emph{conjugacy problem}). These results will appear in forthcoming articles.

The objective of this article is to introduce a distinguished class of Markov partitions, which we call \emph{primitive Markov partitions}, and to establish their main properties. We will present our results at the end of this introduction; however, we first outline the principal steps and advantages of our construction in order to highlight the significance of this family.

\subsection{Presentation of results}

Our first result is a criterion ensuring that an immersed graph in $S$ forms the boundary of a Markov partition; it is stated in Proposition~\ref{Prop:Compatibles implies Markov partition} and proved at the beginning of Section~\ref{Sec: Contruccion Particion Markov}. Graphs adapted to $f$ are defined in Definition~\ref{Defi: Adapted graph}, and compatible graphs in Definition~\ref{Defi: Compatible graphs}. A $\delta^{s}$-graph consists of stable arcs, each contained in a single stable separatrix, with one endpoint at a singularity and the other at its intersection with an unstable separatrix. Two graphs are compatible when every endpoint of one lies in the other.

A Markov partition $\cR$ is  \emph{adapted} to $f$ (Definition~\ref{Defi: Partion wellsuited/corner/adapted partition}) when the only periodic points on $\partial\cR$ are singularities, and each such singularity occurs at a corner of any rectangle containing it. These are particularly convenient, since singularities of a \textbf{p-A} homeomorphism always lie on the boundary, though not necessarily at corners.

\begin{prop*}[\ref{Prop:Compatibles implies Markov partition}]
	Let $\delta^u$ and $\delta^s$ be graphs adapted to $f$ and compatible. Then the closure in $S$ of each connected component of
	\[
	\overset{o}{S} := S \setminus \left( \bigcup \textbf{Ex}^s(\delta^u) \,\cup\, \bigcup \textbf{Ex}^u(\delta^s) \right)
	\]
	is a rectangle whose stable (resp.\ unstable) boundary is contained in $\bigcup \delta^s$ (resp.\ $\bigcup \delta^u$). Moreover, the family $\mathcal{R}(\delta^s,\delta^u)$ of these rectangles is an adapted Markov partition for $f$.
\end{prop*}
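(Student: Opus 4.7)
The plan is to proceed in three stages. First, establish the rectangle structure on each connected component of $\overset{o}{S}$. Second, show the family $\cR(\delta^s,\delta^u)$ covers $S$ with pairwise disjoint interiors. Third, verify the Markov property and the adaptedness using the invariance built into the notion of an adapted graph.

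For the first stage I fix a connected component $U$ and a point $p \in U$. The local product structure of the stable and unstable singular foliations of $f$ provides through $p$ a stable plaque and an unstable plaque, which I would extend maximally within $\overline{U}$. By definition of $\overset{o}{S}$, the stable arc through $p$ must terminate at points of $\bigcup \delta^u$ (any further extension would cross $\textbf{Ex}^s(\delta^u)$), and symmetrically the unstable arc terminates in $\bigcup \delta^s$. Here compatibility is crucial: every non-singular endpoint of a $\delta^s$-arc already sits in the interior of some $\delta^u$-arc and vice versa, so there are no ``dangling'' endpoints through which $U$ could leak around a corner. Sliding $p$ along its unstable arc then yields a continuous family of stable arcs foliating $\overline{U}$, and hence the rectangle structure whose stable boundary lies in $\bigcup\delta^s$ and whose unstable boundary lies in $\bigcup \delta^u$.

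For the second stage, the rectangles are interior-disjoint by construction (each is the closure of a connected component of an open set). Their union equals the closure of $\overset{o}{S}$, which is all of $S$ because the removed locus $\bigcup\textbf{Ex}^s(\delta^u)\cup\bigcup\textbf{Ex}^u(\delta^s)$ is a one-dimensional graph and hence has empty interior. For the third stage, the Markov property follows from the $f$-invariance hypotheses included in ``adapted'': assuming $f(\delta^s)\subseteq \delta^s$ and $f^{-1}(\delta^u)\subseteq \delta^u$, the image $f(R_i)$ has its stable boundary in $\bigcup\delta^s$ and its unstable boundary in $f(\bigcup\delta^u)$, so for every $R_j$ the intersection $f(R_i)\cap R_j$ is cut out by two stable arcs in $\delta^s$ (forming $\partial^s R_j$) and by a stable saturation of an unstable segment of $f(\partial^u R_i)$ crossing $R_j$; this is by definition a horizontal sub-rectangle. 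The adaptedness of $\cR$ follows because $\partial\cR \subseteq \delta^s \cup \delta^u$ consists of stable and unstable arcs, on which the only $f$-periodic points are the singularities at their common endpoints, and compatibility places each such singularity exactly at a meeting point of $\delta^s$ and $\delta^u$, which is a corner of every adjacent rectangle.

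The principal obstacle will be the local analysis at singularities of $f$. A $k$-prong singularity $x$ has $2k$ sectors bounded by consecutive stable and unstable separatrices, and several arcs of $\delta^s$ and of $\delta^u$ may emanate from $x$. One must verify that each sector restricted to $\overset{o}{S}$ contributes exactly one corner of one rectangle, and that compatibility prevents ``mixed'' sectors in which the prescribed stable and unstable boundaries would fail to meet in a single point. This local picture underpins both the rectangle structure at the corners in Stage~1 and the corner-at-singularity condition for adaptedness in Stage~3; once the sector-by-sector analysis is carried out, the global statement assembles by transverse continuity of the foliations away from the singularities.
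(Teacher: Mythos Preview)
Your three-stage plan matches the paper's structure: first prove each component closure is a rectangle (the paper's Lemma~\ref{Lemm: Equivalent class rectangle}), then note the cover with disjoint interiors, then verify the Markov and adapted properties. The substantive difference is in Stage~3. The paper does not analyze $f(R_i)\cap R_j$ directly; instead it invokes Proposition~\ref{Prop: Markov criterion boundary}, which says a family of rectangles covering $S$ with disjoint interiors is a Markov partition iff $\partial^s\cR$ is $f$-invariant and $\partial^u\cR$ is $f^{-1}$-invariant. Since $\partial^s\cR\subset\cup\delta^s$ and $\partial^u\cR\subset\cup\delta^u$, and adapted graphs come with exactly these invariances, the Markov property is immediate. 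This buys a much cleaner argument than yours, and it also avoids the slip in your Stage~3: the components of $f(\overset{o}{R_i})\cap\overset{o}{R_j}$ are \emph{vertical} sub-rectangles of $R_j$, not horizontal ones, and ``stable saturation of an unstable segment'' is not what cuts them out.

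There is also a small error in Stage~1: you write that the stable arc through $p$ terminates because further extension would cross $\textbf{Ex}^s(\delta^u)$, but $\textbf{Ex}^s(\delta^u)$ consists of \emph{stable} arcs, which a stable plaque cannot cross transversally. What actually blocks the stable arc is $\textbf{Ex}^u(\delta^s)$ (unstable arcs), and compatibility gives $\textbf{Ex}^u(\delta^s)\subset\cup\delta^u$, yielding your stated conclusion. Finally, your sliding argument for the bi-foliated structure is thinner than the paper's: Lemma~\ref{Lemm: Equivalent class rectangle} joins two points of the component by a curve, covers it by finitely many product neighborhoods, and uses this chain to show $I_x\cap J_y$ is a single point; your version does not clearly establish uniqueness of intersection. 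The sector-by-sector analysis at singularities you flag as the main obstacle is not carried out explicitly in the paper either; it is absorbed into the statement that adaptedness ``follows directly'' from the graphs being adapted and compatible.
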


We then apply the criterion of Proposition~\ref{Prop:Compatibles implies Markov partition} to obtain the algorithmic Construction~\ref{Cons: Recipe for Markov partitions} of adapted Markov partitions. The construction begins with a single point  $z \in F^s_p \cap F^u_q$, the intersection of the stable and unstable separatrices of periodic points $p$ and $q$, and proceeds through a sequence of elementary steps that are easily visualized. A key feature of this approach—beyond its simplicity and explicit geometric nature—is that the resulting Markov partition depends only on the initial point $z$ (Corollary~\ref{Coro: Existence adapted Markov partitions}) and on a single integer parameter $n\in\NN$ chosen larger than a threshold $n(z)\in\NN$, the \emph{compatibility coefficient} of $z$ (see Construction~\ref{Cons: Recipe for Markov partitions}, Item~5).

Section~\ref{Sec: Particiones primitivas} applies Construction~\ref{Cons: Recipe for Markov partitions} to a distinguished family of points of a \textbf{p-A} homeomorphism $f$, the \emph{first intersection points} (Definition~\ref{Defi: first intersection points}).  
For singularities $p$ and $q$ with stable and unstable separatrices $F^s_p$ and $F^u_q$, a point  
$z \in F^s_p \cap F^u_q$ is a first intersection point if $(p,z]^s \cap (q,z]^u = \{z\}$.

Proposition~\ref{Prop: Finite number of first intersection points} shows that $f$ admits only finitely many orbits of first intersection points.  
Corollary~\ref{Coro: n(f) number} further states that each such point $z$ carries a \emph{compatibility order} $n(z)\in\NN$ such that, for every $n\ge n(z)$, Construction~\ref{Cons: Recipe for Markov partitions} produces a Markov partition of order $n$, called a \emph{primitive Markov partition} of $f$ (Definition~\ref{Defi: Primitive Markov partitions}). These partitions depend solely on $z$.

Finally, Proposition~\ref{Prop: Conjugates then primitive Markov partition} establishes the following fundamental property: if $f$ and $g$ are pseudo-Anosov homeomorphisms that are topologically conjugate via $h$, then $h$ sends every primitive Markov partition of $f$ to a primitive Markov partition of $g$. In this sense, the family of primitive Markov partitions is canonical.

\begin{prop*}[\ref{Prop: Conjugates then primitive Markov partition}]
	Let $f: S_f \to S_f$ and $g: S_g \to S_g$ be \textbf{p-A} homeomorphisms that are topologically conjugate via a homeomorphism $h: S_f \to S_g$. Let $z \in S_f$ be a first intersection point of $f$. Then:
	\begin{itemize}
		\item[i)] $h(z)$ is a first intersection point of $g$.
		\item[ii)] The compatibility coefficients coincide: $n(z) = n(h(z))$.
		\item[iii)] For every $n \ge n(z)=n(h(z))$, the image under $h$ of the primitive Markov partition of $f$ generated by $z$ and of order $n$ is the primitive Markov partition of $g$ generated by $h(z)$ and of order $n$, i.e.
		$$
		h(\cR(z,n)) = \cR(h(z),n).
		$$
	\end{itemize}
\end{prop*}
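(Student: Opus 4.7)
The plan is to exploit the fact that every object entering the statement — first intersection points, compatibility coefficients, and the output of Construction~\ref{Cons: Recipe for Markov partitions} — is intrinsically defined from the pseudo-Anosov dynamics together with its invariant foliations, so any topological conjugacy $h$ must push them forward faithfully.

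First I would record the basic transport lemma: since $h$ conjugates $f$ to $g$, it sends singularities of $f$ to singularities of $g$ of the same ramification index, and sends the stable (resp.\ unstable) foliation of $f$ to the stable (resp.\ unstable) foliation of $g$. In particular $h(F^s_p) = F^s_{h(p)}$ and $h(F^u_q) = F^u_{h(q)}$, and for any point $w$ on a stable separatrix of $p$ one has $h((p,w]^s)=(h(p),h(w)]^s$, and similarly on unstable separatrices. With this in hand, part (i) is immediate: the defining relation $(p,z]^s \cap (q,z]^u = \{z\}$ is mapped by $h$ to $(h(p),h(z)]^s \cap (h(q),h(z)]^u = \{h(z)\}$, so $h(z)$ is a first intersection point of $g$.

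For parts (ii) and (iii), I would open Construction~\ref{Cons: Recipe for Markov partitions} and track it step by step. The construction produces, from $z$ and an integer $n$, a pair of graphs $\delta^s(z,n), \delta^u(z,n)$ adapted to $f$, whose arcs are segments of separatrices obtained from iterates $f^k(z)$ with $|k|\le n$ and from their first intersections with specific separatrices. Using $h\circ f^k = g^k\circ h$ and the transport lemma above, I would show by induction on the construction steps that
\[
h(\delta^s(z,n)) = \delta^s(h(z),n), \qquad h(\delta^u(z,n)) = \delta^u(h(z),n).
\]
The compatibility of graphs (Definition~\ref{Defi: Compatible graphs}) is an incidence condition between endpoints of one graph and the other, hence preserved under a homeomorphism; consequently the graphs are compatible for $f$ with parameter $n$ if and only if their $h$-images are compatible for $g$ with the same parameter, which yields $n(z)=n(h(z))$. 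For (iii), I would apply Proposition~\ref{Prop:Compatibles implies Markov partition}: the rectangles of $\cR(z,n)$ are the closures of the connected components of the complement of $\textbf{Ex}^s(\delta^u(z,n)) \cup \textbf{Ex}^u(\delta^s(z,n))$, and since $h$ maps unstable leaves of $f$ to unstable leaves of $g$, it intertwines the extension operators $\textbf{Ex}^s$ and $\textbf{Ex}^u$ with their counterparts for $g$. Hence $h$ carries these components onto the components defining $\cR(h(z),n)$, giving $h(\cR(z,n)) = \cR(h(z),n)$.

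The main obstacle I anticipate lies in this last commutation: showing that $h$ intertwines the leaf-saturation operations with those of $g$. Strictly speaking, this requires that $h$ send the invariant foliations of $f$ to those of $g$, which is a standard rigidity fact for pseudo-Anosov conjugacies but needs to be invoked cleanly (either by appealing to uniqueness of invariant foliations up to the conjugacy, or by observing that the unstable leaf through a non-singular point $x$ is characterized dynamically as the set of $y$ with $d(f^{-k}(x),f^{-k}(y))\to 0$, a property preserved by conjugation). Once this is established, everything else in the proof is essentially a bookkeeping verification that the finite, explicit operations of Construction~\ref{Cons: Recipe for Markov partitions} all commute with $h$.
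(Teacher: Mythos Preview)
Your approach is correct and essentially identical to the paper's: establish that $h$ transports foliations, singularities, and separatrix arcs, then verify that each step of Construction~\ref{Cons: Recipe for Markov partitions} commutes with $h$.

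A few misreadings would trip you up when writing out the details, however. First, in the construction neither $\delta^s(z)$ nor $\delta^u(z)$ depends on $n$: one builds $\mathcal{J}^u(z)=\bigcup_{i\ge 0}f^{-i}([q,z]^u)$, then $\delta^s(z)$ from $\mathcal{J}^u(z)$, then $\delta^u(z)$ from $\delta^s(z)$, and only at the last step applies $f^n$ to $\delta^u(z)$. So the transport statements to prove are $h(\delta^s(z))=\delta^s(h(z))$ and $h(\delta^u(z))=\delta^u(h(z))$, after which $h(f^n(\delta^u(z)))=g^n(\delta^u(h(z)))$ is immediate from the conjugacy relation. Second, compatibility (Definition~\ref{Defi: Compatible graphs}) is not merely an endpoint incidence condition: it also requires that the extreme rails of each graph lie in the other, and the paper checks explicitly that $h$ sends extreme rails to extreme rails via the embedded-rectangle characterization in Definition~\ref{Defi: Rail regular/extremal}; your argument as written would skip this half of the verification. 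Third, $\textbf{Ex}^u(\delta^s)$ denotes the \emph{set} of extreme $u$-rails, not a saturation or extension operator; once you know $h$ carries this finite set of arcs to the corresponding set for $g$, part~(iii) follows by matching connected components of the complements, exactly as you outline.
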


Although a \textbf{p-A} homeomorphism $f$ admits infinitely many Markov partitions, these are in general incomparable and lack any natural ordering or structural hierarchy, making their incidence matrices difficult to analyze.  
In contrast, the family of primitive Markov partitions satisfies strong finiteness and ordering properties, as stated in the following corollary.

\begin{coro*}[\ref{Coro: Finite orbits of primitive Markov partitions}]	
	Let $n\geq n(f)$. Then, there exists a finite but nonempty set of orbits of primitive Markov partitions of order $n$.
\end{coro*}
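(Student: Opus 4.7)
The plan is to reduce the corollary to Proposition~\ref{Prop: Finite number of first intersection points}, using the fact that by Definition~\ref{Defi: Primitive Markov partitions} a primitive Markov partition of order $n$ is precisely the partition $\cR(z,n)$ produced by Construction~\ref{Cons: Recipe for Markov partitions} from a first intersection point $z$, and that this construction is $f$-equivariant.

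For finiteness, I would first observe that for any first intersection point $z$ and any $k\in\ZZ$, the point $f^k(z)$ is again a first intersection point, the compatibility coefficient satisfies $n(f^k(z))=n(z)$, and
\[
\cR\bigl(f^k(z),n\bigr)=f^k\bigl(\cR(z,n)\bigr) \qquad \text{for every } n\ge n(z).
\]
This is exactly the content of Proposition~\ref{Prop: Conjugates then primitive Markov partition} applied to the self-conjugacy $h=f^k$ of $f$ with itself, since $f^k$ trivially conjugates $f$ to $f$. Consequently, the assignment $z\mapsto \cR(z,n)$ descends to a well-defined map from $f$-orbits of first intersection points to $f$-orbits of primitive Markov partitions of order $n$, which is surjective by Definition~\ref{Defi: Primitive Markov partitions}. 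Since Proposition~\ref{Prop: Finite number of first intersection points} yields only finitely many orbits in the source, there are only finitely many orbits in the target.

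For nonemptiness, it suffices to exhibit a single first intersection point. Fix any singularity $p$ of $f$; since each stable leaf of a pseudo-Anosov map is dense, the separatrix $F^s_p$ meets some unstable separatrix $F^u_q$ at a point $z_0$. Among the finitely many points of $(p,z_0]^s\cap(q,z_0]^u$, choose one that minimizes the sum of the transverse-measure distances from $p$ along $F^s_p$ and from $q$ along $F^u_q$. Minimality forces the chosen point $z$ to satisfy $(p,z]^s\cap(q,z]^u=\{z\}$, so $z$ is a first intersection point in the sense of Definition~\ref{Defi: first intersection points}. For $n\ge n(f)\ge n(z)$, the partition $\cR(z,n)$ is then primitive of order $n$, producing at least one orbit.

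The main obstacle is the equivariance step. While $\cR(f(z),n)=f(\cR(z,n))$ is intuitively clear---every ingredient of Construction~\ref{Cons: Recipe for Markov partitions}, namely the adapted $\delta^s$- and $\delta^u$-graphs and their compatibility, is defined in terms of $f$-invariant objects---making this rigorous requires checking that every graph-theoretic choice in the recipe is canonically determined by $z$ and $n$ alone. This is precisely what Proposition~\ref{Prop: Conjugates then primitive Markov partition} provides when specialized to $h=f^k$, so the corollary follows once that proposition is in hand.
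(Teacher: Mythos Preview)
Your proof is correct and follows essentially the same route as the paper: both reduce finiteness to Proposition~\ref{Prop: Finite number of first intersection points} via the equivariance $\cR(f^k(z),n)=f^k(\cR(z,n))$ obtained from Proposition~\ref{Prop: Conjugates then primitive Markov partition} with $h=f^k$. Your nonemptiness argument is also fine, though the paper already records the existence of a first intersection point as Lemma~\ref{Lemm: Existencia puntos de primera intersecion}, so you could simply cite that instead of re-deriving it.
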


Given a rectangle $R$, one can choose orientations of the stable and unstable leaves of $\cF^s$ and $\cF^u$ through $R$ compatible with the surface orientation. The induced orientation on the unstable foliation is called the \emph{vertical direction} of $R$ (Definition~\ref{Defi: Geometric rec}).  
A Markov partition whose rectangles carry vertical directions is called a \emph{geometric Markov partition}.  
In analogy with the classical incidence matrix of a Markov partition $(f,\cR)$, the \emph{geometric type} of a geometric Markov partition records not only the number of intersections between rectangles but also their horizontal order and the change of vertical direction under $f$ (Definition~\ref{Defi: Geometric type of (f,cR)}).

Geometric types were introduced by C.~Bonatti and R.~Langevin \cite{bonatti1998diffeomorphismes} in their study of invariant neighborhoods of saddle-type uniformly hyperbolic sets for $C^1$-structurally stable surface diffeomorphisms (\emph{Smale diffeomorphisms}; see \cite{hasselblattkatok2002handbook}).  
This classification program was completed by F.~Béguin in \cite{beguin2002classification,beguin2004smale}.  
Adapting these ideas, we prove our main result, Theorem~\ref{Theo: finite geometric types}, asserting that for every $n$ there are only finitely many primitive geometric types of order $n$.

\begin{theo*}[\ref{Theo: finite geometric types}]
	For every $n \ge n(f)$, the set
	$$
	\cT(f,n) := \{\,T(f,\cR) : \cR \in \cM(f,n)\,\},
	$$
	of \emph{primitive geometric types} of $f$ of order $n$ is finite.
\end{theo*}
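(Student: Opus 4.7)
The plan is to combine two ingredients: the $f$-invariance of the geometric type along orbits of primitive Markov partitions, and the finiteness of such orbits provided by Corollary~\ref{Coro: Finite orbits of primitive Markov partitions}.

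First I would show that $\cM(f,n)$ is $f$-invariant. Given $\cR = \cR(z,n) \in \cM(f,n)$ with generating first intersection point $z$, Proposition~\ref{Prop: Conjugates then primitive Markov partition} applied to the conjugacy $h = f$ from $(S,f)$ to itself yields that $f(z)$ is again a first intersection point, $n(f(z)) = n(z) \le n$, and $f(\cR(z,n)) = \cR(f(z),n)$; hence $f(\cR) \in \cM(f,n)$, and $f$ acts on the set of primitive Markov partitions of order $n$.

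Next I would verify that $T(f,\cR) = T(f,f(\cR))$. Writing $\cR = \{R_i\}$ and $f(\cR) = \{f(R_i)\}$, the identity
\[
f(f(R_i)) \cap f(R_j) \;=\; f\bigl(f(R_i) \cap R_j\bigr)
\]
gives a canonical bijection between the horizontal and vertical sub-rectangles occurring in the intersection pattern of $(f,f(\cR))$ and those of $(f,\cR)$. Since $f$ is an orientation-preserving homeomorphism of $S$ sending the pair $(\cF^s,\cF^u)$ to itself, it preserves the horizontal ordering of sub-rectangles within each rectangle. The canonical vertical direction attached to the rectangles of a primitive partition is intrinsically determined by the generating first intersection point; because $f$ sends the generator of $\cR$ to that of $f(\cR)$, these vertical directions are transported consistently, and the orientation signs $\epsilon\in\{+,-\}$ recorded by the geometric type coincide.

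Consequently the map $\cR \mapsto T(f,\cR)$ factors through the orbit space $\cM(f,n)/\langle f \rangle$, which is finite by Corollary~\ref{Coro: Finite orbits of primitive Markov partitions}; therefore $\cT(f,n)$ is finite. The main obstacle I expect is the $f$-equivariance of the canonical vertical direction: one must ensure that the orientation convention introduced in the definition of a geometric rectangle for a primitive partition commutes with the action of $f$. This should reduce to the observation that the defining first intersection point and its outgoing stable and unstable separatrices are moved coherently by $f$, but it requires carefully unpacking the conventions fixed in Definition~\ref{Defi: Geometric rec} and in Construction~\ref{Cons: Recipe for Markov partitions}.
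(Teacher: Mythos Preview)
Your strategy is the same as the paper's: combine Corollary~\ref{Coro: Finite orbits of primitive Markov partitions} with the invariance of the geometric type along $f$-orbits of primitive partitions, so that $T(f,\cdot)$ factors through a finite orbit space. The paper packages the second ingredient as Corollary~\ref{Coro: constant type in orbit}, which in turn is the special case $h=f$, $g=f$ of the general Theorem~\ref{Theo: Conjugated partitions same types}.

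The one place where your write-up diverges from the paper, and where you yourself flag a difficulty, is the handling of vertical directions. You posit that a primitive partition carries a \emph{canonical} vertical direction ``intrinsically determined by the generating first intersection point'' and then worry about its $f$-equivariance. The paper never introduces such a canonical choice and does not need one: Definition~\ref{Defi: induced geometric Markov partition} transports \emph{any} chosen geometrization of $\cR$ to a geometrization of $h(\cR)$ via $h$, and Theorem~\ref{Theo: Conjugated partitions same types} shows the resulting types agree. Thus for every geometrization of $\cR(z,n)$ there is a matching geometrization of $\cR(f(z),n)=f(\cR(z,n))$ with the same type, so the set of types realised along an orbit is constant regardless of which vertical directions are chosen. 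This removes the obstacle you anticipated and makes the argument cleaner; you should replace your appeal to a putative canonical orientation by a direct citation of Definition~\ref{Defi: induced geometric Markov partition} and Theorem~\ref{Theo: Conjugated partitions same types}.
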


Thus, the \emph{canonical} Markov partitions of $f$ are the elements of $\cM(f,n(f))$, and the corresponding \emph{canonical geometric types} form the finite set $\cT(f,n(f))$.

In \cite{IntiThesis} it was shown that two pseudo-Anosov homeomorphisms are topologically conjugate by an orientation-preserving homeomorphism if and only if they admit geometric Markov partitions of the same geometric type. We therefore obtain:

\begin{coro}\label{Coro: same total invariants}
	Two pseudo-Anosov homeomorphisms $f$ and $g$ are topologically conjugate via an orientation-preserving homeomorphism if and only if $n(f)=n(g)$ and, there exist  $n \ge n(f)=n(g)$ such that their primitive geometric types of order $n$ coincide:
	$$
	\cT(f,n)=\cT(g,n).
	$$
\end{coro}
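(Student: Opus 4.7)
The plan is to deduce both implications from results already established in this paper: Proposition~\ref{Prop: Conjugates then primitive Markov partition}, which transports primitive Markov partitions along an orientation-preserving conjugacy, and Theorem~\ref{Theo: Total invariant}, which characterizes such conjugacy by the coincidence of a single geometric type.

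For the forward direction, suppose $h\colon S_f\to S_g$ is an orientation-preserving topological conjugacy from $f$ to $g$. I would first apply Proposition~\ref{Prop: Conjugates then primitive Markov partition} to obtain a bijection between the first intersection points of $f$ and those of $g$ satisfying $n(h(z))=n(z)$. Since $n(f)$ is characterized (via Corollary~\ref{Coro: n(f) number}) as the smallest integer dominating the compatibility coefficients of all first intersection points, this forces $n(f)=n(g)$. For any $n\ge n(f)$, part (iii) of the same proposition yields $h(\cR(z,n))=\cR(h(z),n)$, so $h$ induces a bijection $\cM(f,n)\to \cM(g,n)$. Because $h$ is orientation-preserving, it preserves the vertical direction of rectangles, the horizontal order of the subrectangles $f(R_i)\cap R_j$, and the sign of the induced change of vertical orientation---precisely the data recorded by the geometric type---so $T(f,\cR)=T(g,h(\cR))$ for every $\cR\in\cM(f,n)$, and therefore $\cT(f,n)=\cT(g,n)$.

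For the converse, assume $n(f)=n(g)$ and $\cT(f,n)=\cT(g,n)$ for some $n\ge n(f)$. By Corollary~\ref{Coro: Finite orbits of primitive Markov partitions}, $\cM(f,n)$ is nonempty; pick any $\cR_f\in\cM(f,n)$ and set $T:=T(f,\cR_f)\in\cT(f,n)=\cT(g,n)$. By definition of $\cT(g,n)$, there is $\cR_g\in\cM(g,n)$ with $T(g,\cR_g)=T$. Theorem~\ref{Theo: Total invariant} then immediately produces an orientation-preserving topological conjugacy between $f$ and $g$.

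The only delicate point I foresee is the invariance $T(f,\cR)=T(g,h(\cR))$ used in the first implication: one must verify that the labels and the distinguished vertical directions on $h(\cR)$ can be chosen coherently with those on $\cR$, so that the combinatorial records on both sides agree on the nose. This is essentially automatic because $h$ is orientation-preserving and the geometric type is defined up to the standard relabeling of rectangles, but it is the bookkeeping step I would spell out in detail in a full write-up.
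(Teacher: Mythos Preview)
Your proposal is correct and mirrors the paper's own approach: the forward direction is precisely the content of Corollary~\ref{Coro: conjugated implies same cannonical types} (obtained from Corollary~\ref{Coro: n(f) conjugacy invariant} together with Theorem~\ref{Theo: Conjugated partitions same types}), and the converse follows immediately from Theorem~\ref{Theo: Total invariant}, just as you argue. The ``delicate bookkeeping'' you flag---that $T(f,\cR)=T(g,h(\cR))$ for an orientation-preserving conjugacy $h$---is exactly what Theorem~\ref{Theo: Conjugated partitions same types} proves, so you can simply cite it rather than redoing the verification.
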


\subsection{Remarks and Perspectives for Further Research}

For a saddle-type basic piece $K$ of a $C^{1}$ structurally stable surface diffeomorphism $\phi$, F.~Béguin shows in \cite[Proposition~1]{beguin2004smale} that the integer $n(\phi,K)$ (corresponding to our $n(f)$) satisfies the upper bound
$$
n(\phi,K)\le 142\, n_T^{\,2},
$$
where $T$ is the geometric type of a geometric Markov partition $\mathcal{R}$ of $K$, and $n_T$ denotes the number of rectangles in $\mathcal{R}$. Moreover, if at least one geometric type realized by a geometric Markov partition of $K$ is known, \cite[Proposition~2]{beguin2004smale} guarantees the existence of an algorithm that computes all primitive geometric types of $(\phi,K)$ of order $n$ for every $n\ge n(\phi,K)$. In particular, the family of primitive geometric types is algorithmically computable.

In \cite{IntiThesis} we developed an algorithm that decides whether two geometric types arise from the same pseudo-Anosov homeomorphism. Although effective, this procedure requires an \emph{exponential} number of steps with respect to the number of rectangles of the underlying Markov partitions, as it relies on Béguin's algorithm for saddle-type basic pieces—an algorithm that must handle situations that never occur for geometric Markov partitions of 
pseudo-Anosov homeomorphisms. This observation motivates the development of a direct and explicit algorithm which, given
$$
T \in \mathcal{T}(f,n),
$$
computes the primitive geometric types of the adjacent orders,
$$
\mathcal{T}(f,n-1) 
\qquad\text{and}\qquad 
\mathcal{T}(f,n+1).
$$
We expect that such a constructive framework will lead to a substantially more efficient algorithm for the conjugacy problem in the pseudo-Anosov setting.

\section{Geometric Markov partition and their geometric types}\label{Sec: Geo part Tipes}

We adopt the following definition, which includes the case of Anosov diffeomorphisms on the $2$-dimensional torus $\TT^2$.

\begin{defi}[\textbf{p-A} homeomorphism]\label{Defi: Homeo p-A}
	Let $S$ be a surface. An orientation-preserving homeomorphism $f: S \to S$ is a \emph{generalized pseudo-Anosov homeomorphism}, or \textbf{p-A} homeomorphism for short, if there exist $\lambda > 1$ and two $f$-invariant transverse measured foliations, which may contain spines: $(\cF^s, \mu^s)$ and $(\cF^u, \mu^u)$, such that:
	\begin{equation*}
		f_*(\mu^s) = \lambda \mu^s \quad \text{and} \quad f_*(\mu^u) = \frac{1}{\lambda} \mu^u.
	\end{equation*}
	
	We call $\lambda$ the \emph{stretch factor} of $f$, and we refer to $(\cF^s, \mu^s)$ and $(\cF^u, \mu^u)$ as the \emph{stable} and \emph{unstable foliations} of $f$, respectively. If the \emph{invariant foliations} of $f$ have no singularities, we assume that the set of singular points coincides with a finite family of periodic points of $f$.
\end{defi}

For a detailed definition of measured foliations and basic results on \textbf{p-A} homeomorphisms, the reader may refer to \cite[Sections 11 and 14]{farb2011primer} or \cite[Expositions 5, 10, and 12]{fathi2021thurston}. It is well known that every \textbf{p-A} homeomorphism is, in fact, a diffeomorphism away from its singularities, and its dynamics are uniformly hyperbolic outside of them. This provides a useful framework for visualizing our discussion.

\subsection{Geometric Markov partitions}
 We introduce definitions and basic results on geometric Markov partitions and their geometric types for \textbf{p-A} homeomorphisms. The primary reference for this subject is \cite{bonatti1998diffeomorphismes}, where the theory was developed for Smale diffeomorphisms. In particular, the authors consider Markov partitions with disjoint rectangles, which requires us to adapt their concepts to our setting.

Let $\emptyset \neq r \subset S$ be an open and connected set. For every $x \in r$, let $F_x^s \in \cF^s$ and $F_x^u \in \cF^u$ denote the stable and unstable leaves passing through $x$, respectively. Define $\overset{o}{I}_x \subset F_x^s$ as the unique connected component of $F_x^s \cap r$ containing $x$, and $\overset{o}{J}_x \subset F_x^u$ as the unique connected component of $F_x^u \cap r$ containing $x$. We introduce the following notation: $I_x := \overline{\overset{o}{I}_x}$ and $J_x := \overline{\overset{o}{J}_x}$.

\begin{defi}\label{Defi: Bifoleado}
	An open and connected set $\emptyset \neq r \subset S$ is \emph{trivially bi-foliated} by $\cF^s$ and $\cF^u$ if, for every $x, y \in r$, $I_x$ and $J_y$ are homeomorphic to closed intervals and $I_x \cap J_y = \{z(x, y)\}$, where $z(x, y) \in r$.
\end{defi}

\begin{defi}[Rectangle]\label{Defi: Rectangulo}
	A rectangle $R \subset S$ for $f$ is the closure $R = \overline{r}$ of a non-empty open and connected set $r \subset S$, which is trivially bi-foliated by $\cF^s$ and $\cF^u$.
\end{defi}

If the homeomorphism $f$ has been previously specified, for brevity, we simply say that $R$ is a rectangle instead of referring to it as a rectangle for $f$. Take a bi-foliated set $r$; it is essentially as shown in the figure below. Let $x \in r$, and consider the closed interval $I_x$ with endpoints $x_+$ and $x_-$. There are two possibilities for $x_+$ (and for $x_-$ as well): either it is the endpoint of a single interval $I_x$, or there exists $y \notin \overset{o}{I_x}$ such that $I_y$ has $x_+$ as an endpoint.  
\begin{figure}[ht]
	\centering
	\includegraphics[width=0.5\textwidth]{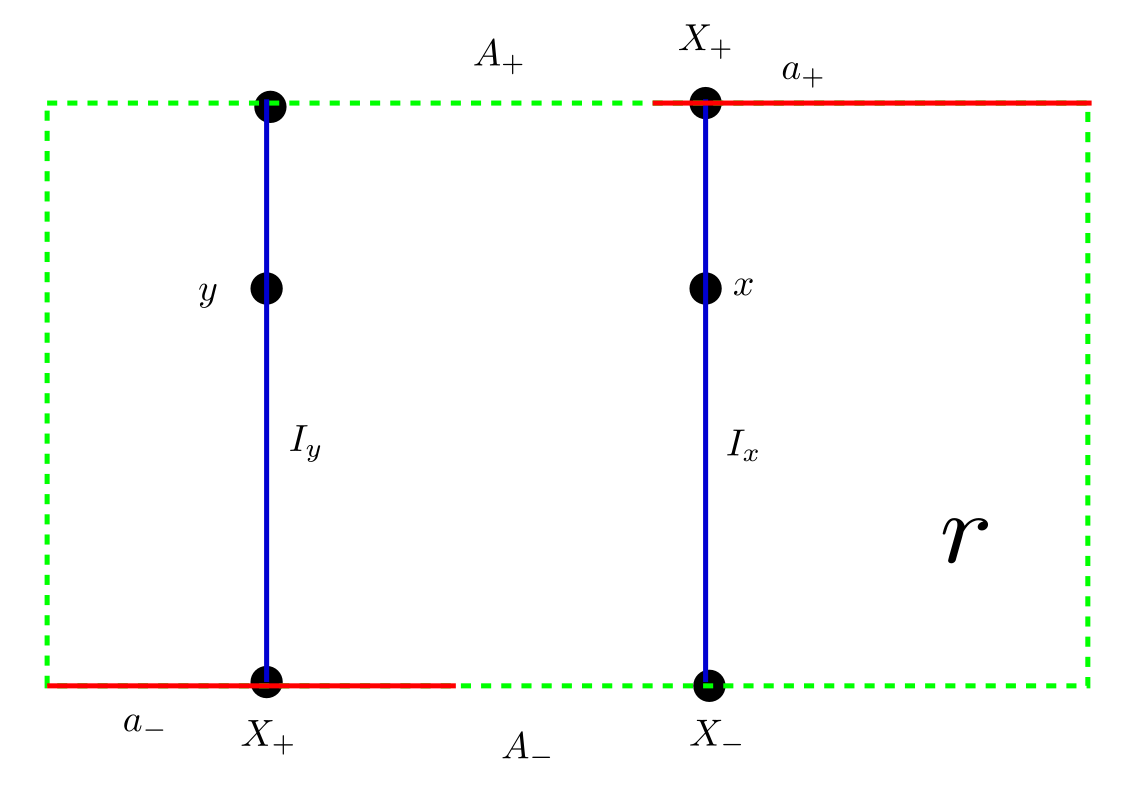}
	\caption{Boundary identification in $R=\overline{r}$}
	\label{Fig: Identify boundaries rec}
\end{figure}

Following the notation in Figure \ref{Fig: Identify boundaries rec}, the previous argument implies a certain identification of points in $A_+$ with points in $A_-$. In fact, there must be an open interval $a_+ \subset A_+$ and $a_- \subset A_-$ that are identified. However, since there is no closed leaf in the unstable foliation of $f$, the intervals $a_-$ and $a_+$ cannot be equal to the respective $A_-$ or $A_+$.  This implies that our rectangles must be isomorphic to some of the following examples:

\begin{figure}[ht]
	\centering
	\includegraphics[width=0.5\textwidth]{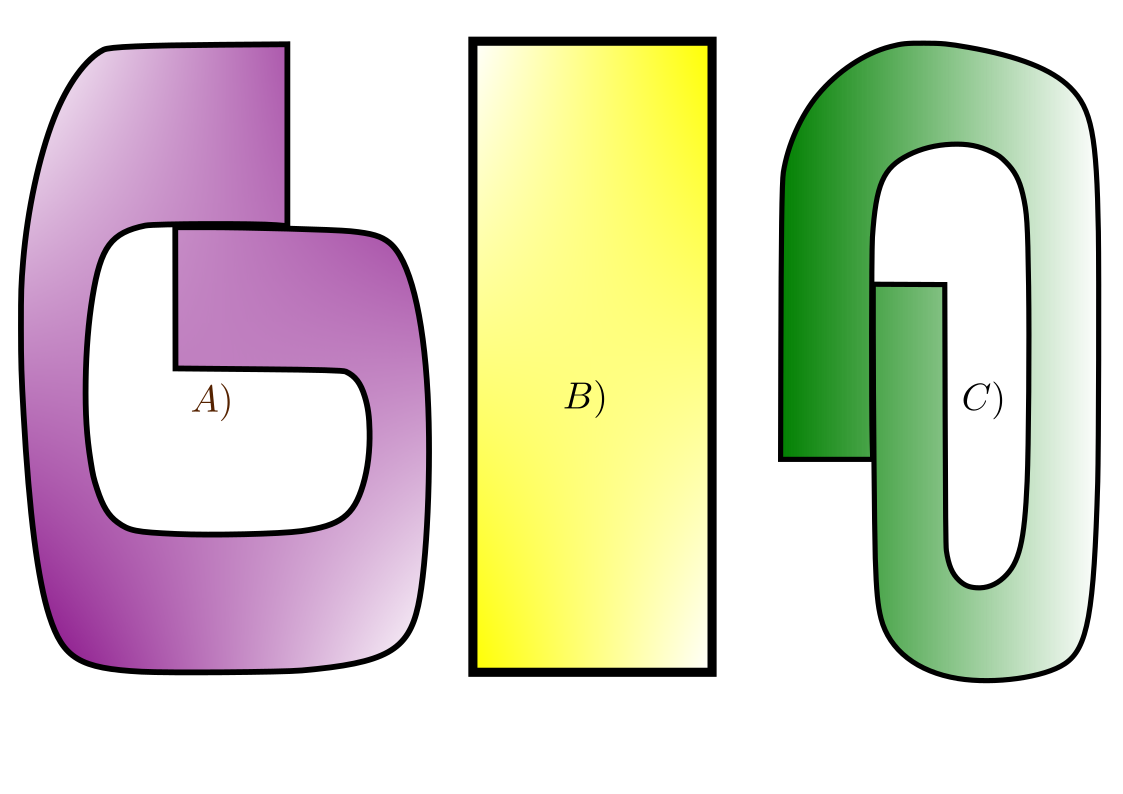}
	\caption{Rectangles $A$ and $C$ with boundary identifications; rectangle $A$ is embedded.}
	\label{Fig: Different types of rectangles}
\end{figure}

If $R$ is a rectangle, the \emph{topological interior} of $R$ as a subspace of the surface $S$ is denoted by $\text{Int}(R)$. It is not difficult to see that $\overset{o}{R} \subset \text{Int}(R)$, but $\overset{o}{R}$ will be more relevant for our discussions. Let $x \in \overset{o}{R}$. Consider the following convention: $\overset{o}{I}_x(R)$ is the only connected component of $\overset{o}{R} \cap F^s_x$ that contains $x$. The open interval $\overset{o}{J}_x(R) \subset \overset{o}{R} \cap F^u_x$ is similarly defined.

\begin{defi}[Subrectangle]\label{Defi: Sub rectangles}
	Let $R \subset S$ be a rectangle. A rectangle $H \subset R$ is a \emph{horizontal subrectangle} of $R$ if, for every $x \in \overset{o}{H}$, $\overset{o}{I}_x(H) = \overset{o}{I}_x(R)$. Similarly, a rectangle $V \subset R$ is a \emph{vertical subrectangle} of $R$ if, for every $x \in \overset{o}{V}$, $\overset{o}{J}_x(V) = \overset{o}{J}_x(R)$.
\end{defi}

We must use the orientations of the surface $S$ and the standard orientation of the Euclidean plane $\RR^2$ to induce a vertical and horizontal orientation for each rectangle on the surface. We must to introduce the notion of  parametrization of a rectangle.

\begin{lemm}\label{Lemm: Rec parametrizado}
	Let $R = \overline{r}$ be a rectangle for $f$. Then there exists a continuous immersion $\rho: [0,1] \times [0,1] \to S$ with the following properties:
	\begin{enumerate}
		\item The image of $\rho$ is the rectangle $R$, and we call  \emph{interior} of the rectangle $R$ to the set $\overset{o}{R} := \rho((0,1) \times (0,1))$.
		
		\item The restriction $\rho: (0,1) \times (0,1) \to \overset{o}{R}$ is an orientation-preserving homeomorphism.
		
		\item For every $t \in [0,1]$, the arc $I_t := \rho([0,1] \times \{t\})$ is contained in a unique stable leaf of $\mathcal{F}^s$, and the restriction of $\rho$ to $[0,1] \times \{t\}$ is a homeomorphism onto its image. We call each arc $I_t$  \emph{horizontal leaf} of $R$.
		
		\item For every $s \in [0,1]$, the arc $J_s := \rho(\{s\} \times [0,1])$ is contained in a unique unstable leaf of $\mathcal{F}^u$, and the restriction of $\rho$ to $\{s\} \times [0,1]$ is a homeomorphism onto its image. We call each arc $J_s$  \emph{vertical leaf} of $R$.
		
	\end{enumerate}
	A map such as $\rho$ is called a \emph{parametrization} of $R$.
\end{lemm}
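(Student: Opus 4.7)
The plan is to construct $\rho$ on the open square $(0,1)^2$ from the trivial bi-foliation of $r = \overset{o}{R}$, and then extend by continuity to $[0,1]^2$. Fix a base point $x_0 \in r$; by Definition~\ref{Defi: Bifoleado} the arcs $\overset{o}{I}_{x_0}$ and $\overset{o}{J}_{x_0}$ are open intervals, and I pick homeomorphisms $\alpha\colon (0,1) \to \overset{o}{I}_{x_0}$ and $\beta\colon (0,1) \to \overset{o}{J}_{x_0}$ compatible with the surface orientation. For $(s,t) \in (0,1)^2$, the bi-foliation supplies a unique point $\rho_0(s,t) \in r$ with
$$
\{\rho_0(s,t)\} \;=\; I_{\beta(t)} \cap J_{\alpha(s)}.
$$
A direct check using the bi-foliation property shows $\rho_0$ is a bijection onto $r$; continuity of $\rho_0$ and of its inverse follows from the local product structure of the transverse foliations $\cF^s$ and $\cF^u$ on $r$, and the orientation choice makes $\rho_0$ orientation-preserving, yielding (2).

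Next I extend to the closed square. The closed arcs $I_{x_0}$ and $J_{x_0}$ are compact intervals by Definition~\ref{Defi: Bifoleado}, so $\alpha$ and $\beta$ extend uniquely to continuous maps $[0,1] \to S$ whose images are $I_{x_0}$ and $J_{x_0}$. For each $t \in [0,1]$, continuity of $\cF^s$ gives that the compact arc $I_{\beta(t)}$ is the Hausdorff limit of $I_{\beta(t')}$ as $t' \to t$, and the analogous statement holds for $\cF^u$; combining these yields a continuous extension $\rho\colon [0,1]^2 \to R$. Property (1) then follows since $R = \overline{r}$ is the closure of $\rho((0,1)^2)$. For (3), fix $t \in [0,1]$: continuity gives $\rho([0,1]\times\{t\}) = I_{\beta(t)}$, which lies in a single stable leaf, and the restriction $\rho|_{[0,1]\times\{t\}}$ extends a homeomorphism $(0,1) \to \overset{o}{I}_{\beta(t)}$, so it is monotone and hence itself a homeomorphism onto $I_{\beta(t)}$. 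Property (4) follows symmetrically.

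Finally, I must verify that $\rho$ is an immersion. On $(0,1)^2$ it is already a homeomorphism, so the only obstruction lies on $\partial([0,1]^2)$, where injectivity can fail whenever two open subarcs of $\partial R$ are identified in $S$, as illustrated in Figure~\ref{Fig: Different types of rectangles}. The hard part will be proving local injectivity at the endpoints of such identified subarcs. The key input is the observation recorded before Figure~\ref{Fig: Different types of rectangles}: since no leaf of $\cF^u$ is closed, an identification between opposite sides of $[0,1]^2$ can never be total; so at each such boundary point a small foliated neighborhood is available on which $\rho$ restricts to a local embedding, by the local product structure of $\cF^s$ and $\cF^u$ together with the fact, established above, that each side of $[0,1]^2$ is mapped homeomorphically onto a leaf arc. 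Patching these local embeddings yields the desired continuous immersion.
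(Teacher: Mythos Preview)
Your proof is correct and follows the same overall strategy as the paper: construct $\rho$ first on the open square as a homeomorphism onto $r$, then extend by continuity to the closed square. The one notable difference is in how the interior map is built. The paper fixes $x\in r$ and defines the \emph{inverse} map $\phi\colon r\to\RR^2$ explicitly via the transverse measures,
\[
\phi(y)=\bigl(\pm\mu^u([x,y_1]^s),\ \pm\mu^s([x,y_2]^u)\bigr),
\]
where $y_1,y_2$ are the projections of $y$ onto $\overset{o}{I}_x$ and $\overset{o}{J}_x$; the holonomy-invariance of $\mu^s,\mu^u$ then shows $\phi(r)$ is an honest Euclidean rectangle, and $\rho$ is obtained by inverting and affinely rescaling. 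Your construction is purely topological: you pick arbitrary parametrizations $\alpha,\beta$ of the reference arcs and use the bi-foliation intersection $I_{\beta(t)}\cap J_{\alpha(s)}$ directly. Your route is more elementary in that it never invokes the measures, while the paper's route gives a canonical chart in which horizontal and vertical leaves become Euclidean segments (useful if one later wants metric control). Both are valid; the paper in fact leaves the boundary extension and the immersion verification as ``not difficult to see,'' whereas you sketch these steps in somewhat more detail.
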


\begin{proof}
	We shall to construct a homeomorphism $\rho: (0,1) \times (0,1) \to S$ such that $\rho((0,1) \times (0,1)) = r$, and satisfies the last three items in the proposition. Subsequently, we must to extend $\rho$ continuously to $[0,1] \times [0,1] \to S$ in such a way that $\rho([0,1] \times [0,1]) = \overline{D} = R$, while satisfy all the other properties to be a parametrization of $R$.
	
	Let $x \in r$ be a distinguished point, and let $y \in r$. Since $r$ is trivially bi-foliated by $\cF^s$ and $\cF^u$, there exists a unique point $y_1$ in the intersection $\overset{o}{I_x} \cup \overset{o}{J_y}$ and a unique $y_2$ in $\overset{o}{J_x} \cup \overset{o}{I_y}$.  Following the notation in Fig. \ref{Fig: Arcos para la parametrizacion}  let $\epsilon(y) = 1$ if $\overset{o}{I_y}$ intersects the separatrix $J_1$ of $x$, and $-1$ otherwise. Similarly, $\delta(y) = 1$ if $\overset{o}{J_y}$ intersects the separatrix $I_1$ of $x$, and $-1$ otherwise.  $\phi: r \to \RR^2$ which assigns to  $y\in 2$ the point:
	$$
	(\epsilon(y)\cdot\mu^u([x,y_1]^s),\delta(y)\cdot \mu^s([x,y_2]^u)) \in  \RR^2.
	$$
	
	\begin{figure}[ht]
		\centering
		\includegraphics[width=0.4\textwidth]{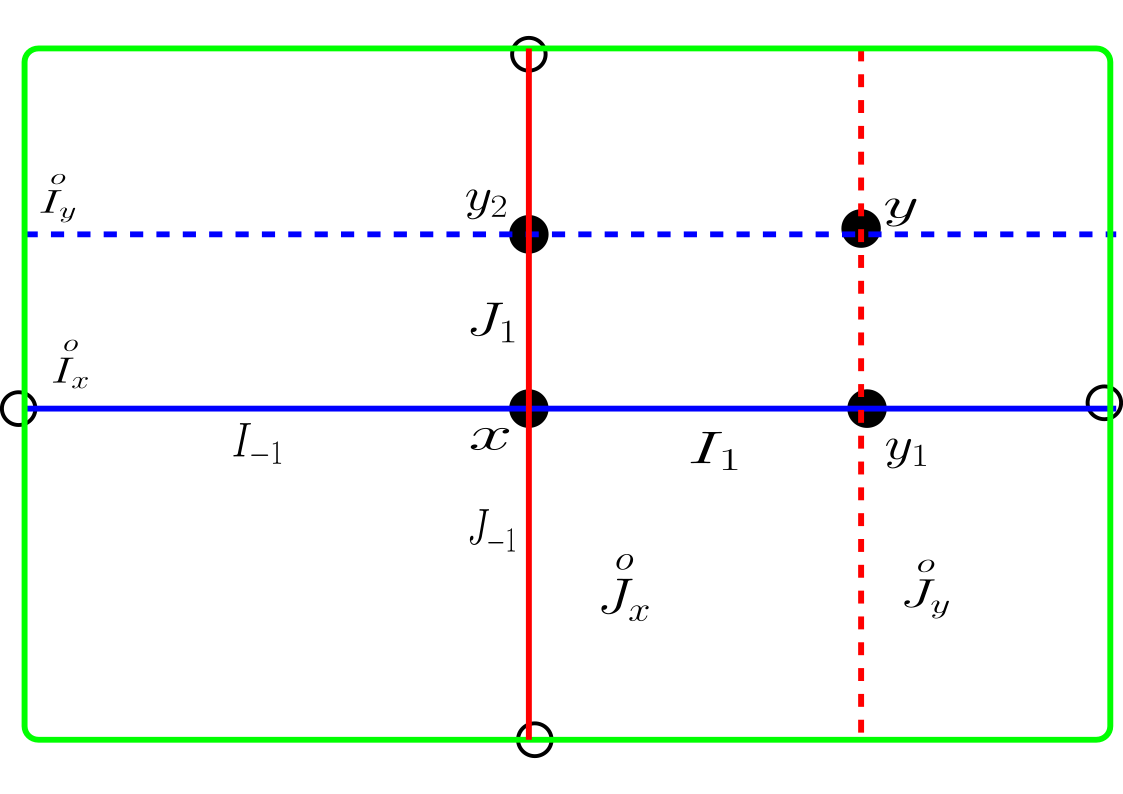}
		\caption{The arcs $[x,y_1]^s$ and $[x,y_1]^u$. }
		\label{Fig: Arcos para la parametrizacion}
	\end{figure}
	
	Since both measures are absolutely continuous with respect to Lebesgue and non-atomic, we can conclude that $\phi$ is a continuous homeomorphism, whose inverse is also continuous. Furthermore, since $r$ is trivially bi-foliated by $\cF^s$ and $\cF^u$ and the transverse measures $\mu^s$ and $\mu^u$ are invariant by isotopy on the leaves, the measures of any two arcs $I_x := \overline{\overset{o}{I_x}}$ and $I_y := \overline{\overset{o}{I_y}}$ have the same transverse measure, $\mu^u(I_x) = \mu^u(I_y)$. Similarly, the arcs $J_x := \overline{\overset{o}{J_x}}$ and $J_y := \overline{\overset{o}{J_y}}$ satisfy $\mu^s(J_x) = \mu^s(J_y)$. This means that 
	
	$$\phi(r):= (A,B) \times (C,D):=\overset{o}{H}\subset\RR^2.$$ 
	
	Now we can define $\rho' := \phi^{-1}: \overset{o}{H} \to R$ and compose it with an affine transformation that preserves or changes the orientation of $\mathbb{R}^2$ in order to obtain a function $\rho: (0,1) \times (0,1) \to S$ that satisfies items $2), 3), 4)$ of our statement.  
	It is not difficult to see that such a function can be continuously extended to $\rho: [0,1] \times [0,1] \to S$, satisfying all the required conditions. This concludes our proof.
	
\end{proof}

There is not a unique parametrization for a rectangle so we impose the following equivalence.

\begin{defi}\label{Defi: Parametrizaciones equiv}
	Let $\rho_1$ and $\rho_2$ be two parametrizations of the rectangle $R\subset S$. They are equivalent if the transition map between $\rho_1$ and $\rho_2$, restricted to the interior of the rectangle, $\overset{o}{R}$, is given by:
	\begin{equation}\label{Equa: cambio de coordenadas parametrizaciones}
		\rho_2^{-1} \circ \rho_1 := (\varphi_s, \varphi_u) : (0,1) \times (0,1) \rightarrow (0,1) \times (0,1),
	\end{equation}
	where $\varphi_s, \varphi_u: (0,1) \to (0,1)$ are increasing homeomorphisms.
\end{defi}

Since the parametrizations we consider are orientation-preserving, it is clear that there is only one equivalence class of parametrizations for a rectangle. Moreover, if  $\rho_1$ and $\rho_2$ are equivalent parametrizations of a rectangle $R$ and  the interiors of some horizontal leaves of $R$, given by  $\overline{\overset{o}{I_1}} = \rho_1((0,1) \times \{t\})$ and $\overline{\overset{o}{I_2}} = \rho_1((0,1) \times \{s\})$, intersect, then  $I_1=I_2$. This property allows us to introduce the vertical and horizontal foliations of $R$ independently of its parametrization.

\begin{defi}[Horizontal and vertical foliations]\label{Defi: Foliacion vertical y horizontal de Rec}
	Let $R$ be a rectangle, and let $\rho: [1,1]\times [0,1] \to R \subset S$ be any parametrization of $R$.  
	The \emph{horizontal foliation} of $R$ is the family of stable arcs $\cH(R) = \{I_t\}_{t \in [0,1]}$, where $I_t$ is as described in item $(3)$ of Lemma \ref{Lemm: Rec parametrizado}.  
	
	Similarly, the \emph{vertical foliation} of $R$ is the family of unstable arcs $\cV(R) = \{J_s\}_{s \in [0,1]}$, where $J_s$ is as described in item $(4)$ of Lemma \ref{Lemm: Rec parametrizado}.
\end{defi}

The parametrizations are used now to give a local orientations to our rectangles.

\begin{defi}[Geometric rectangle]\label{Defi: Geometric rec}
	Given a parametrization $\rho: [0,1] \times [0,1] \to S$ of the rectangle $R \subset S$, a \emph{vertical direction} in $R$ is defined by selecting an orientation $\xi_v$ for the vertical lines in $[0,1]\times [0,1]$ and then we use $\rho$ to induce an orientation on the vertical leaves of $R$. 
	
	A rectangle with a fixed vertical direction is called  \emph{geometric rectangle}.
\end{defi}

Fixing a vertical direction $\xi_v$ in $[0,1]\times [0,1]$, there is a unique horizontal direction $\xi_h$ in $[0,1]\times [0,1]$ such that the oriented frame $(\xi_v, \xi_h)$ coincides with the standard orientation of $\mathbb{R}^2$ (see Fig. \ref{Fig: Orientaciones verticales en II2}). This allows us to introduce the following definition.

\begin{defi}\label{Defi: Horizontal Vertical direction Rec}
	Given a parametrization $\rho:[0,1] \times [0,1] \to R \subset S$ and a vertical direction on $[0,1] \times [0,1]$, along with its corresponding horizontal direction, $\rho$ induces an orientation on the stable and unstable leaves passing through $R$. These orientations define the vertical and horizontal directions in the vertical and horizontal foliations, $\cH(R)$ and $\cV(R)$, of the rectangle $R$.
\end{defi}

\begin{defi}\label{Defi: Sub rec geometrico}
	Let $R$ be a geometrized rectangle, and let $H$ and $V$ be horizontal and vertical sub-rectangles of $R$. The \emph{canonical geometrization} of $H$ assigns it the same vertical and horizontal directions as those of $R$.
\end{defi}

\begin{figure}[h]
	\centering
	\includegraphics[width=0.4\textwidth]{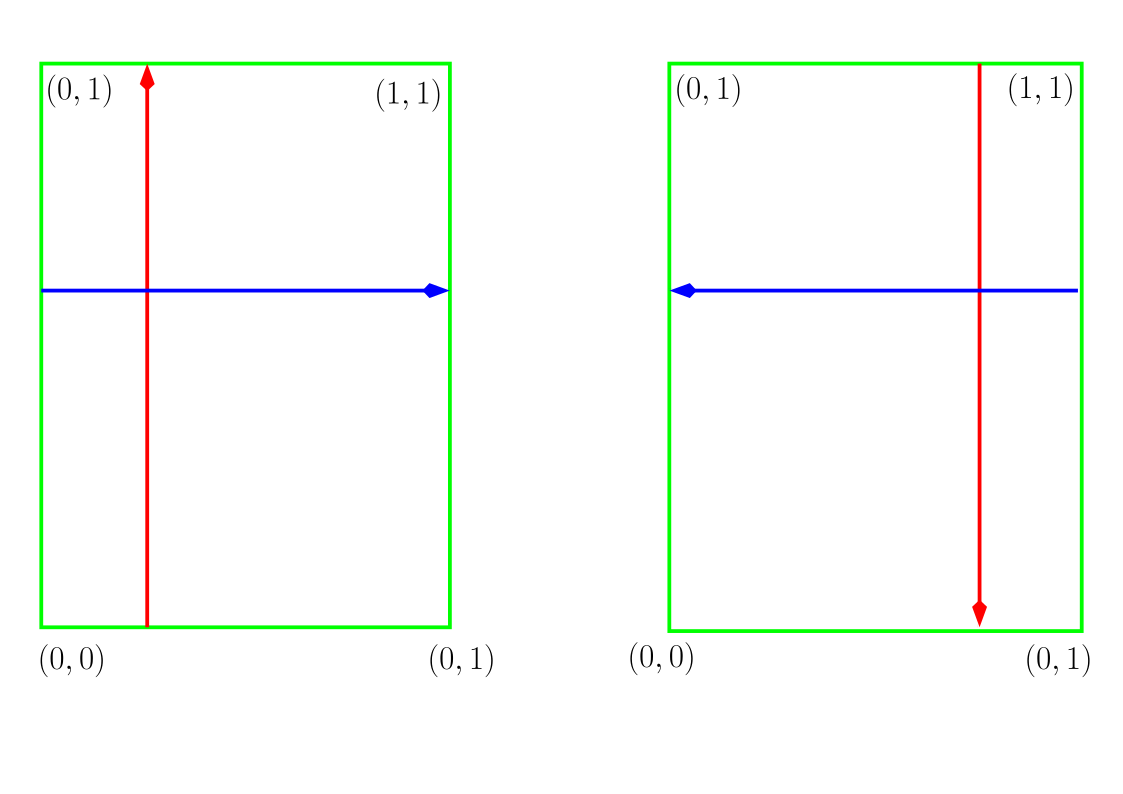}
	\caption{The vertical directions in $[0,1]\times [0,1]$}
	\label{Fig: Orientaciones verticales en II2}
\end{figure}

\begin{defi}[Geometric Markov partition]\label{Defi: Geo Markov partition}
	Let $f: S \rightarrow S$ be a \textbf{p-A} homeomorphism. A \emph{Markov partition} for $f$ is a family of labeled rectangles $\mathcal{R} = \{ R_i \}_{i=1}^n$ satisfying the following properties:
	\begin{enumerate}
		\item The surface $S$ is the union of these rectangles: $S = \cup_{i=1}^n R_i$.
		
		\item Their interiors are disjoint, i.e., for all $i \neq j$,  $\overset{o}{R_i} \cap \overset{o}{R_j} = \emptyset$.
		
		\item For every $i, j \in \{1, \dots, n\}$, the closure of each non-empty connected component of $\overset{o}{R_i} \cap f^{-1}(\overset{o}{R_j})$ is a horizontal subrectangle of $R_i$.
		
		\item For every $i, j \in \{1, \dots, n\}$, the closure of each non-empty connected component of $f(\overset{o}{R_i}) \cap \overset{o}{R_j}$ is a vertical subrectangle of $R_j$.
	\end{enumerate}
	If, in addition, every rectangle in the family $\cR$ is a geometric rectangle, we say that $\cR$ is a \emph{geometric Markov partition} of $f$. In this case, we use the notation $(f, \cR)$ to indicate that $\cR$ is a geometric Markov partition for the map $f$.
\end{defi}

\begin{figure}[h]
	\centering
	\includegraphics[width=0.5\textwidth]{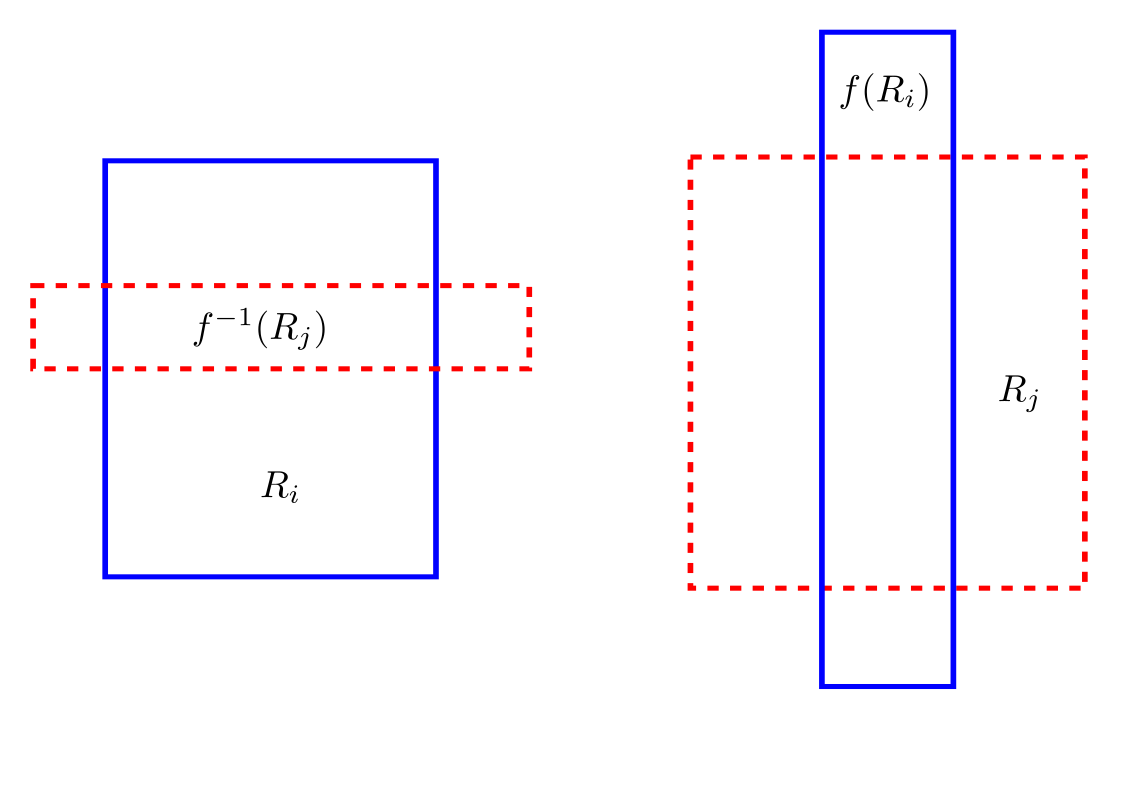}
	\caption{Item $(3)$ and $(4)$ in Definition \ref{Defi: Geo Markov partition}}
	\label{Fig: Markov properti}
\end{figure}

\begin{defi}\label{Defi: Ver Hor sub rectangle (f,R)}
	Let $f$ be a \textbf{p-A} homeomorphism and let $\cR=\{R_i\}_{i=1}^n$ be a geometric Markov partition. The set of horizontal subrectangles of $(f,\cR)$ contained in $R_i$ is
	$$
	\cH(f,\cR,i) = \{ \overline{H} : H \text{ is a connected component of } \overset{o}{R_i} \cap \cup_{j=1}^n \overset{o}{R_j} \}.
	$$ 
	The set of vertical subrectangles of $(f,\cR)$ contained in $R_j$ is given by
	$$
	\cV(f,\cR,j) = \{ \overline{V} : V \text{ is a connected component of } \overset{o}{R_j} \cap \cup_{i=1}^n f^{-1}(\overset{o}{R_i}) \}.
	$$ 
	In this manner, the sets of horizontal and vertical subrectangles of $(f,\cR)$, respectively, are given by:
	$$
	\cH(f,\cR) := \cup_{i=1}^n \cH(f,\cR,i) \, \text{ and } \, \cV(f,\cR) := \cup_{j=1}^n \cV(f,\cR,j).
	$$
\end{defi}

We must introduce some  interesting subsets inside a rectangle.

\begin{defi}\label{Defi: L-R sides}
	Let $R$ be a geometric rectangle for $f$, and let $\rho:\II^2\to R$ be a parametrization of $R$. We have the following distinguished sets:
	\begin{itemize}
		\item The \emph{left} and \emph{right} sides of $R$ are given by $\partial^u_{-1}R := \rho(\{0\} \times [0,1])$ and $\partial^u_{1}R := \rho(\{1\} \times [0,1])$, respectively. Each of these arcs is an $s$-\emph{boundary component} of $R$.
		
		\item The \emph{lower} and \emph{upper} sides of $R$ are defined as $\partial^s_{-1}R := \rho([0,1] \times \{0\})$ and $\partial^s_{+1}R := \rho([0,1] \times \{1\})$, respectively. Each of these arcs is a $u$-\emph{boundary component} of $R$.
		
		\item The \emph{horizontal boundary} of $R$ is defined as $\partial^s R := \partial^s_{-1}R \cup \partial^s_{+1}R$, and the \emph{vertical boundary} of $R$ as $\partial^u R := \partial^u_{-1}R \cup \partial^u_{+1}R$.
		
		\item The \emph{boundary} of $R$ is $\partial R := \partial^s R \cup \partial^u R$.
		
		\item The \emph{corners} of $R$ are the points given by the image of $\rho$ at $(s,t) \in \II^2$, where each corner is labeled $C_{s,t} = \rho(s,t)$.
		
		\item For all $x \in \overset{o}{R}$, $I_x$ denotes the \emph{horizontal leaf} of $\mathcal{I}(R)$ passing through $x$, and $J_x$ denotes the \emph{vertical leaf} of $\mathcal{J}(R)$ passing through $x$.
	\end{itemize}
\end{defi}

In the same spirit, we need to assign names to certain distinguished points and subsets within a geometric Markov partition that highlight their position inside a rectangle.
\begin{defi}\label{Defi: Boundary points Markov partition}
	Let $\mathcal{R} = \{R_i\}_{i=1}^n$ be a geometric Markov partition of $f$. 
	
	\begin{enumerate}
		\item The stable boundary of $(f, \mathcal{R})$ is given by $\partial^s \mathcal{R} = \cup_{i=1}^n \partial^s R_i$, and the unstable boundary by $\partial^u \mathcal{R} = \cup_{i=1}^n \partial^u R_i$.
		\item The boundary of $\mathcal{R}$ is the set $\partial \mathcal{R} = \cup_{i=1}^n \partial R_i$.
		\item The interior of the Markov partition $(f, \mathcal{R})$ is the set $\overset{o}{\mathcal{R}} = \cup_{i=1}^n \overset{o}{R_i}$.
	\end{enumerate}
	
	Let $p$ be a periodic point of $f$. In this case, $p$ is a:
	\begin{enumerate}
		\item $s$-\emph{boundary} periodic point of $(f, \mathcal{R})$ if $p \in \partial^s \mathcal{R}$. The set of $s$-boundary points is denoted by $\text{Per}^{s}(f, \mathcal{R})$.
		\item $u$-\emph{boundary} periodic point if $p \in \partial^u \mathcal{R}$. The set of $u$-boundary points is denoted by $\text{Per}^{u}(f, \mathcal{R})$.
		\item \emph{boundary} periodic point if $p \in \partial \mathcal{R}$. The set of boundary periodic points of $f$ is denoted by $\text{Per}^{b}(f, \mathcal{R})$.
		\item \emph{interior} periodic point if $p \in \overset{o}{\mathcal{R}}$. The set of interior periodic points is denoted by $\text{Per}^I(f, \mathcal{R})$.
		\item \emph{corner} periodic point if there exists $i \in \{1, \ldots, n\}$ such that $p$ is a corner point of the rectangle $R_i$. The set of corner periodic points is denoted by $\text{Per}^C(f, \mathcal{R})$.
	\end{enumerate}
\end{defi}

This allows us to introduce distinguished families of Markov partitions.

\begin{defi}\label{Defi: Partion wellsuited/corner/adapted partition}
	Let $\mathcal{R}$ be a Markov partition of the \textbf{p-A} homeomorphism $f$. We say that the Markov partition $\cR$:
	\begin{itemize}
		\item is \emph{well-suited} to $f$ if its only periodic boundary points are singularities, that is, $\textbf{Per}^b(f, \mathcal{R}) = \textbf{Sing}(f)$.
		\item has the \emph{corner property} if every rectangle $R \in \mathcal{R}$ that contains a periodic boundary point $p$ has $p$ as one of its corner points.
		\item is \emph{adapted} to $f$ if it is both well-suited and has the corner property. In other words, its only periodic boundary points are singularities, and each of them is a corner point of any rectangle that contains it.
	\end{itemize}
\end{defi}

\subsection{Geometric types} 

Let $(f, \cR)$ be a pair consisting of a \textbf{p-A} homeomorphism $f$ together with a geometric Markov partition $\cR$ of $f$. The incidence matrix (see \cite{fathi2021thurston}, \cite{bowen1975equilibrium}, \cite{robinson1999dynamical} for more detailed discussions on symbolic dynamics) of the pair is a classic combinatorial object associated with $(f, \cR)$, where the coefficient $a_{ij}$ is equal to the number of horizontal subrectangles of $R_i$ whose image under $f$ is a vertical subrectangle of $R_j$. Toward this association, mathematics has been able to apply results from symbolic dynamics to the study of pseudo-Anosov maps. However, in general, even if two \textbf{p-A} homeomorphisms have Markov partitions with the same incidence matrix, they do not need to be topologically conjugate. This forces us to introduce a new combinatorial object called the geometric type of $(f, \cR)$. 

Let $f$ be a \textbf{p-A} homeomorphism, and let $\cR = \{R_i\}_{i=1}^n$ be a geometric Markov partition. We begin by labeling the horizontal subrectangles of $(f,\cR)$ contained in $R_i$. The elements in $\cH(f,\cR,i)$ are labeled as $\{H^i_j\}_{j=1}^{h_i}$ from the \emph{bottom to the top} with respect to the vertical direction of $R_i$, where $h_i \geq 1$ denotes the number of such horizontal subrectangles.  
Similarly, the vertical subrectangles of $(f, \mathcal{R})$ contained in $R_k$ are labeled from \emph{left to right} with respect to the horizontal direction of $R_k$ as $\{V_l^k\}_{l=1}^{v_k}$, where $v_k \geq 1$ denotes the number of such vertical subrectangles.

\begin{defi}\label{Defi: VH label (f,R)}
	The set of \emph{horizontal} and \emph{vertical} labels of $(f, \mathcal{R})$ are respectively given by 
	\begin{equation*}\label{Equa: H(f,cR)}
		\mathcal{H}(f,\mathcal{R}) = \{(i,j) \mid i \in \{1, \dots, n\} \text{ and } j \in \{1, \dots, h_i\}\}.
	\end{equation*}
	and
	\begin{equation*}\label{Equa: V(f,cR)}
		\mathcal{V}(f,\mathcal{R}) = \{(k,l) \mid k \in \{1, \dots, n\} \text{ and } l \in \{1, \dots, v_k\}\}.
	\end{equation*}
\end{defi}

The homeomorphism $f$ induces a bijection between the set of horizontal and vertical rectangles of $(f, \mathcal{R})$, and thus a bijection between the set of vertical and horizontal labels of $(f, \mathcal{R})$. Consequently, the following equality holds:
$$
\alpha(f,\cR) := \sum_{i=1}^{n} h_i = \sum_{i=1}^n v_i,
$$
and we can introduce the following bijection.

\begin{defi}\label{Defi: Permutation (f,cR)}
	Let $\rho: \cH(f,\cR) \to \cV(f,\cR)$ be the map defined as $\rho(i,j) = (k,l)$ if and only if $f(H^i_j) = V^k_l$.
\end{defi}

The final step is to capture, in another map, the change in the vertical direction induced by $f$ when restricted to one horizontal sub-rectangle of the partition, as defined below.

\begin{defi}\label{Defi: Orientation (f,cR)}
	Suppose $f(H_j^i) = V_l^k$. Let $\epsilon: \mathcal{H}(f, \mathcal{R}) \rightarrow \{1, -1\}$ be the map defined as $\epsilon(i,j) = 1$ if $f$ maps the vertical direction of $H_j^i$ to the vertical direction of $V_l^k$, and $-1$ otherwise.
\end{defi}

The geometric type of $(f,\cR)$ combines all this information.

\begin{defi}[Geometric type]\label{Defi: Geometric type of (f,cR)}
	Let $f: S \to S$ be a \textbf{p-A} homeomorphism, and let $\mathcal{R}$ be a geometric Markov partition of $f$. The \emph{geometric type} of the pair $(f, \mathcal{R})$ is denoted by $T(f, \mathcal{R})$ and is given by:
	\begin{equation}
		T(f, \mathcal{R}) = (n, \{(h_i, v_i)\}_{i=1}^n, \rho, \epsilon).
	\end{equation}
	where:    
	\begin{itemize}
		\item $n$ is the number of rectangles in the Markov partition $\mathcal{R}$.
		\item $h_i$ and $v_i$ are the numbers of horizontal and vertical sub-rectangles of the pair $(f, \mathcal{R})$ contained in the rectangle $R_i \in \mathcal{R}$.
		\item The bijection $\rho$ is as given in Definition \ref{Defi: Permutation (f,cR)}.
		\item The map $\epsilon$ is as given in Definition \ref{Defi: Orientation (f,cR)}.
	\end{itemize}
\end{defi}

\section{An Algorithmic Construction of Markov Partitions}\label{Sec: Contruccion Particion Markov}

In order to construct Markov partitions, the following characterization is particularly useful and was proved in \cite{IntiThesis}

\begin{prop}\label{Prop: Markov criterion boundary}
	Let $f: S \rightarrow S$ be a \textbf{p-A} homeomorphism, and let $\mathcal{R} = \{R_i\}_{i=1}^n$ be a family of rectangles whose union is $S$ and whose interiors are disjoint. Then, $\mathcal{R}$ is a Markov partition for $f$ if and only if the following conditions hold:
	\begin{itemize}
		\item The stable boundary of $\mathcal{R}$, $\partial^s\mathcal{R} := \cup_{i=1}^n \partial^s R_i$, is $f$-invariant.
		\item The unstable boundary of $\mathcal{R}$, $\partial^u\mathcal{R} := \cup_{i=1}^n \partial^u R_i$, is $f^{-1}$-invariant.
	\end{itemize}
\end{prop}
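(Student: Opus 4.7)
The plan is a bi-directional argument. For $(\Rightarrow)$, the key observation is that the top and bottom edges of $R_i$ coincide with the top of $H^i_{h_i}$ and the bottom of $H^i_1$, the topmost and bottommost horizontal sub-rectangles of $(f,\mathcal{R})$ contained in $R_i$. Since $f(H^i_j)=V^k_l$ is a vertical sub-rectangle of some $R_k$ and $f$ preserves stable leaves, $f(\partial^s H^i_j)=\partial^s V^k_l \subset \partial^s R_k \subset \partial^s \mathcal{R}$. Taking the union over $i$ and $j\in\{1,h_i\}$ yields $f(\partial^s \mathcal{R}) \subset \partial^s \mathcal{R}$. The dual argument, applied to vertical sub-rectangles and $f^{-1}$, gives $f^{-1}(\partial^u \mathcal{R}) \subset \partial^u \mathcal{R}$.

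For $(\Leftarrow)$, I would verify conditions (3) and (4) of Definition \ref{Defi: Geo Markov partition}; by the symmetry between $f$ and $f^{-1}$ it is enough to treat (3). Fix a non-empty connected component $C$ of $\overset{o}{R_i} \cap f^{-1}(\overset{o}{R_j})$ and let $x \in C$. The crucial claim is that the open horizontal leaf $\overset{o}{I}_x(R_i)$ is entirely contained in $C$. Set $E := \overset{o}{I}_x(R_i) \cap f^{-1}(\overset{o}{R_j})$; $E$ is plainly open in $\overset{o}{I}_x(R_i)$, and I would show it is also closed by a limit argument. Suppose $y_n \in E$ with $y_n \to y \in \overset{o}{I}_x(R_i)$; then $f(y)\in R_j$, and if $f(y)\in \partial R_j$ one of two cases occurs. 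If $f(y) \in \partial^s R_j$, then $f(y)$ lies on the same stable leaf as $f(x)$, but $f(x) \in \overset{o}{R_j}$ forces that leaf to be an interior horizontal leaf of $R_j$, disjoint from the top and bottom stable arcs constituting $\partial^s R_j$; contradiction. If $f(y) \in \partial^u R_j$, then $y \in f^{-1}(\partial^u R_j) \subset f^{-1}(\partial^u \mathcal{R}) \subset \partial^u \mathcal{R}$ by hypothesis, which contradicts $y \in \overset{o}{R_i}$ since the boundary arcs of the partition do not meet the interiors. A symmetric argument using the $f$-invariance of $\partial^s \mathcal{R}$ shows that the vertical arc through $x$ meets $C$ in a connected open sub-arc whose closure terminates on $f^{-1}(\partial^s R_j)$. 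Together these two statements identify $\overline{C}$ as a rectangle whose horizontal leaves coincide with those of $R_i$ and whose horizontal (stable) boundary lies in $f^{-1}(\partial^s \mathcal{R})$, i.e., exactly a horizontal sub-rectangle of $R_i$.

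The main obstacle I expect is the innocuous-looking assertion used above that $\partial^u \mathcal{R} \cap \overset{o}{R_i} = \emptyset$ (and likewise for $\partial^s \mathcal{R}$). This combinatorial consistency is visually obvious for reasonable partitions but does not follow mechanically from the bare hypotheses that the $R_i$ cover $S$ with pairwise disjoint interiors, since a vertical arc in $\partial^u R_k$ could a priori sweep through a vertical leaf of $\overset{o}{R_i}$. The cleanest way forward is probably to isolate this as a preliminary lemma, invoking the absence of closed leaves in the stable and unstable foliations of a pseudo-Anosov map together with the local product structure of $(\mathcal{F}^s,\mathcal{F}^u)$ to rule out such pathologies. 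Once this is in hand, the clopen argument closes cleanly and both conditions (3) and (4) of Definition \ref{Defi: Geo Markov partition} follow in parallel.
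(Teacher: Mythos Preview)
Your proposal is correct and follows essentially the same approach as the paper's proof (which appears commented out in the source, with a citation to \cite{IntiThesis}). Both directions match: for $(\Rightarrow)$ you and the paper use that each $s$-boundary component of $R_i$ is an $s$-boundary component of some horizontal sub-rectangle $H$ with $f(H)=V\subset R_k$; for $(\Leftarrow)$ your clopen/limit argument is a mild repackaging of the paper's direct ``if $I_x'\ne I_x$ then an endpoint $z$ of $I_x'$ lies in $\overset{o}{R_i}$, hence $z=f^{-1}(z')$ with $z'\in\partial^u R_k$, so $z\in\partial^u\mathcal{R}\cap\overset{o}{R_i}=\emptyset$''. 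You are also right to flag the assertion $\partial^u\mathcal{R}\cap\overset{o}{R_i}=\emptyset$ as the one non-automatic step: the paper uses it without comment, so your instinct to isolate it as a preliminary lemma is an improvement in rigor rather than a divergence in strategy.
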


\subsection{Graphs adapted to a \textbf{p-A} Homeomorphism}

The following definition captures the key properties of the stable and unstable boundaries of a Markov partition that we will need in order to construct a graph that is, in fact, the boundary of a Markov partition.

\begin{defi}[Adapted graphs]\label{Defi: Adapted graph}
	Let $\delta^s=\{I_i\}_{i=1}^n$ be a family of compact stable intervals not reduced to a point. The family $\delta^s$ is an $s$-\emph{graph adapted} to $f$ if:
	\begin{itemize}
		\item  The set $\cup \delta^s := \cup_{i=1}^n I_i$ is $f$-invariant,  
		$$f(\cup \delta^s) \subset \cup \delta^s.$$
		\item For all $i\in \{1,\dots,n\}$, there exists $p\in\textbf{Sing}(f)$ such that $I_i \subset \mathcal{F}^s(p)$.
		\item For all $p\in \textbf{Sing}(f)$ and every stable separatrix of $p$, there exists some interval $I_i\in \delta^s$ that is contained in such separatrix and has $p$ as an endpoint.
		\item The endpoints of every interval $I_i\in \delta^s$ are contained in $\mathcal{F}^u(\textbf{Sing}(f))$.
	\end{itemize}
	
	A $u$-\emph{graph adapted} to $f$, $\delta^u := \{J_i\}_{i=1}^n$, is similarly defined, but with the intervals contained in unstable leaves of the singularities, and $\cup \delta^u := \cup_{i=1}^n J_i$ must be $f^{-1}$-invariant. 
\end{defi}

In the following Lemma \ref{Lemm: The generated graph is adapted}, we will construct an adapted $s$-graph starting with a family of $f^{-1}$-invariant unstable arcs.

\begin{lemm}\label{Lemm: The generated graph is adapted}
	Let $\mathcal{J} = \{J_j\}_{j=1}^n$ be a family of nontrivial compact arcs, each contained in $\mathcal{F}^u(\textbf{Sing}(f))$, such that $\cup \mathcal{J} := \cup_{j=1}^n J_j$ is $f^{-1}$-invariant. Let $\delta^s(\mathcal{J}) = \{I_i\}_{i=1}^m$ be the family of all stable segments that have one endpoint in \textbf{Sing}$(f)$ and the other in $\cup \mathcal{J}$, with their interiors disjoint from $\cup \mathcal{J}$.
	
	In this situation, the family $\delta^s(\mathcal{J})$ is an $s$-graph adapted to $f$, and we call $\delta^s(\mathcal{J})$ the \emph{$s$-graph adapted to $f$ generated by} $\mathcal{J}$.
\end{lemm}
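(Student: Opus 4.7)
The plan is to verify, one by one, the four conditions of Definition \ref{Defi: Adapted graph} for $\delta^s(\mathcal{J})$. Two of them are immediate from the construction. Each $I_i\in\delta^s(\mathcal{J})$ is by definition a stable segment starting at a singularity $p$, so $I_i\subseteq\mathcal{F}^s(p)$; and both endpoints lie in $\mathcal{F}^u(\textbf{Sing}(f))$, since $p$ is trivially in its own unstable leaf and the other endpoint lies in $\cup\mathcal{J}\subseteq\mathcal{F}^u(\textbf{Sing}(f))$. Finiteness of $\delta^s(\mathcal{J})$ is equally clear, because each of the finitely many (singularity, stable separatrix) pairs gives rise to at most one element of $\delta^s(\mathcal{J})$, namely the initial segment from the singularity to the first intersection with $\cup\mathcal{J}$. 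It then remains to establish forward $f$-invariance of $\cup\delta^s(\mathcal{J})$ and the existence of an element in every stable separatrix.

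Forward invariance I would dispose of next, since it follows directly from the $f^{-1}$-invariance of $\cup\mathcal{J}$. Take $I=[p,z]^s\in\delta^s(\mathcal{J})$, so that $p\in\textbf{Sing}(f)$, $z\in\cup\mathcal{J}$, and the open arc $(p,z)^s$ avoids $\cup\mathcal{J}$. Its image $f(I)=[f(p),f(z)]^s$ lies on the stable separatrix $f(F)$ of the singularity $f(p)$. If the interior $(f(p),f(z))^s$ met $\cup\mathcal{J}$ at some point $y$, then $f^{-1}(y)\in (p,z)^s\cap f^{-1}(\cup\mathcal{J})\subseteq (p,z)^s\cap\cup\mathcal{J}=\emptyset$, a contradiction; hence $(f(p),f(z))^s\cap\cup\mathcal{J}=\emptyset$. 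Consequently the first intersection of $f(F)$ with $\cup\mathcal{J}$ starting from $f(p)$ occurs at or beyond $f(z)$, and $f(I)$ is contained in that initial segment, which is itself an element of $\delta^s(\mathcal{J})$. This yields $f(\cup\delta^s(\mathcal{J}))\subseteq\cup\delta^s(\mathcal{J})$.

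The main obstacle is to show that every stable separatrix $F$ of every singularity $p$ meets $\cup\mathcal{J}$ at a point distinct from $p$; once this is known, taking the point of $F\cap\cup\mathcal{J}$ closest to $p$ along $F$ produces the required element of $\delta^s(\mathcal{J})$. My approach is to argue by forward iteration. The sets $f^n(\cup\mathcal{J})$ form an increasing sequence of finite unions of unstable arcs contained in $\mathcal{F}^u(\textbf{Sing}(f))$ whose total $\mu^s$-length grows like $\lambda^n$, while remaining supported on a fixed finite collection of unstable leaves (since the singularities of $f$ are periodic and $f$ permutes them). Using the density of every unstable leaf of $\mathcal{F}^u$ in $S$, together with this unbounded growth in length, I would show that for $n$ sufficiently large some arc of $f^n(\cup\mathcal{J})$ enters any prescribed tubular neighborhood $U$ of $p$ and, by transversality with $\mathcal{F}^s$, crosses $F$ inside $U$. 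Choosing $n$ to be a large multiple of the finite period $k$ for which $f^k(F)=F$ (recall that $f$ permutes the finitely many stable separatrices of the periodic singularities), we obtain $f^{-n}(F)=F$, so pulling a crossing point back by $f^{-n}$ yields a point of $F\cap\cup\mathcal{J}$ distinct from $p$. This density-and-growth argument is the technically delicate step; once it is in place, the adapted $s$-graph conditions are all verified and $\delta^s(\mathcal{J})$ is adapted to $f$.
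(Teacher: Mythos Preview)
Your proof is correct and follows essentially the same approach as the paper: the easy conditions are dispatched by construction, and your $f$-invariance argument via the $f^{-1}$-invariance of $\cup\mathcal{J}$ is the same contradiction the paper uses. The one substantive difference is that the paper simply \emph{asserts} that every stable separatrix carries an element of $\delta^s(\mathcal{J})$, while you actually supply a density-and-growth argument for it; your extra work here is legitimate and fills in something the paper leaves implicit. One minor ordering point: your invariance paragraph uses the existence of a first intersection of $f(F)$ with $\cup\mathcal{J}$, so logically it should come after (or be made conditional on) the separatrix-intersection claim you prove last.
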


\begin{proof}
	By construction, each interval in the graph $\delta^s(\mathcal{J})$ is contained in $\mathcal{F}^s(\textbf{Sing}(f))$, has one endpoint in $\textbf{Sing}(f)$ and the other in $\mathcal{J} \subset \mathcal{F}^u(\textbf{Sing}(f))$. Moreover, for every stable separatrix of any point in $\textbf{Sing}(f)$, there exists a segment in $\delta^s(\mathcal{J})$ that belongs to that separatrix. It remains to show that $\cup \delta^s(\mathcal{J})$ is $f$-invariant.
	
	Let $I := I_i$ be an interval in $\delta^s(\mathcal{J})$. Since $I$ is contained in a stable separatrix of a point $p \in \textbf{Sing}(f)$, we know that $f(I)$ is contained in a stable separatrix of $f(p)$, and within that separatrix, there exists an interval $I'$ of $\delta^s(\mathcal{J})$. If $f(I)$ is not a subset of $I'$, then $I' \subset f(I)$, which implies that $I'$ has an endpoint $x'$ in $\cup \mathcal{J}$ that belongs to the interior of $f(I)$. Since $\cup \mathcal{J}$ is $f^{-1}$-invariant, we have $f^{-1}(x') \in \cup \mathcal{J}$, and this point is in the interior of $I$, since the interior of $I$ cannot intersect $\cup \mathcal{J}$, this is a contradiction. Consequently, $f(I) \subset I' \subset \cup \mathcal{J}$, and thus $\cup \delta^s(\mathcal{J})$ is $f$-invariant.
\end{proof}

If $\mathcal{I}$ is a family of stable segments contained in $\mathcal{F}^s(\textbf{Sing}(f))$ and its union is $f$-invariant, the $u$-\emph{graph adapted to $f$ generated by} $\mathcal{I}$ is similarly defined and is denoted by $\delta^u(\mathcal{I})$.

\subsection{Compatible Graphs} Even if $\delta^s$ and $\delta^u$ are graphs adapted to $f$, their union is not necessarily the boundary of a Markov partition of $f$. We must require a more subtle relation between the adapted graphs in order for them to bound bi-foliated open discs and then take the rectangles in our partition as the closures of such discs. The relation between them is given in \ref{Defi: Compatible graphs}. Before presenting it, we need to introduce a few new concepts.

\begin{defi}\label{Defi: Regular part delta-s}
	Let $\delta^s=\{I_i\}_{i=1}^n$ be an $s$-graph adapted to $f$. The \emph{regular part} of $\delta^s$ is  $\overset{o}{\delta^s}:=\cup \delta^s \setminus \cup_{i=1}^n \{ \partial I_i \}$. The regular part of an adapted $u$-graph is similarly obtained by removing the endpoints of each interval in $\delta^u$ and taking their union.
\end{defi}

\begin{defi}\label{Defi: Rail regular/extremal}
	Let $\delta^s$ be an $s$-graph adapted to $f$.
	\begin{itemize}
		\item  A \emph{regular $u$-rail} of $\delta^s$ is a unstable interval $J$ whose interior is disjoint from $\cup \delta^s$ but has endpoints in $\overset{o}{\delta^s}$.
		
		\item An \emph{extreme $u$-rail} of $\delta^s$ is an unstable segment $J$ that has at least one endpoint not contained in $\overset{o}{\delta^s}$, and for which there exists an \emph{embedded rectangle}\footnote{This means the parametrization of $R$ is a homeomorphism in the unit square $\II^2$} $R$ such that: $\partial^s R \subset \cup \delta^s$, with $J$ being a $u$-boundary component of $R$, and such that any other vertical leaf of $R \setminus J$ is a regular $u$-rail of $\delta^s$.
		
	\end{itemize}
	
	The set of extreme $u$-rails of $\delta^s$ is denoted by \textbf{Ex}$\,^u(\delta^s)$.
\end{defi}

\begin{defi}\label{Defi: Compatible graphs}
	Let $\delta^s = \{I_i\}_{i=1}^n$ and $\delta^u = \{J_j\}_{j=1}^m$ be $s$-graphs and $u$-graphs adapted to $f$, respectively. They are \emph{compatible} if they satisfy the following properties:
	\begin{itemize}
		\item The endpoints of $\delta^s$ belong to $\cup \delta^u := \cup_{j=1}^m J_j$, and the endpoints of $\delta^u$ belong to $\cup \delta^s := \cup_{i=1}^n I_i$.
		\item The extreme $u$-rails of $\delta^s$ are contained in $\cup \delta^u$, and the extreme $s$-rails of $\delta^u$ are contained in $\cup \delta^s$.
	\end{itemize}
\end{defi}

In view of Lemma \ref{Lemm: The generated graph is adapted}, we know how to construct $s$- or $u$-graphs adapted to $f$. The next step is to determine how we can build a pair of adapted graphs.

\begin{lemm}\label{Lemm: Iteration to be adapted}
	Let $\delta^s$ and $\delta^u$ be graphs adapted to $f$. Then there exists an $n := n(\delta^s, \delta^u) \in \mathbb{N}$ such that $\delta^s$ and $h^n(\delta^u)$ are compatible whenever $m \geq n$, and $n$ is the minimum natural number with this property.
\end{lemm}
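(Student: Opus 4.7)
The plan is to treat the two compatibility conditions of Definition~\ref{Defi: Compatible graphs}—matching of endpoints and matching of extreme rails—separately, show that each of them holds once $n$ is large enough, take the maximum of the resulting thresholds, and then invoke well-ordering of $\NN$ to obtain a minimum. Throughout, I would exploit the fact that iterating by $f$ stretches unstable arcs and contracts stable ones at uniform rate $\lambda$, together with the finiteness of $\textbf{Sing}(f)$ and of their separatrices.

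First I would describe how $f^n(\delta^u)$ grows with $n$. Because $\cup\delta^u$ is $f^{-1}$-invariant, the sequence $\cup\delta^u \subset f(\cup\delta^u) \subset f^2(\cup\delta^u)\subset\cdots$ is increasing, and along each unstable separatrix of each singularity (on which $\delta^u$ contains an initial segment by Definition~\ref{Defi: Adapted graph}) the $\mu^s$-length of the corresponding arc of $f^n(\delta^u)$ is multiplied by $\lambda$ at every step. Symmetrically, the endpoints of $f^n(\delta^u)$ are $f^n$-images of endpoints of $\delta^u$; since the latter lie on stable separatrices of singularities, they are contracted toward those singularities at rate $\lambda^{-n}$.

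Next I would verify endpoint compatibility. Each endpoint of an arc of $\delta^s$ lies on some unstable separatrix $\cF^u(p)$, hence by the growth statement above is contained in $\cup f^n(\delta^u)$ for $n$ sufficiently large; since $\delta^s$ has finitely many endpoints, a single threshold $n_1$ works for all of them. Conversely, each endpoint of $f^n(\delta^u)$ sits on some stable separatrix $\cF^s(p)$ at $\mu^u$-distance $\lambda^{-n}d_0$ from $p$, where $d_0>0$ is a constant determined by $\delta^u$; because $\delta^s$ contains an arc emanating from $p$ with some fixed positive $\mu^u$-length, for $n$ past a bound $n_2$ all of these points lie in $\cup\delta^s$.

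Finally I would handle the extreme-rail condition. The set $\textbf{Ex}^u(\delta^s)$ is a fixed, finite family of compact unstable arcs on separatrices of singularities, each of finite $\mu^s$-length; by the growth statement, each is contained in $\cup f^n(\delta^u)$ once $n\ge n_3$. For the extreme $s$-rails of $f^n(\delta^u)$, one observes that for $n$ large every such rail is a short stable segment joining a singularity $p$ to the nearest endpoint of $f^n(\delta^u)$ along a stable separatrix of $p$; the contraction estimate shows these segments shrink like $\lambda^{-n}$, so for $n\ge n_4$ they all lie inside $\cup\delta^s$. Setting $n(\delta^s,\delta^u):=\max\{n_1,n_2,n_3,n_4\}$ gives compatibility for every $n\ge n(\delta^s,\delta^u)$, and the minimum such integer is well-defined. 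The main obstacle is the second half of the last step: the family of extreme $s$-rails of $f^n(\delta^u)$ varies with $n$—old rails become regular and new, shorter rails appear—so one must identify them uniformly (as the stable segments from singularities to the closest endpoint of $f^n(\delta^u)$ on each stable separatrix) and then use the finiteness of separatrices together with the uniform exponential contraction to obtain a single threshold that works for all large $n$.
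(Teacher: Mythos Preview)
Your proposal is correct and follows the same four-threshold strategy as the paper: handle the two endpoint inclusions and the two extreme-rail inclusions separately, using uniform $\lambda$-expansion of unstable arcs and $\lambda^{-1}$-contraction of stable arcs together with the finiteness of separatrices and of the relevant arc families, then take the maximum and invoke well-ordering.

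The only point worth flagging is the step you call the main obstacle. Your description of the extreme $s$-rails of $f^n(\delta^u)$ as ``segments joining a singularity $p$ to the nearest endpoint of $f^n(\delta^u)$'' is not quite accurate---such a rail has both endpoints on $\cup f^n(\delta^u)$ and need not touch the singularity---although your conclusion (they lie on stable separatrices of singularities, have $\mu^u$-length shrinking like $\lambda^{-n}$, and hence eventually fit inside the fixed initial arcs of $\delta^s$) is valid. The paper bypasses this difficulty entirely: since $f^n$ carries $\textbf{Ex}^s(\delta^u)$ bijectively onto $\textbf{Ex}^s(f^n(\delta^u))$, the desired inclusion is equivalent to $\textbf{Ex}^s(\delta^u)\subset\cup f^{-n}(\delta^s)$, where the left side is now a \emph{fixed} finite family of stable arcs on separatrices and the right side grows monotonically along every stable separatrix. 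This reduces the step to the mirror image of your $n_3$ argument (with $f^{-1}$ and $\mu^u$ in place of $f$ and $\mu^s$) and removes the need to track a family that varies with $n$.
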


\begin{proof}
	
	First, we prove that there exists $N_1 \in \mathbb{N}$ such that for all $n > N_1$, $\delta^s$ contains the extreme points of $f^n(\delta^u)$ while $f^n(\delta^u)$ contains the extreme points of $\delta^s$ too. Next, we must obtain certain $N_2 \in \mathbb{N}$ such that for all $n > N_2$, the extreme $u$-rails of $\delta^s$ are in $h^n(\delta^u)$ and the extreme $s$-rails of $h^n(\delta^u)$ are contained in $\delta^s$. These conditions imply that:
	
	$$
	\{n \in \mathbb{N} \mid \delta^s \text{ is compatible with } f^n(\delta^u)\} \neq \emptyset,
	$$
	
	so we can define $n := n(\delta^s, \delta^u)$ as the minimum of this set.
	
	Let's start by assuming that $\delta^u = \{J_j\}_{j=1}^h$ and $\delta^s = \{I_i\}_{i=1}^l$. We define:
	$$
	L^u := \max\{\mu^s(J_j) \mid 1 \leq j \leq h\}.
	$$
	It is clear that for every $n \in \mathbb{N}$ and every interval $J_j \in \delta^u$:
	$$
	\mu^s(f^n(J_j)) = \lambda^n \mu^s(J_j) \leq \lambda^n L^u.
	$$
	
	Let $z \in \delta^s$ be an extreme point of $\delta^s$. Since $\delta^s$ is an adapted graph, the point $z$ lies on the unstable leaf of a singularity of $f$. Let $[p_z, z]^u$ denote the unique unstable arc  passing through $z$ and connecting it with a singularity $p_z$ of $\cF^u$. Let's define:
	$$
	F^u := \max\{\mu^s([p_z, z]^u) \mid z \text{ is an extreme point of } \delta^s\}.
	$$
	
	By the uniform expansion and density of every unstable leaves of $\mathcal{F}^u$ in $S$, there exists $n_1 \in \mathbb{N}$ such that for every $n \geq n_1$,  $\lambda^n L^u > F^u$. Moreover, if $n > n_1$ and $z$ is an extreme point of $\delta^s$, there exists a unique $J_j$ in $\delta^u$ such that $z$ lies on the same separatrix as $f^n(J_j)$ and then:
	$$
	\mu^u(f^n(J_j)) \geq \lambda^n L^u > F^u \geq \mu^s([p_z, z]^u),
	$$
	implying that $z$ belongs to $f^n(J_j)$, or equivalently, $z$ belongs to $f^n(\delta^u)$ for all $n \geq n_1$.
	
	In the same way, but using $f^{-1}$ and the measure $\mu^u$, we can deduce the existence of $n_2 \in \mathbb{N}$ such that for all $n \geq n_2$ and every extreme point $z$ of $\delta^u$, $z$ is also an extreme point of $f^{-n}(\delta^s)$. In other words, for all $n \geq n_2$ and every extreme point $z$ of $f^n(\delta^u)$, $z$ is contained in $\delta^s$. Let's take 
	$$N_1 = \max\{n_1, n_2\}$$.
	
	Consider the set of $u$-extreme rails of $\delta^s$.  For each $p \in \textbf{Sing}(f)$ and each unstable separatrix $F^u_i(p)$ of $p$ (if $p$ is a $k$-prong, we consider $i = 1, \dots, k$), we define $J(p, i) \subset F^u_i(p)$ as the minimal compact interval containing the intersection of all the $u$-extreme rails of $\delta^s$ with the separatrix $F^u_i(p)$. In case some $u$-extreme rail $J$ of $\delta^s$ contains a singularity in its interior, we consider only the subinterval of $J$ contained in the separatrix $F^u_i(p)$. Since the  set of $u$-extreme rails of $\delta^s$ is and each $u$-extreme rail is a closed interval the following quantity  is finite:
	
	$$
	M^u := \max\{ \mu^s(J(p, i)) \mid p \in \textbf{Sing}(f) \text{ is a $k$-prong and } i = 1, \dots, k \}.
	$$
	
	Like $\delta^u = \{J_j\}_{j=1}^h$, the next quantity is finite too:
	$$
	G^u = \min\{\mu^s(J_j) \mid j = 1, \dots, h\}.
	$$
	
	By the uniform expansion in unstable leaves of $\mathcal{F}^u$, there exists $n_1 \in \mathbb{N}$ such that for all $n \geq n_1$, $\lambda^n G^u \geq M^u$. Furthermore, for any $k$-prong $p \in \textbf{Sing}(f)$ and every $i = 1, \dots, k$, there exists an interval $J_j$ in $\delta^u$ such that $f^n(J_j)$ is contained in the same unstable separatrix as $J(p, i)$. Then we have the following computation:
	
	$$
	\mu^s(f^n(J_j)) = \lambda^n \mu^s(J_j) \geq \lambda^n G^u \geq M^u.
	$$
	
	Since $\mu^s(J(p, i)) \leq M^u$, the inequality implies that $J(p, i) \subset f^n(J_j)$. By the construction of $J(p, i)$, we deduce that for each $n \geq n_1$, any $u$-extreme rail of $\delta^s$ is contained in $f^n(\delta^u)$.
	
	A similar proof using $f^{-1}$ and the measure $\mu^u$ gives another natural number $n_2 \in \mathbb{N}$ such that for all $n \geq n_2$, any extreme $s$-rail of $\delta^u$ is contained in $f^{-n}(\delta^s)$, or equivalently, all extreme $s$-rails of $f^n(\delta^u)$ are contained in $\delta^s$. Let $N_2 = \max(n_1, n_2)$, to finally take 
	$$
	N:=\max\{N_1,N_2\}.$$
	Therefore the following quantity exists:
	$$
	n(\delta^s, \delta^u) := \min \{n \in \mathbb{N} \mid \forall n \geq N, \, \delta^s \text{ is compatible with } f^n(\delta^u) \},
	$$
	
	and this is the number that we were looking for.
\end{proof}

\subsection{The existence of Markov partitions}Our previous construction of adapted and compatible graphs, along with the next proposition, implies the existence of Markov partitions.

\begin{prop}\label{Prop:Compatibles implies Markov partition}
	Let $\delta^u$ and $\delta^s$ be graphs adapted to $f$ and compatible. Then the closure in $S$ of each connected component of
	\[
	\overset{o}{S} := S \setminus \left( \bigcup \textbf{Ex}^s(\delta^u) \,\cup\, \bigcup \textbf{Ex}^u(\delta^s) \right)
	\]
	is a rectangle whose stable (resp.\ unstable) boundary is contained in $\bigcup \delta^s$ (resp.\ $\bigcup \delta^u$). Moreover, the family $\mathcal{R}(\delta^s,\delta^u)$ of these rectangles  is an adapted Markov partition for $f$.
\end{prop}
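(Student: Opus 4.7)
The plan is to verify the Markov criterion of Proposition~\ref{Prop: Markov criterion boundary} applied to the family $\cR(\delta^s,\delta^u)$. There are three things to do: (1) show that the closure of each connected component of $\overset{o}{S}$ is indeed a rectangle in the sense of Definition~\ref{Defi: Rectangulo}, (2) identify the stable/unstable boundaries of this family with $\bigcup\delta^s$ and $\bigcup\delta^u$, and (3) invoke the invariance properties coming from the fact that $\delta^s$ and $\delta^u$ are adapted.

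\textbf{Step 1 (bi-foliated structure of each component).} Let $C$ be a connected component of $\overset{o}{S}$ and set $R=\overline{C}$. Fix $x\in C$ and consider the connected components $\overset{o}{I}_x\subset F^s_x\cap C$ and $\overset{o}{J}_x\subset F^u_x\cap C$ containing $x$. Each endpoint of $\overset{o}{I}_x$ lies in $\partial C\subset \bigcup\textbf{Ex}^s(\delta^u)\cup\bigcup\textbf{Ex}^u(\delta^s)$. A stable leaf cannot exit $C$ through the interior of an extreme $s$-rail of $\delta^u$ without actually coinciding with the stable leaf supporting that rail, and in that case the endpoint of $\overset{o}{I}_x$ must be a corner (an endpoint of that rail, where an extreme $u$-rail terminates by compatibility). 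In all other cases the exit is transverse, through an extreme $u$-rail of $\delta^s$. The same analysis applies to $\overset{o}{J}_x$. Using that $C$ contains no singularities (all singular points are endpoints of arcs of $\delta^s$ and $\delta^u$ by Definition~\ref{Defi: Adapted graph}) and that $\cF^s$, $\cF^u$ have local product structure, one gets that $\overset{o}{I}_x\cap \overset{o}{J}_y$ is a single point of $C$ for all $x,y\in C$. Thus $C$ is trivially bi-foliated, so by Lemma~\ref{Lemm: Rec parametrizado} the set $R=\overline{C}$ admits a parametrization and is a rectangle. The argument also yields $\partial^s R\subset\bigcup\textbf{Ex}^s(\delta^u)$ and $\partial^u R\subset\bigcup\textbf{Ex}^u(\delta^s)$.

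\textbf{Step 2 (identification of the boundaries).} Using compatibility (Definition~\ref{Defi: Compatible graphs}) we have $\bigcup\textbf{Ex}^s(\delta^u)\subset\bigcup\delta^s$ and $\bigcup\textbf{Ex}^u(\delta^s)\subset\bigcup\delta^u$, so Step~1 gives
\[
\partial^s\cR(\delta^s,\delta^u)\subset\bigcup\delta^s,\qquad \partial^u\cR(\delta^s,\delta^u)\subset\bigcup\delta^u.
\]
For the reverse inclusions, pick a point $y\in\overset{o}{\delta^s}$; a small transverse disc around $y$ is split by the stable arc through $y$ into two half-discs lying in two (possibly equal) components of $\overset{o}{S}$, each having $y$ as a point of its stable boundary. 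Thus every point of $\bigcup\delta^s$ is in $\partial^s\cR(\delta^s,\delta^u)$, and analogously on the unstable side. Hence
\[
\partial^s\cR(\delta^s,\delta^u)=\bigcup\delta^s,\qquad \partial^u\cR(\delta^s,\delta^u)=\bigcup\delta^u.
\]
The fact that these closures cover $S$ and have pairwise disjoint interiors follows because $\overset{o}{S}$ is the complement of a finite union of compact arcs in $S$ and the rectangles are the closures of its connected components.

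\textbf{Step 3 (Markov criterion and adaptedness).} Since $\delta^s$ is an adapted $s$-graph, $\bigcup\delta^s$ is $f$-invariant; similarly $\bigcup\delta^u$ is $f^{-1}$-invariant. By Step~2 these coincide with the stable and unstable boundaries of $\cR(\delta^s,\delta^u)$, so Proposition~\ref{Prop: Markov criterion boundary} applies and $\cR(\delta^s,\delta^u)$ is a Markov partition. To see that it is adapted in the sense of Definition~\ref{Defi: Partion wellsuited/corner/adapted partition}, observe that $\partial\cR(\delta^s,\delta^u)=\bigcup\delta^s\cup\bigcup\delta^u$. By Definition~\ref{Defi: Adapted graph} the only possible periodic points on this union are the singularities (each occurring as an endpoint of some arc), and by construction each singularity is a corner of every rectangle of $\cR(\delta^s,\delta^u)$ containing it, since the two adapted graphs both emanate from it along separatrices.

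The main obstacle is Step~1: pinning down that a stable leaf never ``slides along'' a boundary arc without coming from a rail endpoint, and that the product-structure argument goes through at points of $\partial C$ where two rails meet. This is where the definitions of \emph{extreme rail} and \emph{compatibility} do the real work, ensuring that the only exits from $C$ are either transverse crossings of an unstable rail or arrivals at corners where a stable and an unstable rail meet, and ruling out pathological ``slit'' boundaries in $\bigcup\delta^s\cup\bigcup\delta^u$ that would otherwise break the rectangle property.
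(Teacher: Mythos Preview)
Your overall strategy matches the paper's: establish that each component closure is a rectangle with boundaries in the adapted graphs, then apply the boundary-invariance criterion (Proposition~\ref{Prop: Markov criterion boundary}) and read off adaptedness from the definition of adapted graphs. The paper organizes this into two preparatory lemmas---finiteness of the components (Lemma~\ref{Lemm: Finite c.c.}) and the rectangle property for each closure (Lemma~\ref{Lemm: Equivalent class rectangle})---followed by a short wrap-up identical in spirit to your Step~3.

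Where you diverge is in the argument for Step~1. Instead of analyzing how a stable or unstable leaf exits $C$ through the rails, the paper covers a curve joining any two points $x,y\in C$ by finitely many rectangular product neighborhoods contained in $C$ and reads off uniqueness of $I_x\cap J_y$ directly from the product structure within that finite cover; a second curve passing through two putative intersection points $a,b$ forces $a=b$. This sidesteps entirely the case analysis you correctly flag as the main obstacle (a leaf ``sliding along'' a boundary rail, or the behavior at corners where two rails meet), and in particular does not need to unpack the extreme-rail/compatibility definitions at this stage. Your Step~2, on the other hand, is more explicit than the paper's treatment, which simply asserts $\partial^{s,u}\cR=\delta^{s,u}$ without arguing the reverse inclusion.
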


The proof will be arrive after two consecutive lemmas.

\begin{lemm}\label{Lemm: Finite c.c.}	
	The set $\overset{o}{S}=S\setminus (\delta^s \cup \text{Ex}^u(\delta^s))$ has a finite number of connected components.
\end{lemm}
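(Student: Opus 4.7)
The plan is to reduce the statement to showing that $\textbf{Ex}^u(\delta^s)$ is a finite collection of arcs. Once that is established, $\cup\delta^s \cup \cup\textbf{Ex}^u(\delta^s)$ becomes a finite $1$-complex embedded in the compact surface $S$, and standard surface topology (existence of a regular neighborhood whose complementary components are in bijection with those of $S$ minus the graph, together with an Euler-characteristic count) immediately yields only finitely many connected components in the complement.

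The core step is therefore to bound the size of $\textbf{Ex}^u(\delta^s)$. Let $V\subset \cup\delta^s$ be the set of endpoints of the intervals $I_i\in\delta^s$; this is a finite set since $\delta^s$ is a finite family of compact arcs. Because $\delta^s$ is adapted to $f$ (Definition~\ref{Defi: Adapted graph}), the set $V$ contains every singularity of $f$ lying on $\cup\delta^s$. By Definition~\ref{Defi: Rail regular/extremal}, every extreme $u$-rail $J$ is a $u$-boundary component of an embedded rectangle $R$ with $\partial^s R\subset \cup\delta^s$, so both endpoints of $J$ lie on $\cup\delta^s$ and at least one of them lies in $V=\cup\delta^s\setminus\overset{o}{\delta^s}$. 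This yields a map $\textbf{Ex}^u(\delta^s)\to V$, and it suffices to bound its fibers.

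Fix $v\in V$. At $v$ there are only finitely many local unstable directions: exactly two if $v$ is regular for $\cF^u$, and $k$ separatrices if $v$ is a $k$-prong singularity. For each such direction, the extremality requirement (that every other vertical leaf of $R$ be a \emph{regular} $u$-rail of $\delta^s$) forces $R$ to be the maximal embedded rectangle on that side whose stable boundary lies in $\cup\delta^s$; this pins down $J$ from its outgoing direction at $v$ up to finitely many combinatorial choices determined by the (finite) local configuration of $\cup\delta^s$ near $v$. Hence only finitely many extreme rails can emanate from each $v\in V$, and $\textbf{Ex}^u(\delta^s)$ is finite.

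I expect the fiber bound in the singular case to be the main obstacle: one must rule out a sequence of pairwise distinct embedded rectangles $R_n$ having a corner at $v$, whose extreme rails $J_n$ are all distinct. The key observation that resolves this is that in a sufficiently small neighborhood of $v$, the set $\cup\delta^s$ meets $v$ along finitely many stable arcs and $v$ has only finitely many local unstable directions, so the combinatorial types of maximal embedded rectangles at $v$ with stable boundary in $\cup\delta^s$ form a finite list. Once this local finiteness is verified, the finiteness of $\textbf{Ex}^u(\delta^s)$ and hence of the connected components of $\overset{o}{S}$ follows as outlined.
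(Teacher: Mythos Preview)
Your proposal is correct and follows essentially the same approach as the paper: both argue that $\textbf{Ex}^u(\delta^s)$ is finite because each extreme rail has an endpoint among the finitely many endpoints of $\delta^s$, with only finitely many local unstable directions (and adjacent stable sectors) at each such point, and then conclude that the complement of a finite union of compact arcs in the compact surface $S$ has finitely many connected components. Your version supplies more detail---the explicit fiber-counting map $\textbf{Ex}^u(\delta^s)\to V$ and the $1$-complex/Euler-characteristic justification for the topological step---where the paper's proof is a brief sketch.
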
  

\begin{proof}
	Since there are only a finite number of endpoints in $\delta^s$ and $\delta^u$, and a finite number of stable and unstable separatrices passing through each endpoint, there is a finite number of (compact) extreme rails of $\delta^s$ and $\delta^u$. Finally, since $\delta^s \cup \text{Ex}^u(\delta^s)$ consists of a finite number of compact intervals, its complement must have a finite number of connected components.
\end{proof}

\begin{lemm}\label{Lemm: Equivalent class rectangle}
	Let $r$ be a connected component of $\overset{o}{S}$. Then $R:=\overline{r}$ is a rectangle adapted to $f$, whose stable and unstable boundaries are contained in $\cup\delta^s$ and $\cup\delta^u$, respectively.
\end{lemm}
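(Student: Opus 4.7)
My plan is to verify, in turn: (a) that $r$ is trivially bi-foliated by $\mathcal{F}^s$ and $\mathcal{F}^u$, so that $R=\overline{r}$ is a rectangle in the sense of Definition~\ref{Defi: Rectangulo}; (b) that $\partial^sR\subset\cup\delta^s$ and $\partial^uR\subset\cup\delta^u$; and (c) that $R$ fits into an adapted structure, in particular that any singularity lying on $\partial R$ is a corner of $R$.

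For (a), I fix $x\in r$ and trace the stable leaf $F^s_x$ in both directions. Since $r$ is open, strictly smaller than $S$, and since every stable leaf of a pseudo-Anosov map is dense in $S$, the leaf must exit $r$. The exit points lie in $S\setminus\overset{o}{S}=\bigcup\textbf{Ex}^s(\delta^u)\cup\bigcup\textbf{Ex}^u(\delta^s)$. An exit via an arc of $\bigcup\textbf{Ex}^u(\delta^s)$ (unstable, transverse to $F^s_x$) yields an endpoint on $\cup\delta^u$ by compatibility; an exit via an arc of $\bigcup\textbf{Ex}^s(\delta^u)$ (stable) must be an overlap, and the exit point is then an endpoint of that extreme $s$-rail along $F^s_x$, which again lies in $\cup\delta^u$ by the adapted graph hypothesis together with compatibility. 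In both cases $\overset{o}{I}_x$ has closure with two endpoints on $\cup\delta^u$; symmetrically $\overset{o}{J}_x$ has endpoints on $\cup\delta^s$. For the global product property, I combine the local product structure of $(\mathcal{F}^s,\mathcal{F}^u)$ away from singularities with the absence of closed leaves of either foliation and the connectedness of $r$: stable holonomy along unstable leaves is globally well defined on $r$, producing a bijection $\overset{o}{I}_x\times\overset{o}{J}_x\to r$ and identifying $r$ with a trivially bi-foliated open disc.

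Once (a) is in hand, (b) follows by passing to closures in a parametrization $\rho\colon[0,1]^2\to R$ provided by Lemma~\ref{Lemm: Rec parametrizado}: the horizontal sides are limits of horizontal arcs $I_y\subset r$ along an unstable leaf as $y\to\partial r$, and the stable part of $\partial r$ is contained in $\bigcup\textbf{Ex}^s(\delta^u)\subset\cup\delta^s$ by compatibility, whence $\partial^sR\subset\cup\delta^s$; the vertical case is identical. For (c), a corner of $R$ is a common endpoint of a stable side and an unstable side, hence of an interval in $\delta^s$ and one in $\delta^u$; by items (3)--(4) of Definition~\ref{Defi: Adapted graph}, the singularities of $f$ already appear among the endpoints of $\delta^s$ and $\delta^u$, and by the local $k$-prong structure a singularity lying on $\partial R$ sits at the junction of a stable and an unstable boundary side, i.e., at a corner.

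The principal obstacle is (a): showing that $r$ is globally trivially bi-foliated, rather than merely locally so. One must rule out, for instance, that $r$ is an annulus with nontrivial monodromy along $\mathcal{F}^u$, or that two distinct horizontal arcs of $r$ correspond to the same stable leaf re-entering $r$. The absence of closed leaves of $\mathcal{F}^s$ and $\mathcal{F}^u$ (a consequence of pseudo-Anosov dynamics), together with the finiteness of connected components given by Lemma~\ref{Lemm: Finite c.c.} and a careful local analysis near singularities that can lie on $\partial r$, are the key ingredients forcing the desired global product structure.
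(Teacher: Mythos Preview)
Your proof is correct in outline, and the crucial step (showing $r$ is trivially bi-foliated) rests on the same ingredients as the paper's, but the organization differs. The paper argues entirely inside $r$: it covers a path between arbitrary $x,y\in r$ by finitely many local product (``flow-box'') charts and concludes directly that $I_x\cap J_y$ is a single point, appealing to a second such covering through any two putative intersection points to force them to coincide; the inclusion $\partial^{s,u}R\subset\cup\delta^{s,u}$ is then read off from $\partial R\subset S\setminus\overset{o}{S}$ at the end. You instead first trace each leaf segment out to $\partial r$, pin down the exit points on $\cup\delta^u$ (resp.\ $\cup\delta^s$), and only then invoke global holonomy to identify $r$ with $\overset{o}{I}_x\times\overset{o}{J}_x$. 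Your route has the advantage that part~(b) falls out immediately, and your part~(c) goes beyond what the paper proves in this lemma (the corner/adapted property is deferred there to the short argument in the proof of Proposition~\ref{Prop:Compatibles implies Markov partition}). One minor correction: that an endpoint of an extreme $s$-rail of $\delta^u$ lies on $\cup\delta^u$ follows directly from the rectangle in Definition~\ref{Defi: Rail regular/extremal} (its $u$-boundary, hence its corners, lies in $\cup\delta^u$), rather than from the adapted-graph axioms you cite. Both your holonomy step and the paper's chain-of-boxes step are left at a comparable level of informality, and you rightly flag this as the principal obstacle and name the correct ingredients.
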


\begin{proof}
	It is clear that $r$ is open, connected, its closure is compact, and it does not contain any singularity in its interior, so, for all $x\in r$ the connect components of  $r\cap F^s_x$ and  $r\cap F^u_x$  are homeomorphism to opens intervals.
	For every $x\in r$ there are vertical rectangular open neighborhood $V_x$ as in figure \ref{Fig: Equiva Rectangles}, and all of them  covers $r$. 
	
	\begin{figure}[ht]
		\centering
		\includegraphics[width=0.5\textwidth]{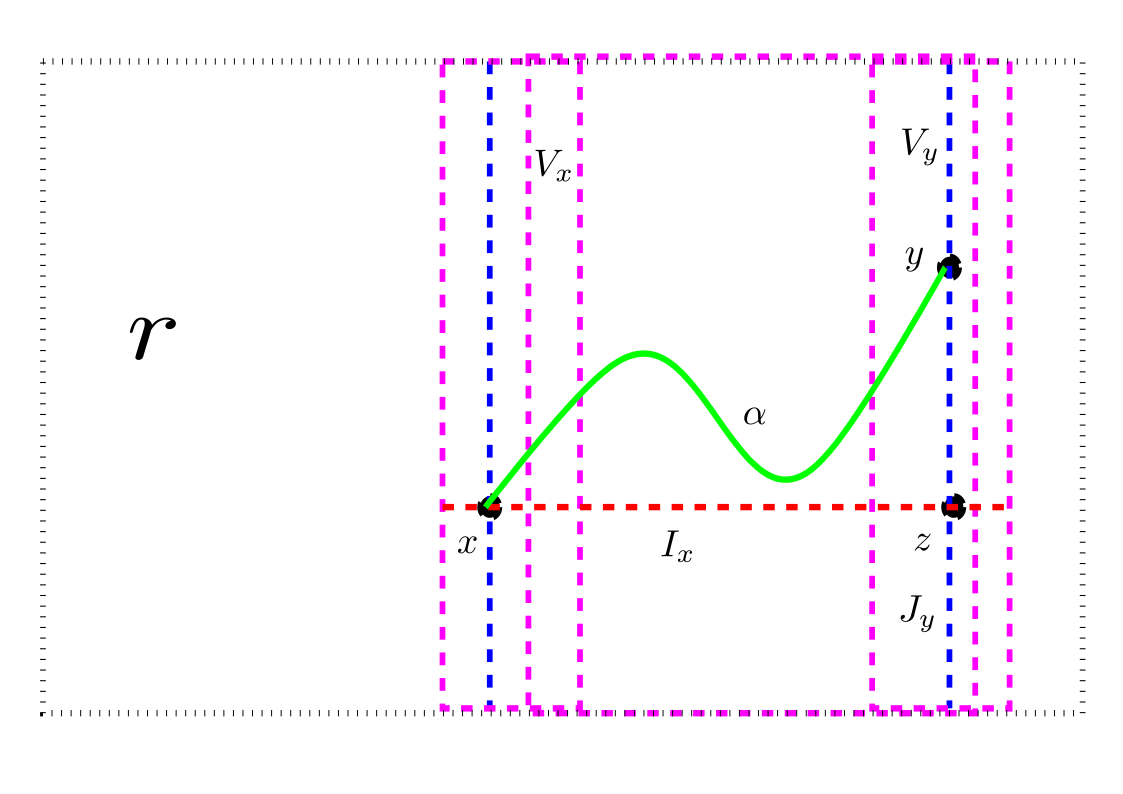}
		\caption{ Vertical rectangular neighborhoods }
		\label{Fig: Equiva Rectangles}
	\end{figure}

	Take two point $x,y\in r$ and draw a curve $\alpha \subset r$ that joint. Since $\alpha$ is compact there is a finite number of rectangular neighborhoods $H_1, H_2,\cdots, H_n$  that covers $\alpha$ and in particular it union contains $x$ and $y$. Clearly in the union $\cup_{i=1}^n H_i$ the arcs $I_x$ and $J_y$ intersect just one time. But  $I_x$ and $J_y$  have more than one intersection lets to said $a$ and $b$, we repite our game and take a curve that $\beta\subset r$ that joint $a,b,x$ and$y$. This will produce a cover iof the curve by rectangular neighborhoods inse that thatre is contained $a$ and $b$ but in this new family the segments $I_x$ and $J_y$ must intersect one time, so $a$ mut tyo be equal to $ b$. Therefore $I_x$ and $J_y$  intersect in a single point isede $r$.
	
	By Definition \ref{Defi: Rectangulo}, $R := \overline{r}$ is a rectangle. Furthermore, its stable and unstable boundaries must lie in the adapted $s$ and $u$ graphs, as they must be contained in $S \setminus \overset{o}{S}$.
\end{proof}
Now we are ready to prove Proposition \ref{Prop:Compatibles implies Markov partition}.

\begin{proof}
	In view of Lemma \ref{Lemm: Finite c.c.} and Lemma \ref{Lemm: Equivalent class rectangle}, the family $\cR$ is a finite collection of rectangles, as their interiors correspond to different connected components of $\overset{o}{S}$, and their interiors are disjoint. At the same time, since $\partial^{s,u} \cR = \delta^{s,u}$, the stable boundary of $\cR$ is $f$-invariant, and its unstable boundary is $f^{-1}$-invariant. By Proposition \ref{Prop: Markov criterion boundary}, $\cR$ is a Markov partition of $f$.
	The fact that $\delta^s$ and $\delta^u$ are adapted and compatible implies directly that $\cR$ is an adapted Markov partition.
\end{proof}

Now, we will summarize the steps required to construct a Markov partition, all of which have been previously discussed.

\begin{cons}\label{Cons: Recipe for Markov partitions}
	Let $p$ and $q$ be singular points of $f$, with separatrices $F^s(p)$ and $F^u(q)$ respectively, and let $z$ be a point in their intersection $F^s(p)\cap F^u(q)$. Consider the following construction:
	\begin{enumerate}
		\item Define $J^u(z)$ as the unstable interval $[p,z]^u$ in $F^u(p)$. We call it \emph{primitive segment}.
		
		\item Define $\cJ^u(z)=\cup_{i\in \NN}f^{-i}(J^u(z))$. Due to contraction in the unstable foliation, $\mathcal{J}^u(z)$ is a finite union of closed intervals and is $f^{-1}$-invariant.
		
		\item The graph $\mathcal{J}^u(z)$ satisfies the conditions of Lemma \ref{Lemm: The generated graph is adapted} and thus its generated $s$-graph is adapted to $f$. We denote it as $\delta^s(z)$.
		
		\item The graph $\delta^s(z)$ is $f$-invariant, and by Lemma \ref{Lemm: The generated graph is adapted} (applied to $f^{-1}$), its generated $u$-graph, $\delta^u(z)$, is adapted to $f$.
		
		\item By Lemma \ref{Lemm: Iteration to be adapted}, there exists a number $n(z) = n(\delta^s(z), \delta^u(z))$, called the \emph{compatibility coefficient} of $z$, such that, for all $n > n(z)$, $\delta^s(z)$ and $f^n(\delta^u(z))$ are compatible.
		
		\item Finally, for all $n\geq n(z)$, Proposition \ref{Prop:Compatibles implies Markov partition} implies the existence of an adapted Markov partition $\mathcal{R}(z,n)$ with a stable boundary equal to $\cup \delta^s(z)$ and an unstable boundary equal to $\cup f^n(\delta^u(z))$.
	\end{enumerate}
\end{cons}

Therefore, we recover the classical result of the existence of a Markov partition, along with a bit more: the only periodic points of our Markov partition are singularities, and each singularity is located at a corner of the rectangle that contains it.

\begin{coro}\label{Coro: Existence adapted Markov partitions}
	Every generalized \textbf{p-A} homeomorphism has adapted Markov partitions.
\end{coro}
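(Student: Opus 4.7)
The plan is to show that the input hypothesis of Construction~\ref{Cons: Recipe for Markov partitions} is always available, after which the construction itself delivers the desired partition with no further work. Concretely, we must exhibit a point $z \in F^s(p) \cap F^u(q)$ for some singularities $p, q \in \textbf{Sing}(f)$, and then invoke the construction with any $n \geq n(z)$.

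First I would verify that $\textbf{Sing}(f) \neq \emptyset$. In the genuinely singular case, the Euler--Poincaré formula applied to the invariant foliations of a closed orientable surface forces the existence of prong-type singularities whenever the genus is at least two; for the torus, Definition~\ref{Defi: Homeo p-A} stipulates that a nonempty finite set of periodic points is distinguished as the singular set, so in both situations we can pick $p \in \textbf{Sing}(f)$. Next I would appeal to the classical density property of pseudo-Anosov foliations: every stable and every unstable separatrix is dense in $S$ (this follows from the uniform expansion/contraction by the stretch factor $\lambda$ together with the absence of closed leaves, and is standard in all the references cited in the introduction, e.g.\ \cite{fathi2021thurston,farb2011primer}). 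Consequently the stable separatrices of $p$ meet every unstable separatrix of $S$; in particular they intersect an unstable separatrix $F^u(q)$ of some $q \in \textbf{Sing}(f)$ (one may even take $q = p$). Select any point $z$ in $F^s(p) \cap F^u(q)$.

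Having produced $z$, I would feed it into Construction~\ref{Cons: Recipe for Markov partitions}. The construction successively builds $J^u(z)$, the $f^{-1}$-orbit $\mathcal{J}^u(z)$, the adapted $s$-graph $\delta^s(z)$ (by Lemma~\ref{Lemm: The generated graph is adapted}), the adapted $u$-graph $\delta^u(z)$ (by the $f^{-1}$-version of the same lemma), and the compatibility coefficient $n(z)$ furnished by Lemma~\ref{Lemm: Iteration to be adapted}. For any $n \geq n(z)$, Proposition~\ref{Prop:Compatibles implies Markov partition} then ensures that the pair $(\delta^s(z), f^n(\delta^u(z)))$ bounds an adapted Markov partition $\mathcal{R}(z,n)$, which is exactly the object whose existence is being claimed.

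There is essentially no hard step in the argument, since the technical content has already been packaged into the lemmas of this section and into the construction itself. The only point requiring a moment of care is the joint density statement used to find $z$, and even this is classical. Thus the proof reduces to a single sentence: pick $p, q \in \textbf{Sing}(f)$, pick $z \in F^s(p)\cap F^u(q)$ (nonempty by density of the separatrices), pick $n \geq n(z)$, and set $\mathcal{R} := \mathcal{R}(z,n)$.
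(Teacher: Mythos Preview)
Your proposal is correct and follows essentially the same approach as the paper: the corollary is stated as an immediate consequence of Construction~\ref{Cons: Recipe for Markov partitions}, and you simply make explicit the (implicit) verification that an admissible input point $z \in F^s(p)\cap F^u(q)$ always exists. The paper does not spell out the density argument at this stage (it later proves the sharper Lemma~\ref{Lemm: Existencia puntos de primera intersecion} on first intersection points), but your justification is accurate and your reduction to the construction is exactly what is intended.
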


\section{Primitive Markov partitions and their geometric types}\label{Sec: Particiones primitivas}

There exists an infinite number of Markov partitions, whether adapted or not, for a \textbf{p-A} homeomorphism $f$. It is important, however, to identify a distinguished class among them.   In this section, we will apply Construction \ref{Cons: Recipe for Markov partitions} to a family of distinguished points known as \emph{first intersection points}.

This approach allows us to define a natural number $n(f) \in \mathbb{N}$, which depends only on the conjugacy class of our homeomorphism $f$. For all $n \geq n(f)$, we can then construct a family $\mathcal{M}(f, n)$ of geometric Markov partitions, called primitive geometric Markov partitions. The importance of these Markov partitions lies in the fact that the set of geometric types of the geometric Markov partitions in $\mathcal{M}(f, n)$ is finite.

This class of problems has been approached by I. Agol \cite{agol2011ideal}, who was able to construct a distinguished family of ideal triangulations for mapping tori arising from pseudo-Anosov homeomorphisms. We wish to explore the relationships between Agol's construction and our primitive Markov partitions, which will be addressed in a future article.

\subsection{First intersection points}

\begin{defi}\label{Defi: first intersection points}	
	Let $f$ be a \textbf{p-A} homeomorphism, $p, q \in \textbf{Sing}(f)$, $F^s(p)$ be a stable separatrix of $p$, and $F^u(q)$ be an unstable separatrix of $q$.
	A point $x \in [\mathcal{F}^s(p) \cap \mathcal{F}^u(q)] \setminus \textbf{Sing}(f)$ is a \emph{first intersection point} of $f$ if the stable interval $[p,x]^s \subset F^s(p)$ and the unstable segment $[q,x]^u \subset F^u(q)$ have disjoint interiors. In other words:
	$$
	(p,x]^s\cap (q,x]^u=\{x\}.
	$$
\end{defi}

\begin{lemm}\label{Lemm: Existencia puntos de primera intersecion}
	There exists at least one first intersection point for $f$
\end{lemm}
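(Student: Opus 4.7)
The plan is to start with any non-singular intersection of a stable separatrix with an unstable separatrix, and then refine it to the point closest to the base singularity $p$ along the stable arc, which will automatically satisfy the first-intersection condition. Concretely, I would first fix singularities $p, q \in \textbf{Sing}(f)$ together with a stable separatrix $F^s(p)$ and an unstable separatrix $F^u(q)$, and invoke the density of stable separatrices of pseudo-Anosov homeomorphisms (a consequence of minimality of $\cF^s$) to conclude that $F^s(p) \cap F^u(q)$ is infinite; discarding the finite set $\textbf{Sing}(f)$ gives some $x_0 \in [F^s(p) \cap F^u(q)] \setminus \textbf{Sing}(f)$. Density also rules out saddle connections, so $F^s(p)$ contains no singularity other than $p$, a fact to be used at the end to certify that the candidate point is non-singular.

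Second, I would parametrize the compact stable arc $[p, x_0]^s$ by the transverse measure $\mu^u$ starting at $p$, and study the closed subset
$$
A := [p, x_0]^s \cap F^u(q).
$$
Define $x_1 \in A$ as the point realizing the minimal strictly positive parameter value. When $p \neq q$ the set $A$ lies in $(p, x_0]^s$ and its minimum is attained by compactness. When $p = q$, the standard local model at a $k$-prong singularity forces $F^s(p) \setminus \{p\}$ to be disjoint from $F^u(p)$ in some neighborhood of $p$, so $A \setminus \{p\}$ is bounded away from $p$ and the minimum is again attained.

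Third, I would verify the first-intersection property. By minimality of the parameter at $x_1$, the open subarc $(p, x_1)^s$ is disjoint from $F^u(q)$, hence in particular from $(q, x_1)^u \subset F^u(q)$; since $x_1$ is not contained in either of these open subarcs, one obtains
$$
(p, x_1]^s \cap (q, x_1]^u = \{x_1\},
$$
which matches Definition \ref{Defi: first intersection points}. The no-saddle-connection remark further guarantees $x_1 \notin \textbf{Sing}(f)$. The main subtlety, and essentially the only one, is the degenerate case $p = q$: the base singularity itself sits inside $A$, so one must rule out accumulation of intersection points at $p$. This is precisely what the local prong chart provides, and once that local picture is in hand the rest of the argument is routine.
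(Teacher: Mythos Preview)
Your argument has a genuine gap: the set $A = [p, x_0]^s \cap F^u(q)$ is \emph{not} closed, and in fact it is dense in $[p, x_0]^s$. The unstable separatrix $F^u(q)$ is dense in $S$; whenever it enters a small rectangular flow box around a sub-arc $I'\subset [p, x_0]^s$, it must cross $I'$ transversally, so $A$ meets every such sub-arc. Consequently $A$ accumulates at every point of $[p, x_0]^s$, including $p$, and no minimal strictly positive parameter exists. This breaks both of your cases. When $p\neq q$, the appeal to ``compactness'' is unfounded: $A$ is the intersection of a compact arc with a non-closed leaf and is not itself compact. When $p=q$, the local $k$-prong model only separates the \emph{local germs} of the stable and unstable separatrices of $p$; it says nothing about the dense returns of the full leaf $F^u(p)$ to a neighborhood of $p$, and those returns do intersect $(p,x_0]^s$ arbitrarily close to $p$.

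The repair is exactly the device the paper uses: intersect with a \emph{compact} unstable arc rather than with the full separatrix. If you replace $A$ by $[p, x_0]^s \cap [q, x_0]^u$, you are intersecting two compact arcs that are transverse away from $\textbf{Sing}(f)$, so the intersection is finite; your choice of the point closest to $p$ along the stable arc then works, and the verification $(p,x_1]^s\cap(q,x_1]^u=\{x_1\}$ goes through since $(q,x_1]^u\subset [q,x_0]^u$. The paper carries out the symmetric version, taking compact arcs $I\subset F^s(p)$ and $J\subset F^u(q)$ and selecting the intersection point closest to $q$ along $J$. So your overall strategy is sound, but the object you minimize over must first be cut down to a finite set by making both arcs compact.
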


\begin{proof}
	Let $I$ be a compact interval contained in $F^s(p)$ with one endpoint at $p$. Consider a singularity $q$ (which could be equal to $p$) of $f$. Since any unstable separatrix $F^u(q)$ of $q$ is dense in the surface, there exists a closed interval $J \subset F^u(q)$ such that $\overset{o}{J} \cap \overset{o}{I} \neq \emptyset$.  
	
	Since $J$ and $I$ are compact sets, their intersection $J \cap I$ consists of a finite number of points, $\{z_0, \dots, z_n\}$. We can assume that $n > 1$, and we orient the interval $J$ pointing towards $q$. With this orientation, we have $z_i < z_{i+1}$. In this manner:
	\begin{itemize}
		\item If $p = q$, then $z_0 = p = q$, but $p \neq z_1$, and we take $z = z_1$.
		\item If $p \neq q$, then $p \neq z_0 \neq q$, and we take $z = z_0$.
	\end{itemize}
	
	Clearly, $(p,z]^s \cap (q,z]^u = \{z\}$, and $z$ is a first intersection point.
\end{proof}

\begin{lemm} \label{Lemm: Image of first intersection is first intersection}
	If $z$ is a first intersection point of $f$, then $f(z)$ is also a first intersection point of $f$.
\end{lemm}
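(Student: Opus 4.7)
[Proof plan for Lemma~\ref{Lemm: Image of first intersection is first intersection}]
The plan is to transport the defining property of a first intersection point across $f$ using the equivariance of $f$ with respect to the stable and unstable foliations. Concretely, I would first verify that the candidate point $f(z)$ lies in the intersection of an appropriate pair of separatrices of singularities, and then show by a preimage argument that no other point lies in the relevant stable and unstable subarcs attached to $f(z)$.

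First, let $p,q\in\textbf{Sing}(f)$ and separatrices $F^s(p)$, $F^u(q)$ be such that $z\in F^s(p)\cap F^u(q)$ and $(p,z]^s\cap (q,z]^u=\{z\}$. Since $f$ is a \textbf{p-A} homeomorphism, it permutes $\textbf{Sing}(f)$, so $f(p),f(q)\in\textbf{Sing}(f)$; moreover $f(z)\notin\textbf{Sing}(f)$ because $z\notin\textbf{Sing}(f)$ and $f$ preserves $\textbf{Sing}(f)$. Because $f$ sends stable (resp.\ unstable) leaves to stable (resp.\ unstable) leaves and sends separatrices of $p$ (resp.\ $q$) to separatrices of $f(p)$ (resp.\ $f(q)$), we have $f(F^s(p))=F^s(f(p))$ and $f(F^u(q))=F^u(f(q))$, hence $f(z)\in F^s(f(p))\cap F^u(f(q))$. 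Also, mapping endpoints to endpoints along the one-dimensional arcs gives
\[
f\bigl([p,z]^s\bigr)=[f(p),f(z)]^s
\qquad\text{and}\qquad
f\bigl([q,z]^u\bigr)=[f(q),f(z)]^u.
\]

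For the key step, suppose $w\in (f(p),f(z)]^s\cap (f(q),f(z)]^u$. Applying $f^{-1}$, which is a homeomorphism sending the stable (resp.\ unstable) arc $[f(p),f(z)]^s$ (resp.\ $[f(q),f(z)]^u$) bijectively onto $[p,z]^s$ (resp.\ $[q,z]^u$), we obtain
\[
f^{-1}(w)\in (p,z]^s\cap (q,z]^u=\{z\},
\]
by the hypothesis that $z$ is a first intersection point. Hence $w=f(z)$, which shows $(f(p),f(z)]^s\cap (f(q),f(z)]^u=\{f(z)\}$, completing the verification that $f(z)$ satisfies Definition~\ref{Defi: first intersection points}.

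There is no substantive obstacle: the argument is a direct transport of the defining condition by the homeomorphism $f$, together with the basic equivariance $f(\mathcal F^{s,u})=\mathcal F^{s,u}$ and $f(\textbf{Sing}(f))=\textbf{Sing}(f)$. The only point worth stating carefully is that $f$ restricts to a bijection between the relevant half-open arcs, which is immediate since it is a homeomorphism mapping endpoints to endpoints.
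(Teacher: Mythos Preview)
Your proof is correct and follows essentially the same approach as the paper: both transport the defining equality $(p,z]^s\cap(q,z]^u=\{z\}$ through $f$ using the equivariance of $f$ with respect to the foliations and the singular set. The paper's version is simply terser, applying $f$ directly to the set equality, whereas you spell out the preimage argument and the bookkeeping about separatrices and endpoints.
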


\begin{proof}
	If $z$ is a first intersection point, then there exist singular points $p$ and $q$ of $f$ such that:
	$$
	\{z\} = (p,z]^s \cap (q,z]^u.
	$$
	Therefore, applying $f$, we obtain:
	$$
	\{f(z)\} = (f(p),f(z)]^s \cap (f(q),f(z)]^u,
	$$
	which implies that $f(z)$ is also a first intersection point of $f$.
\end{proof}

The following proposition justifies our attention on first intersection points.

\begin{prop}\label{Prop: Finite number of first intersection points}
	Let $f$ be a \textbf{p-A} homeomorphism. Then $f$ has a finite number of orbits of first intersection points.
\end{prop}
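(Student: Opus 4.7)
My plan is to fix a pair of separatrices $(F^s(p), F^u(q))$, reduce the orbit question to a fundamental domain on $F^s(p)$, and bound the size of $[q,z]^u$ for first intersection points $z$ with controlled $F^s(p)$-coordinate. Since $f$ has finitely many singularities, there are only finitely many pairs $(F^s(p), F^u(q))$ of a stable separatrix of $p$ and an unstable separatrix of $q$. By Lemma \ref{Lemm: Image of first intersection is first intersection}, $f$ permutes these pairs, sending a first intersection point on $(F^s(p), F^u(q))$ to one on $(F^s(f(p)), F^u(f(q)))$. Choosing $K\in\NN$ with $f^K$ fixing each pair and each separatrix in it, it suffices to show that, for every such pair, first intersection points form finitely many $f^K$-orbits.

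Parametrize $F^s(p)\setminus\{p\}$ by $t(z):=\mu^u([p,z]^s)$ and $F^u(q)\setminus\{q\}$ by $s(z):=\mu^s([q,z]^u)$. Because $f_*\mu^u=\lambda^{-1}\mu^u$ and $f_*\mu^s=\lambda\mu^s$, the action of $f^K$ reads $t\mapsto\lambda^{-K}t$ and $s\mapsto\lambda^K s$. Fixing $t_0>0$, the set $D^s:=\{z\in F^s(p):\lambda^{-K}t_0\le t(z)<t_0\}$ is a fundamental domain for the $f^K$-action on $F^s(p)\setminus\{p\}$, so every first intersection orbit on our pair admits a representative with $t(z)\in D^s$. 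I replace $z$ by such a representative.

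The central step is to show that, for these representatives, $s(z)$ is bounded above by a constant $S^\ast$ depending only on $t_0$ and $K$. I would argue by contradiction using the following uniform crossing property: there exists $L>0$ such that every unstable arc of $\mu^s$-measure $>L$ meets, transversally in its interior, every stable arc of $\mu^u$-measure $\ge\lambda^{-K}t_0$. This is a consequence of the minimality of $\mathcal{F}^u$ combined with the local product structure provided by any Markov partition (Corollary \ref{Coro: Existence adapted Markov partitions}); concretely, iterating a long unstable arc past the primitivity threshold of the incidence matrix forces it to fully cross every rectangle and hence any stable arc of uniformly positive length. Granting this, if $s(z)>L$ then $[q,z]^u$ meets $(p,z)^s$ at an interior point $w$; the absence of saddle connections for a \textbf{p-A} homeomorphism forces $w\neq q$ (note $p\notin(p,z)^s$ even in the degenerate case $q=p$), so $w\in(q,z)^u$, contradicting $(p,z]^s\cap(q,z]^u=\{z\}$. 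Hence $s(z)\le L =: S^\ast$.

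The first intersection representatives now lie in the intersection of the compact stable arc $\overline{D^s}\subset F^s(p)$ with the compact unstable arc $\{z\in F^u(q):s(z)\le S^\ast\}\subset F^u(q)$; away from the singularities $p$ and $q$, the foliations $\mathcal{F}^s$ and $\mathcal{F}^u$ are transverse, so this intersection is a finite set. Summing over the finitely many $f$-orbits of pairs of separatrices yields the proposition. The main obstacle I anticipate is the uniform crossing statement invoked in the third paragraph: although it is a well-known consequence of the primitivity of the transition matrix of a Markov partition, its quantitative formulation in the generalized pseudo-Anosov setting (allowing $1$-prong singularities) requires careful bookkeeping and is the only step that is not essentially formal.
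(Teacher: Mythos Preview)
Your argument is correct, and its skeleton coincides with the paper's: reduce to a fixed pair of separatrices, pass to an iterate $g=f^K$ fixing them, take a fundamental domain on $F^s(p)$, bound the unstable coordinate of first intersection representatives there, and conclude by finiteness of a compact--compact transverse intersection.

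The genuine difference is in how the unstable bound is obtained. You choose an arbitrary $t_0$ and then appeal to a uniform crossing property (long unstable arcs must cut any stable arc of length $\ge\lambda^{-K}t_0$), which you correctly identify as the only nontrivial step and justify via the existence of a Markov partition and primitivity of its incidence matrix. The paper instead makes a clever \emph{choice of fundamental domain}: it takes the reference point $z_0$ to lie in $F^s(p)\cap F^u(q)$ itself, so that the boundary point $g(z_0)$ of the fundamental domain already sits on $F^u(q)$. Then for any first intersection representative $g^k(z)\in(g(z_0),z_0]^s$, the alternative $g^k(z)\in(g(z_0),\infty)^u$ would place $g(z_0)$ in both $(p,g^k(z)]^s$ and $[q,g^k(z)]^u$, contradicting the first--intersection property directly. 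Hence $g^k(z)\in(g(z_0),z_0]^s\cap[q,g(z_0)]^u$, an intersection of two compact arcs, and one is done. This avoids Markov partitions, primitivity, and the quantitative crossing lemma entirely; the bound on $s(z)$ is supplied for free by the very point $z_0$. Your route is more robust (it would work even without a convenient $z_0$ on both separatrices) but buys that robustness at the cost of the machinery you flagged as the main obstacle.
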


Since there is only a finite number of singularities and separatrices of $f$, Proposition \ref{Prop: Finite number of first intersection points} follows directly from the next lemma.

\begin{lemm}\label{lemm: finite intersect in fundamental domain}
	Let $p$ and $q$ be singularities of $f$, and let $F^s(p)$ and $F^u(q)$ be stable and unstable separatrices of these points. There exists a finite number of orbits of first intersection points contained in $F^s(p)\cap F^u(q)$
\end{lemm}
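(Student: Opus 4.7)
The plan is to reduce the problem to the iterate $f^N$, where $N$ is a common period of $p$ and $q$, so that $f^N$ fixes both points and therefore preserves the pair $(F^s(p),F^u(q))$ and the set of first intersection points it contains (by Lemma~\ref{Lemm: Image of first intersection is first intersection}). Each $f$-orbit splits into at most $N$ $f^N$-orbits, so it suffices to prove that the number of $f^N$-orbits of first intersection points in $F^s(p)\cap F^u(q)$ is finite. I parametrize each regular $z\in F^s(p)\cap F^u(q)$ by the pair $(s(z),u(z)):=(\mu^u([p,z]^s),\mu^s([q,z]^u))\in(0,\infty)^2$. The invariance relations $f_*\mu^u=\lambda^{-1}\mu^u$ and $f_*\mu^s=\lambda\mu^s$ make $f^N$ act on these coordinates as the linear map $(s,u)\mapsto(\lambda^{-N}s,\lambda^N u)$, and unwinding Definition~\ref{Defi: first intersection points} identifies the first intersection points with the Pareto minima of the intersection set $\phi(X):=\{(s(z),u(z))\}\subset(0,\infty)^2$---those $(s,u)$ such that no other intersection has strictly smaller $s$ \emph{and} strictly smaller $u$. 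I choose a compact fundamental arc $D\subset F^s(p)$ for the $f^N$-action, say the stable arc with parameters $s\in[1,\lambda^N]$; every $f^N$-orbit meets $D$, so the task reduces to proving that $D$ contains only finitely many first intersection points.

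I then argue by contradiction. Suppose there are infinitely many first intersection points $z_i$ in $D$. Since Pareto minima are pairwise incomparable, the set of first intersection points is totally ordered by $s$ (and oppositely by $u$); passing to a subsequence I may assume $s_i$ strictly monotone with limit $s_\infty\in[1,\lambda^N]$ and $u_i$ strictly monotone in the opposite sense with limit $u_\infty\in(0,\infty]$. If $u_\infty<\infty$, continuity of the immersions $F^s(p)$ and $F^u(q)$ forces $z_i$ to converge in $S$ to the single point $z^{\ast}=F^s(p)(s_\infty)=F^u(q)(u_\infty)\in F^s(p)\cap F^u(q)$; at the regular point $z^{\ast}$, transversality of $\mathcal F^s$ and $\mathcal F^u$ yields a flow box in which the local branches of $F^s(p)$ and $F^u(q)$ through $z^{\ast}$ meet only at $z^{\ast}$, forcing $z_i=z^{\ast}$ for all large $i$, a contradiction. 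If $u_\infty=\infty$, I use density of $F^u(q)$ in $S$ together with the local product structure of the two foliations to produce a full unstable passage of $F^u(q)$ through a flow box around a regular point of the nondegenerate stable arc $F^s(p)(0,s_\infty)$; this passage meets the arc at some $w\in F^s(p)\cap F^u(q)$ with stable parameter $s_w<s_\infty$ and finite unstable parameter $u_w$. Since $u_i\to\infty$ and $s_i>s_\infty>s_w$, for all large $i$ one has $s_w<s_i$ and $u_w<u_i$; hence $w$ Pareto-dominates $z_i$, contradicting the first-intersection property of $z_i$.

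The main obstacle I expect is the case $u_\infty=\infty$, where density of $F^u(q)$ by itself does not supply the dominating intersection $w$, and one must invoke the local product structure of the measured foliations to promote a point of density into a genuine transverse crossing of the arc $F^s(p)(0,s_\infty)$. The other subcase and the monotone-subsequence extraction are routine compactness-plus-transversality. One accessory point, handled by shrinking $D$ slightly to avoid the finitely many singularities of $f$ that may lie on $F^s(p)$, is to ensure that $z^{\ast}$ and $w$ are regular so that the local transversality arguments apply without modification.
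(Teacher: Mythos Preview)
Your proof is correct and rests on the same key observation as the paper's---a first intersection point cannot be Pareto-dominated (in your $(s,u)$ coordinates) by any other point of $F^s(p)\cap F^u(q)$---but the executions diverge. The paper chooses an \emph{arbitrary} intersection point $z_0\in F^s(p)\cap F^u(q)$, takes $(g(z_0),z_0]^s$ as fundamental domain, and argues in one stroke that any first intersection point $g^k(z)$ landing there must also lie in $(q,g(z_0)]^u$, since otherwise the already-known intersection $g(z_0)$ would dominate it; hence every $g$-orbit meets the finite set $[g(z_0),z_0]^s\cap[q,g(z_0)]^u$. Your route is a contradiction argument via a monotone subsequence with a two-case split on the $u$-limit: in Case~2 you manufacture a dominating intersection $w$ from density of $F^u(q)$ and the local product structure, where the paper simply uses the ready-made dominator $g(z_0)$; and your Case~1 (the flow-box transversality step) is not needed in the paper's argument at all, since once the $u$-coordinate is bounded, finiteness is immediate from the transverse intersection of two compact arcs. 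Your Pareto-minimum framework is conceptually pleasant, but the paper's path is shorter and avoids the density/product-structure machinery. One small imprecision to fix: taking $N$ a common period of $p$ and $q$ need not make $f^N$ preserve the individual separatrices $F^s(p)$ and $F^u(q)$ (it may permute prongs); you should take $N$ large enough to fix the separatrices themselves, as the paper does.
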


\begin{proof}
	Let $n\in \NN$ be the smallest natural number such that $g:=f^n$ fixes the separatrices $F^s(p)$ and $F^u(q)$. 	Take a point $z_0 \in F^s(p)\cap F^u(q)$, which may or may not be a first intersection point of $f$. The interval $(g(z_0),z_0]^s \subset F^s(p)$ is a fundamental domain for $g$ and it contains a fundamental domain for $f$. Therefore we need to show that within this interval $(g(z_0),z_0]^s$ there exists a finite number of first intersection points.
	
	Consider a first intersection point $z$ in $F^s(p) \cap F^u(q)$. There exists an integer $k \in \mathbb{Z}$ such that $g^k(z)\in (g(z_0),z_0]^s \cap F^u(q)$. Hence there are two possible configurations:
	
	\begin{enumerate}
		\item Either $g^k(z)\in (g(z_0),z_0]^s \cap (q,g(z_0)]^u$, or
		\item $g^k(z)\in (g(z_0),z_0]^s \cap (g(z_0),\infty)^u$.
	\end{enumerate}
	
	We claim that option $(2)$ is not possible. If such a configuration existed, it would imply that $g(z_0) \in (p,g^k(z)]^s \cap [q,g^k(z)]^u$, meaning that $g^k(z)$ would not be a first intersection point. However, Lemma \ref{Lemm: Image of first intersection is first intersection} implies that $f^{kn}(z)$ is always a first intersection point, leading to a contradiction.
	
	Hence, we conclude that $g^k(z) \in (g(z_0),z_0]^s \cap [q,g(z_0)]^u \subset [g(z_0),z_0]^s \cap [q,g(z_0)]^u$, which is the intersection of two compact intervals. Therefore, this intersection is a finite set.
	
	This implies that every first intersection point $z\in F^s(p) \cap F^u(q)$ has some iteration lying in a finite set. Consequently, there are only a finite number of orbits of first intersection points in $F^s(p)\cap F^u(q)$.
\end{proof}

\subsection{Primitive geometric Markov partitions} Before continuing, let us recall that two homeomorphisms $f: S_f \to S_f$ and $g: S_g \to S_g$ are topologically conjugate if there exists a homeomorphism $h: S_f \to S_g$ such that $f = h^{-1} \circ g \circ h$.

\begin{defi}\label{Defi: Primitive Markov partitions}
	Let $z$ be a first intersection point of $f$, and let $n(z)$ be the compatibility coefficient of $z$. Then, for all $n \geq n(z)$, a  Markov partition $\mathcal{R}(z,n)$ constructed as indicated in \ref{Cons: Recipe for Markov partitions} is called  \emph{primitive Markov partition} of $f$ generated by $z$ of order $n$.
\end{defi}

\begin{prop}\label{Prop: Conjugates then primitive Markov partition}
	Let $f: S_f \rightarrow S_f$ and $g: S_g \rightarrow S_g$ be two \textbf{p-A} homeomorphisms that are topologically conjugate via a homeomorphism $h: S_f \rightarrow S_g$, i.e.\ $g = h \circ f \circ h^{-1}$. Let $z \in S_f$ be a first intersection point of $f$. Then:
	\begin{itemize}
		\item[i)] $h(z)$ is a first intersection point of $g$.
		\item[ii)] $h(\cJ^u(z)) = \cJ^u(h(z))$.
		\item[iii)] $h(\delta^s(z)) = \delta^s(h(z))$ and $h(\delta^u(z)) = \delta^u(h(z))$.
		\item[iv)] The graph $\delta^s(z)$ is compatible with $f^n(\delta^u(z))$ if and only if $\delta^s(h(z))$ is compatible with $g^n(\delta^u(h(z)))$.
		\item[v)] The compatibility coefficients of $z$ and $h(z)$ coincide: $n(z)=n(h(z))$.
		\item[vi)] A class $r$ of $u$-rails for $\delta^s(z)$ corresponds under $h$ to a class of $u$-rails for $\delta^s(h(z))$, and likewise for classes of $s$-rails of $f^n(\delta^u(z))$ and $g^n(\delta^u(h(z)))$.
		\item[vii)] For every $n \ge n(z)=n(h(z))$, the image $h(\cR(z,n)) = \cR(h(z),n)$ is a primitive Markov partition of $g$ generated by $h(z)$ and of order $n$.
	\end{itemize}
\end{prop}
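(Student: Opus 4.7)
The plan is to establish the seven items in order, relying throughout on one structural input: the conjugacy $h$ preserves every dynamically intrinsic structure attached to the homeomorphisms. The first step I would take is to record this input explicitly. Since the invariant measured foliations of a \textbf{p-A} homeomorphism are uniquely characterized by the dynamics (stable leaves as sets of forward-asymptotic orbits, unstable leaves as backward-asymptotic ones), and singularities as the loci where the foliations are not locally a product, the relation $g = h \circ f \circ h^{-1}$ forces
\[
h(\mathcal{F}^s_f) = \mathcal{F}^s_g, \quad h(\mathcal{F}^u_f) = \mathcal{F}^u_g, \quad h(\textbf{Sing}(f)) = \textbf{Sing}(g),
\]
and $h$ sends each stable (resp.\ unstable) separatrix of a singularity $p$ of $f$ to a stable (resp.\ unstable) separatrix of $h(p)$. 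All subsequent items reduce to unwinding definitions under this transport.

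Given this, item (i) is immediate: applying $h$ to $(p,z]^s \cap (q,z]^u = \{z\}$ yields $(h(p), h(z)]^s \cap (h(q), h(z)]^u = \{h(z)\}$, so $h(z)$ is a first intersection point of $g$. For (ii), the primitive segment transports as $h(J^u(z)) = [h(p), h(z)]^u = J^u(h(z))$, and since $h \circ f^{-i} = g^{-i} \circ h$, applying $h$ termwise to the union defining $\mathcal{J}^u(z)$ gives $h(\mathcal{J}^u(z)) = \mathcal{J}^u(h(z))$. For (iii), I would recall that $\delta^s(z)$ is the family of stable segments with one endpoint in $\textbf{Sing}(f)$ and the other in $\mathcal{J}^u(z)$, with interiors disjoint from $\mathcal{J}^u(z)$; each of these conditions is preserved by $h$, so $h$ maps this family bijectively to the analogous family $\delta^s(h(z))$. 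The identity $h(\delta^u(z)) = \delta^u(h(z))$ follows symmetrically, using that $\delta^u(z)$ is generated (via Lemma~\ref{Lemm: The generated graph is adapted} applied to $f^{-1}$) from $\delta^s(z)$, which is already known to transport correctly.

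For (iv) and (vi), I would observe that both the notion of a regular or extreme rail (Definition~\ref{Defi: Rail regular/extremal}) and the definition of compatibility (Definition~\ref{Defi: Compatible graphs}) are phrased entirely in terms of stable/unstable arcs, endpoint containment, and the existence of embedded rectangles with boundaries in the given graphs. All these notions commute with $h$, and because $h \circ f^n = g^n \circ h$ we can swap $f^n(\delta^u(z))$ with $g^n(\delta^u(h(z)))$ under the map. Therefore the bijection between rails and the equivalence of compatibility statements follow at once. Item (v) is then a formal consequence: the set of $n \in \mathbb{N}$ for which the compatibility of (iv) holds is the same on both sides, so the minima $n(z)$ and $n(h(z))$ coincide. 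Finally, for (vii), Proposition~\ref{Prop:Compatibles implies Markov partition} expresses $\mathcal{R}(z,n)$ as the family of closures of the connected components of
\[
S_f \setminus \bigl( \textstyle\bigcup \textbf{Ex}^s(f^n(\delta^u(z))) \,\cup\, \bigcup \textbf{Ex}^u(\delta^s(z)) \bigr),
\]
and since $h$ is a homeomorphism sending the two removed sets to their counterparts for $h(z)$ of order $n$, it carries connected components to connected components and commutes with closure in $S$, yielding $h(\mathcal{R}(z,n)) = \mathcal{R}(h(z), n)$.

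The only part that requires real care, and is therefore the main obstacle, is the initial structural claim that $h$ conjugates the invariant foliations and maps singularities to singularities. Although this is classical for pseudo-Anosov homeomorphisms, making it explicit (either by citing a version adapted to generalized \textbf{p-A} maps with spines, or by a short direct argument using that stable and unstable leaves are dynamically characterizable) is essential because every subsequent bullet depends on it. Once this preservation is in hand, the rest of the proof is a systematic verification that each object entering Construction~\ref{Cons: Recipe for Markov partitions} is defined by conditions stable under conjugacy, so the construction itself is equivariant under $h$.
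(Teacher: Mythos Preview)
Your proposal is correct and follows essentially the same approach as the paper's proof: both begin from the structural fact that $h$ carries the invariant foliations and singularities of $f$ to those of $g$, and then verify item by item that every ingredient of Construction~\ref{Cons: Recipe for Markov partitions} is equivariant under $h$. The only minor presentational difference is in item~(vii): the paper characterizes the rectangles of $\cR(z,n)$ via the (implicitly defined) equivalence classes of rails alluded to in item~(vi), whereas you invoke directly the complement description in Proposition~\ref{Prop:Compatibles implies Markov partition}; your route is arguably cleaner and avoids relying on the rail-class equivalence that the paper does not spell out in full.
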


\begin{proof}
	First, $h$ send transverse foliations of $f$ into transverse foliations of $g$ and singularities of $f$ into singularities of $g$, that is how we use the hypothesis that $f$ and $g$ are pseudo-Anosov homeomorphisms. But even more, $h$ sends disjoint segments of stable and unstable leaves of $f$ to disjoint segments of stable and unstable leaves of $g$, therefore, if $z$ is first intersection point of $f$, $h(z)$ is first intersection point of $g$. This shows the accuracy of Item $i)$.	A direct computation proof Item $ii)$:
	$$
	h(\cJ^u(z))=h(\cup f^{-n}(J^u(z)))=\cup g^{-n}(J^u(h(z)))= \cJ^u(h(z)).
	$$

	A compact interval $[p,x]^s \in \delta^s(z)$  have its interior disjoint from $\cJ^u(z)$ and  endpoint  $x\in \cJ^u(z)$. Then $h([p,x]^s)$ is a segment with disjoint interior of $h(\cJ^u(z))=\cJ^u(h(z))$ but endpoint $h(x)\in \cJ^u(h(z))$, this implies $h([p,x]^s)$ is contained in $\delta^s(h(z))$, hence $h(\delta^s(z))\subset \delta^s(h(z))$. The symmetric argument starting from the segment  $[p',x']^s\in \delta^s(h(z))$ and using $h^{-1}$ gives the equality. Analogously it is possible to show that $\delta^u(h(z))=h(\delta^u(z))$. The arguments in this paragraph proved the Item $iii)$.	
	
	Clearly the point $e$ is an extreme point of $\delta^s(z)$ if and only if $h(e)$ is an extreme point of $h(\delta^s(z))=\delta^s(h(z))$, moreover $e\in f^n(\delta^n(z))$ if and only if $h(e)\in g^{n}(\delta^u(h(z)))$. Therefore an extreme point $e$ of $\delta^s(z)$ is contained in $f^n(\delta^n(z))$ if and only if the extreme point $h(e)$ of $\delta^s(h(z))$ is contained in $g^{n}(\delta^u(h(z))$.	Another consequence is that the regular part of $\delta(z)$ is sent by $h$ to the regular part of $\delta(z)$ and an $u$-regular $J$-rail of $\delta(z)$ is such that $h(J)$ is a regular rail of $\delta^s(h(z))$.
	
	We claim that $J$ is an extremal rail of $\delta^s(z)$ if and only if $h(J)$ is an extremal lane of $\delta^u(h(z))$. Indeed if $R$ is the embedded rectangle of $S_f$ given by the definition of extreme rail having $J$ as a vertical boundary  component, therefore the embedded rectangle $h(R)$ satisfies the definition for $h(J)$ to be an extreme rail of $\delta^s(h(z))$, since  except for $h(J)$ all its other vertical segments are regular rails of $\delta^s(h(z))$ and it stable boundary is contained in $h(\delta^s(z))=\delta^s(h(z))$. Moreover, $J \subset f^n(\delta^u(z))$ if and only if $h(J)\subset g^n(\delta^u(h(z)))$.
	
	This proves that $\delta^s(z)$ is compatible with  $f^n(\delta^u(z))$ if and only if $\delta^s(h(z))$ is compatible with $g^n(\delta^u(h(z)))$. In this way the point $iv)$ is corroborated, but at the same time we deduce that $n(z)=n(h(z))$ because they are the minimum of the same set of natural numbers and thus the point $v)$ is obtained.
	
	Let $r$ be an equivalent class of $u$-rails for $\delta^s(z)$, as stated before $I$ is a regular rail for $\delta^s(z)$ if and only if $h(I)$ is a regular rail for $\delta^s(h(z))$. 	We denote $I\equiv_{\delta^s(z)}I'$ to indicate that any point in $I$ is $\delta^s(z)$ equivalent to any point in $I'$. In this manner $h(I)\equiv_{\delta^s(h(z))}h(I')$, because the image by $h$ of rectangles and regular rails realizing the equivalence between points in $I$ and points in $I'$ are rectangles and regular rails realizing the equivalence between points in $h(I)$ and points in ´$h(I')$. This implies that $h(r)$ is an equivalent class of $u$-rails for $\delta^s(h(z))$. Analogously the image by $h$ of an equivalent class $r'$ of $s$-rails for $f^n(\delta^u(z)$ is an equivalent class of $s$-rails for $g^n(\delta^u(h(z)))$.

	As the interior of a rectangle $R$ of $R(z,p)$ is a connected component of the intersection a equivalent class $r$ of $u$-rails of $\delta^s(z)$ with  a class $r'$ of $s$-rails for $f^n(\delta^u(z))$, then $h(R)$ is a connected component of the intersection of $h(r)\cap h(r')$ and $h(r)$ and $h(r')$ are $s,u$-rail classes for $\delta^s(h(z))$ and $g^n(\delta^u(h(z)))$, by definition $h(\overset{o}{R})$ correspond to the interior of a rectangle of $\cR(h(z),n)$ and then $h(\cR(z,n))=\cR(h(z),n)$. In this manner we obtain Item $vi)$

	Since the interior of a rectangle  $R$ of $R(z,p)$ is a connected component of the intersection an $r$ equivalent class of $u$-rails of $\delta^s(z)$ with an $r'$ class of $s$-rails for $f^n(\delta^u(z))$, then $h(R)$ is a connected component of the intersection of $h(r)\cap h(r')$, anyway $h(r)$ and $h(r')$ are $s,u$-rail classes for $\delta^s(h(z))$ and $g^n(\delta^u(h(z)))$, therefore $h(\overset{o}{R})$ correspond to the interior of a rectangle of $\cR(h(z),n)$ and then $h(\cR(z,p))=\cR(h(z),p)$. This probe the Item $vii)$ and finish our proof.
\end{proof}

When Proposition \ref{Prop: Conjugates then primitive Markov partition} is applied to $f = g = h$, we obtain that $f = f \circ f \circ f^{-1}$. It then follows that $n(z) = n(f(z))$ for every first intersection point $z$ of $f$. Therefore, the quantity $n(f^m(z))$ remains constant over the entire orbit of $z$ and we have the following corollary.

\begin{coro}\label{Coro: n(f) number}
	There exists $n(f) \in \NN$, called the \emph{compatibility order} of $f$, such that:
	$$
	n(f) = \max\{n(z) : z \text{ is a first intersection point of } f\}.
	$$
\end{coro}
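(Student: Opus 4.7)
The plan is a direct combination of two results already established, and the paragraph preceding the corollary essentially sketches the argument; I will just need to package it carefully.

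First, I would apply Proposition \ref{Prop: Conjugates then primitive Markov partition} in the degenerate situation $f=g$ and $h=f$, since the conjugacy relation $g = h\circ f\circ h^{-1}$ then reduces to the tautology $f = f\circ f\circ f^{-1}$. Item (i) of that proposition gives that $f(z)$ is again a first intersection point whenever $z$ is, and item (v) yields the key equality
$$
n(z) = n(f(z))
$$
for every first intersection point $z$. Iterating forward is immediate; iterating backward follows by applying the same equality with $z$ replaced by $f^{-1}(z)$, which is itself a first intersection point by Lemma \ref{Lemm: Image of first intersection is first intersection} applied to $f^{-1}$ (or simply by reading the definition of first intersection point, which is symmetric in forward and backward time). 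One concludes that the function $z\mapsto n(z)$ is constant along every $f$-orbit of first intersection points.

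Second, I invoke Proposition \ref{Prop: Finite number of first intersection points}: the set of first intersection points of $f$ consists of only finitely many $f$-orbits. Combining this with the orbit-constancy established in the previous step, the set
$$
A(f) := \{\, n(z) : z \text{ is a first intersection point of } f\,\}
$$
is the image of a finite set under $n(\cdot)$, and in particular it is a finite subset of $\NN$. Nonemptiness is guaranteed by Lemma \ref{Lemm: Existencia puntos de primera intersecion}, which produces at least one first intersection point (and hence, via Construction \ref{Cons: Recipe for Markov partitions}, at least one compatibility coefficient). Therefore $n(f) := \max A(f)$ exists as a well-defined natural number with the asserted property.

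There is no genuine obstacle in this proof; the corollary is simply a repackaging of Propositions \ref{Prop: Conjugates then primitive Markov partition} and \ref{Prop: Finite number of first intersection points}. The only point worth making explicit in the write-up is that the finiteness of the set of orbits and the constancy of $n(\cdot)$ along each orbit are precisely the two ingredients that reduce the \emph{a priori} infinite supremum over all first intersection points to a maximum over finitely many values.
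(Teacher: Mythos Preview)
Your proposal is correct and follows essentially the same approach as the paper: the paragraph preceding the corollary already applies Proposition~\ref{Prop: Conjugates then primitive Markov partition} with $f=g=h$ to obtain $n(z)=n(f(z))$ and hence orbit-constancy of $n(\cdot)$. You have simply made explicit the two ingredients the paper leaves tacit, namely the appeal to Proposition~\ref{Prop: Finite number of first intersection points} for finiteness of the set of values and to Lemma~\ref{Lemm: Existencia puntos de primera intersecion} for its nonemptiness.
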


In fact, the compatibility order of $f$ is invariant under topological conjugacy, as stated in the next result.

\begin{coro}\label{Coro: n(f) conjugacy invariant}
	Let $f: S_f \rightarrow S_f$ and $g: S_g \rightarrow S_g$ be two \textbf{p-A} homeomorphisms that are conjugated through a homeomorphism $h: S_f \rightarrow S_g$. Then the compatibility order of $f$ and $g$ coincides, i.e., $n(f) = n(g)$.
\end{coro}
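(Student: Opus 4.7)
The plan is to derive this corollary directly from Proposition \ref{Prop: Conjugates then primitive Markov partition} together with the definition of $n(f)$ given in Corollary \ref{Coro: n(f) number}. The key observation is that $h$ establishes a bijection on the relevant sets and preserves the invariant $n(\cdot)$ pointwise, so the suprema must agree.

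First I would argue that $h$ induces a bijection between the set $\mathcal{P}(f)$ of first intersection points of $f$ and the set $\mathcal{P}(g)$ of first intersection points of $g$. By Item i) of Proposition \ref{Prop: Conjugates then primitive Markov partition}, $h(\mathcal{P}(f)) \subset \mathcal{P}(g)$. Applying the same item to the inverse conjugacy $h^{-1}: S_g \to S_f$, which satisfies $f = h^{-1} \circ g \circ h$, gives $h^{-1}(\mathcal{P}(g)) \subset \mathcal{P}(f)$, so $h$ restricts to a bijection $\mathcal{P}(f) \to \mathcal{P}(g)$.

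Next, by Item v) of the same proposition, for every $z \in \mathcal{P}(f)$ the compatibility coefficient is conjugacy-invariant: $n(z) = n(h(z))$. Taking the supremum over all first intersection points and using the bijection above,
\[
n(f) = \max\{\, n(z) : z \in \mathcal{P}(f)\,\} = \max\{\, n(h(z)) : z \in \mathcal{P}(f)\,\} = \max\{\, n(w) : w \in \mathcal{P}(g)\,\} = n(g).
\]
Here the maxima exist because Proposition \ref{Prop: Finite number of first intersection points} guarantees only finitely many orbits of first intersection points, and $n(\cdot)$ is constant on orbits (as noted just before Corollary \ref{Coro: n(f) number}), so only finitely many distinct values occur.

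There is no real obstacle here: all of the substantive geometric content is already contained in Proposition \ref{Prop: Conjugates then primitive Markov partition}. The only point that requires a line of care is confirming that the set over which the maximum is taken is nonempty and finite, which follows from Lemma \ref{Lemm: Existencia puntos de primera intersecion} and Proposition \ref{Prop: Finite number of first intersection points} respectively, so that the maximum in the definition of $n(f)$ and $n(g)$ is genuinely attained.
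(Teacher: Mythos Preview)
Your proof is correct and follows essentially the same approach as the paper: both arguments invoke items i) and v) of Proposition~\ref{Prop: Conjugates then primitive Markov partition} to conclude that the sets $\{n(z): z\in\mathcal{P}(f)\}$ and $\{n(w): w\in\mathcal{P}(g)\}$ coincide, hence share the same maximum. Your version is slightly more explicit in justifying the bijection via $h^{-1}$ and the well-definedness of the maximum, but the underlying idea is identical.
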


\begin{proof}
	Items $i)$ and $v)$ of proposition \ref{Prop: Conjugates then primitive Markov partition} imply the following set equalities:
	\begin{align*}
		\{n(z): z \text{ is a first intersection point of } f \}= \\
		\{n(h(z)): z \text{ is a first intersection point of } f \}=\\
		\{n(z'): z' \text{ is a first intersection point of } g \}.
	\end{align*}
	Therefore, it follows that its maximum is the same and then $n(f) = n(g)$.
\end{proof}

We are ready to  introduce a distinguished set of Markov partitions of $f$.

\begin{defi}\label{Defi: primitive Markov parition}
	Let $n \geq n(f)$. The set of primitive Markov partitions of $f$ of order $n$ consists of all the Markov partitions of order $n$ generated by the first intersection points of $f$, and is denoted by:
	$$
	\mathcal{M}(f,n) := \{\mathcal{R}(z,n) : z \text{ is a first intersection point of } f\}.
	$$
\end{defi}

Another application of item $vii)$ in Proposition \ref{Prop: Conjugates then primitive Markov partition} for the case when $f=g=h$ is that if $z$ is a first intersection point of $f$ and $n\geq n(f)$, then $f(\cR(z,n))=\cR(f(z),n)$. This observation yields the following corollary:

\begin{coro}\label{Coro: Finite orbits of primitive Markov partitions}		
	Let $n\geq n(f)$. Then, there exists a finite but nonempty set of orbits of primitive Markov partitions of order $n$.
\end{coro}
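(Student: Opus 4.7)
The plan is to reduce the statement to two already-established facts: (a) there are finitely many orbits of first intersection points of $f$ (Proposition \ref{Prop: Finite number of first intersection points}), and (b) the construction $z \mapsto \cR(z,n)$ is equivariant with respect to $f$, in the sense that $f(\cR(z,n)) = \cR(f(z),n)$ whenever $n \ge n(f)$. Combining these two inputs will yield both the nonemptyness and the finiteness claimed in the corollary.

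For nonemptyness, Lemma \ref{Lemm: Existencia puntos de primera intersecion} produces at least one first intersection point $z$ of $f$. Since $n \ge n(f) \ge n(z)$, Construction \ref{Cons: Recipe for Markov partitions} applies and yields a primitive Markov partition $\cR(z,n) \in \cM(f,n)$, so $\cM(f,n) \ne \emptyset$.

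For finiteness, I would apply item $vii)$ of Proposition \ref{Prop: Conjugates then primitive Markov partition} with the self-conjugacy $g = f$ and $h = f$ (so that $g = h\circ f \circ h^{-1}$ trivially holds). This yields the equivariance $f(\cR(z,n)) = \cR(f(z),n)$ for every first intersection point $z$. As a consequence, $\cM(f,n)$ is invariant under the natural action $\cR \mapsto f(\cR)$, so the notion of an orbit in $\cM(f,n)$ is well-defined, and the assignment $z \mapsto \cR(z,n)$ descends to a surjection
\begin{equation*}
\{\text{$f$-orbits of first intersection points}\} \longrightarrow \{\text{$f$-orbits in } \cM(f,n)\}.
\end{equation*}
By Proposition \ref{Prop: Finite number of first intersection points}, the domain is finite, hence the codomain is finite as well.

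No serious obstacle is anticipated. By definition, $\cM(f,n)$ is exactly the set of primitive Markov partitions $\cR(z,n)$ as $z$ ranges over the first intersection points of $f$, so the surjectivity above is automatic; the substantive input is the equivariance from Proposition \ref{Prop: Conjugates then primitive Markov partition}, which ensures that first intersection points in the same $f$-orbit produce partitions in the same orbit. The argument is therefore essentially a direct combination of the two propositions cited.
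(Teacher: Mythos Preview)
Your proposal is correct and follows essentially the same approach as the paper: both arguments invoke the equivariance $f(\cR(z,n)) = \cR(f(z),n)$ obtained from item $vii)$ of Proposition~\ref{Prop: Conjugates then primitive Markov partition} with $f=g=h$, and then combine it with Proposition~\ref{Prop: Finite number of first intersection points} to bound the number of orbits by the number of orbits of first intersection points. The only cosmetic difference is that the paper picks orbit representatives $\{z_1,\dots,z_k\}$ explicitly, while you phrase the same step as a surjection of orbit sets.
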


\begin{proof}
	As $n\geq n(f)$, there exists at least one Markov partition $\cR(z,n)$ in $\cM(f,n)$, and therefore the orbit of $\cR(z,n)$, given by $\{\cR(f^m(z),n)\}_{m\in \ZZ}$, is contained in $\cM(f,n)$.
	
	Let $\{z_1, \cdots, z_k\}$ be a set of first intersection points of $f$ such that any other first intersection point can be written as $f^m(z_i)$ for a unique $i \in \{1,\cdots, k\}$, and no two different points $z_i$ and $z_j$ are in the same orbit. . Let $\cR(z,n)$ be a primitive Markov partition of order $n$. If $z$ is any first intersection point, $z=f^m(z_i)$ and we have:
	$$
	f^m(\cR(z_i,n))=\cR(f^m(z_i),n)=\cR(z,n). 
	$$
	Therefore, in $\cM(f,n)$, there are at most $k$ different orbits of primitive Markov partitions of order $n$.
\end{proof}

\subsection{Primitive geometric types}We are interested in studying the set of geometric types produced by the orbit of a primitive Markov partition. Therefore, it is important to understand the behavior of a geometric Markov partition under the action of a homeomorphism that preserves the orientation.

\begin{defi}\label{Defi: induced geometric Markov partition}
	Let $f: S \rightarrow S$ be a \textbf{p-A} homeomorphism, let $h: S \rightarrow S$ be an orientation-preserving homeomorphism, and let $g := h \circ f \circ h^{-1}$.
	
	If $\mathcal{R}$ is a geometric Markov partition of $f$, the geometric Markov partition of $g$ \emph{induced} by $h$ is the Markov partition $h(\mathcal{R}) = \{ h(R_i) \}_{i=1}^n$. To each rectangle $h(R_i)$, we choose the unique orientation in the vertical and horizontal foliations such that $h$ preserves both orientations at the same time.
\end{defi}

The following lemma clarifies the correspondence between the horizontal and vertical rectangles of the partition $\mathcal{R}$ and those of $h(\mathcal{R})$.

\begin{lemm}\label{Lemm: Conjugated partitions same type.}
	Let $f$ and $g$ be two generalized pseudo-Anosov homeomorphisms conjugated through a homeomorphism $h$ that preserves the orientation. Let $\cR=\{R_i\}_{i=1}^n$  be a geometric Markov partition for $f$, and let $h(\cR)=\{h(R_i)\}_{i=1}^n$ be the geometric Markov partition of $g$ induced by $h$. In this situation, $H$ is a horizontal sub-rectangle of $(f,\mathcal{R})$ if and only if $h(H)$ is a horizontal sub-rectangle of $(g,h(\mathcal{R}))$. Similarly, $V$ is a vertical sub-rectangle of $(f,\mathcal{R})$ if and only if $h(V)$ is a vertical sub-rectangle of $(g,h(\mathcal{R}))$.
\end{lemm}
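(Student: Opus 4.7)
The plan is to reduce the lemma to two easy observations: $(i)$ the conjugating homeomorphism $h$ sends the invariant foliations of $f$ to those of $g$, and $(ii)$ conjugation exchanges the sets appearing in Definition~\ref{Defi: Sub rectangles} of horizontal and vertical sub-rectangles of a pair $(\cdot,\cdot)$. Since $h$ is a homeomorphism conjugating two \textbf{p-A} homeomorphisms, it necessarily maps leaves of $\cF^s_f$ to leaves of $\cF^s_g$ and leaves of $\cF^u_f$ to leaves of $\cF^u_g$ (see, e.g., the uniqueness of the invariant foliations for \textbf{p-A} homeomorphisms). In particular, $h$ sends rectangles for $f$ to rectangles for $g$, interiors to interiors, corners to corners, and stable/unstable boundary components to stable/unstable boundary components. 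Combined with the fact that $h$ preserves the orientation of the surface, the induced vertical and horizontal directions on $h(R_i)$ are exactly the images under $h$ of those on $R_i$.

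First I would treat the horizontal sub-rectangles. By Definition~\ref{Defi: Ver Hor sub rectangle (f,R)}, $H\in\cH(f,\cR,i)$ if and only if there exists $j$ such that $\overset{o}{H}$ is a connected component of $\overset{o}{R_i}\cap f^{-1}(\overset{o}{R_j})$. Applying $h$ and using the conjugacy relation $h\circ f^{-1}=g^{-1}\circ h$, one obtains
\[
h(\overset{o}{H})\ \text{is a connected component of}\ h(\overset{o}{R_i})\cap g^{-1}(h(\overset{o}{R_j})),
\]
since $h$ is a homeomorphism and thus preserves interiors, intersections and connected components. Then I need to check that $h(H)$ is indeed a horizontal sub-rectangle of $h(R_i)$ in the sense of Definition~\ref{Defi: Sub rectangles}, i.e.\ that $\overset{o}{I}_x(h(H))=\overset{o}{I}_x(h(R_i))$ for each $x\in\overset{o}{h(H)}$. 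But this is immediate: the stable leaves of $g$ through $h(R_i)$ are the $h$-images of the stable leaves of $f$ through $R_i$, so the horizontal arc of $h(H)$ through $x=h(y)$ is exactly $h(\overset{o}{I}_y(H))=h(\overset{o}{I}_y(R_i))=\overset{o}{I}_x(h(R_i))$. Running the same argument with $h^{-1}$ in place of $h$ (noting $h^{-1}$ conjugates $g$ to $f$ and also preserves orientation) gives the converse, proving the horizontal statement.

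The vertical statement is symmetric. One rewrites $V\in\cV(f,\cR,j)$ as: $\overset{o}{V}$ is a connected component of $\overset{o}{R_j}\cap\bigcup_i f^{-1}(\overset{o}{R_i})$, equivalently of $f(\overset{o}{R_k})\cap\overset{o}{R_j}$ for some $k$. Using $h\circ f = g\circ h$ one deduces that $h(\overset{o}{V})$ is a connected component of $g(h(\overset{o}{R_k}))\cap h(\overset{o}{R_j})$, and the equality of vertical leaves $\overset{o}{J}_x(h(V))=\overset{o}{J}_x(h(R_j))$ follows exactly as before from the fact that $h$ sends unstable arcs of $f$ to unstable arcs of $g$. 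The converse again uses $h^{-1}$.

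The only subtle point, and what I would regard as the main (but modest) obstacle, is justifying rigorously that $h$ carries $\cF^{s,u}_f$ to $\cF^{s,u}_g$ rather than swapping them or permuting leaves badly; this uses the characterization of the stable/unstable foliations as the pair of $h\circ f\circ h^{-1}$-invariant transverse measured foliations with contraction/expansion rates $\lambda^{\pm 1}$, together with the hypothesis that $h$ preserves orientation so that stable cannot be swapped with unstable. Once this is in place, everything else is a direct bookkeeping of connected components and of leaf coincidences, which is why I would keep the written proof short.
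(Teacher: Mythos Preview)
Your proposal is correct and follows the same approach as the paper, namely the observation that $h(\overset{o}{R_i})=\overset{o}{h(R_i)}$ together with the conjugacy relation forces connected components of $\overset{o}{R_i}\cap f^{\pm 1}(\overset{o}{R_j})$ to correspond bijectively to connected components of $\overset{o}{h(R_i)}\cap g^{\pm 1}(\overset{o}{h(R_j)})$; the paper simply states this in one line while you spell out the leaf-coincidence check from Definition~\ref{Defi: Sub rectangles} as well. One small correction: in your last paragraph, the fact that $h$ sends $\cF^s_f$ to $\cF^s_g$ (and not to $\cF^u_g$) does not rely on orientation preservation but on the dynamical characterization of stable leaves (forward contraction); orientation preservation is used only in Definition~\ref{Defi: induced geometric Markov partition} to coherently transport the vertical direction.
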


\begin{proof}
	
	Observe that $h(\overset{o}{R_i})=\overset{o}{h(R_i)}$. Therefore, $C$ is a connected component of $\overset{o}{R_i}\cap f^{\pm}(\overset{o}{R_j})$ if and only if $h(C)$ is a connected component of $\overset{o}{h(R_i)}\cap g^{\pm}(\overset{o}{h(R_j)})$.
	
\end{proof}

\begin{theo}\label{Theo: Conjugated partitions same types}
	Let $f$ and $g$ be generalized pseudo-Anosov homeomorphisms conjugated through a homeomorphism $h$ that preserves the orientation, i.e., $g = h \circ f \circ h^{-1}$. Let $\cR=\{R_i\}_{i=1}^n$ be a geometric Markov partition for $f$, and let $h(\cR)=\{h(R_i)\}_{i=1}^n$ be the geometric Markov partition of $g$ induced by $h$. In this situation, the geometric types of $(g,h(\cR))$ and $(f,\cR)$ are the same.
\end{theo}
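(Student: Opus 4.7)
The plan is to verify, component by component, that each of the four ingredients of the geometric type $T(f,\mathcal{R})=(n,\{(h_i,v_i)\}_{i=1}^n,\rho,\epsilon)$ is preserved by the conjugacy $h$. The preceding Lemma \ref{Lemm: Conjugated partitions same type.} already furnishes a natural bijection $H\mapsto h(H)$ between $\mathcal{H}(f,\mathcal{R},i)$ and $\mathcal{H}(g,h(\mathcal{R}),i)$, and likewise between the vertical subrectangles. From this alone I get that the number of rectangles $n$ and the pairs $(h_i,v_i)$ coincide for $(f,\mathcal{R})$ and $(g,h(\mathcal{R}))$.

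The central step is to check that this bijection is compatible with the \emph{labelling} of subrectangles. Recall that the horizontal subrectangles of $R_i$ are enumerated from bottom to top with respect to the vertical direction of $R_i$, while the vertical subrectangles of $R_j$ are enumerated from left to right with respect to its horizontal direction. By Definition \ref{Defi: induced geometric Markov partition}, the vertical direction on $h(R_i)$ is chosen precisely so that $h$ preserves vertical orientations; moreover, since $h$ preserves the orientation of $S$ and Definition \ref{Defi: Horizontal Vertical direction Rec} pins down the horizontal direction from the vertical direction plus the surface orientation, $h$ automatically preserves horizontal orientations as well. I would conclude that $h(H^i_j)$ is the $j$-th horizontal subrectangle of $h(R_i)$ and $h(V^k_\ell)$ is the $\ell$-th vertical subrectangle of $h(R_k)$.

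With the labellings aligned, the identity $\rho_f(i,j)=(k,\ell)$ (meaning $f(H^i_j)=V^k_\ell$) transports directly through $g=h\circ f\circ h^{-1}$, giving
\[
g(h(H^i_j)) \;=\; h(f(H^i_j)) \;=\; h(V^k_\ell),
\]
so $\rho_g(i,j)=(k,\ell)=\rho_f(i,j)$. For the orientation map $\epsilon$, I would argue that along the vertical leaves through $H^i_j$, $h$ sends the chosen vertical direction to the vertical direction on $h(H^i_j)$; then $f$ alters the sign by the factor $\epsilon_f(i,j)$ when comparing with the vertical direction on $V^k_\ell$; and applying $h$ once more—which by construction again preserves vertical orientations—introduces no further sign. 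Hence $\epsilon_g(i,j)=\epsilon_f(i,j)$.

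I expect the only real difficulty to be notational rather than conceptual: making sure the orientation-preservation of $h$ is tracked faithfully through the cascade of Definitions \ref{Defi: Horizontal Vertical direction Rec}, \ref{Defi: induced geometric Markov partition}, \ref{Defi: Permutation (f,cR)} and \ref{Defi: Orientation (f,cR)}, and that the bottom-to-top and left-to-right conventions are applied consistently on both sides. Once this bookkeeping is set up, the equality $T(f,\mathcal{R})=T(g,h(\mathcal{R}))$ is a naturality statement: every piece of the construction of the geometric type commutes with orientation-preserving conjugation.
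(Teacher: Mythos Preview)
Your proposal is correct and follows essentially the same approach as the paper: a component-by-component verification that $n$, $(h_i,v_i)$, $\rho$, and $\epsilon$ are preserved, using Lemma~\ref{Lemm: Conjugated partitions same type.} for the subrectangle bijection and the conjugacy relation $g\circ h=h\circ f$ to transport $\rho$ and $\epsilon$. Your treatment of why the labelling is preserved is in fact slightly more explicit than the paper's, since you spell out how orientation-preservation of $h$ forces the horizontal directions to match once the vertical ones do.
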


\begin{proof}
	Let $T(f,\cR)=(n,\{h_i,v_i\},\Phi:=(\rho,\epsilon))$ be the geometric type of the geometric Markov partition $\cR$ of $f$. Let $T(g,h(\cR))=(n',\{h'_i,v'_i\},\Phi'_T:=(\rho',\epsilon'))$ be the geometric type of the induced Markov partition.  A direct consequence of Lemma  \ref{Lemm: Conjugated partitions same type.} is that: $n=n'$, $h_i=h'_i$ and $v_i=v'_i$.
	
	Let $\{\overline{H^i_j}\}_{j=1}^{h_i}$ be the set of horizontal sub-rectangles of $h(R_i)$, labeled with respect to the induced vertical orientation in $h(R_i)$. Similarly, we define $\{\overline{V^k_l}\}_{l=1}^{v_k}$ as the set of vertical sub-rectangles of $h(R_k)$, labeled with respect to the horizontal orientation induced by $h$ in $h(R_k)$.

	By the we choose the orientations in $h(R_i)$ and in $h(R_k)$ is clear that:
	$$
	h(H^i_j) =\overline{H^i_j} \text{ and } h(V^k_l) =\overline{V^k_l}.
	$$
	Even more, using the conjugacy we have that: if $f(H^i_j)=V^k_l$ then
	$$
	g(\overline{H^i_j})= g(h(H^i_j))=h(f(H^i_j))=h(V^k_l)=\overline{V^k_l}.
	$$
	
	This implies that in the geometric types, $\rho = \rho'$.
	
	Suppose that $\overline{V^k_l} = g(\overline{H^i_j})$, so the latter set is equal to $h \circ f \circ h^{-1}(\overline{H^i_j})$. The homeomorphism $h$ preserves the vertical orientations between $R_i$ and $h(R_i)$, as well as between $R_k$ and $h(R_k)$. Therefore, $f$ sends the positive vertical orientation of $H^i_j$ with respect to $R_i$ to the positive vertical orientation of $V^k_l$ with respect to $R_k$ if and only if $g$ sends the positive vertical orientation of $\overline{H^i_j}$ with respect to $h(R_i)$ to the positive vertical orientation of $\overline{V^k_l}$ with respect to $h(R_k)$. Then it follows that $\epsilon(i,j) = \epsilon'(i,j)$ and our proof is ended.
\end{proof}

\begin{coro}\label{Coro: constant type in orbit}
	For any primitive Markov partition $\mathcal{R}(z,n)$, where $n \geq n(f)$ and $z$ is a first intersection point of $f$, the following equality holds for all $m \in \mathbb{Z}$:
	$$
	T(\mathcal{R}(z,n)) = T(\mathcal{R}(f^m(z),n)).
	$$
\end{coro}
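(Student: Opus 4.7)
The plan is to combine Item (vii) of Proposition \ref{Prop: Conjugates then primitive Markov partition} with Theorem \ref{Theo: Conjugated partitions same types}, specialized to the trivial conjugacy $f = f \circ f \circ f^{-1}$. Since $f$ is a \textbf{p-A} homeomorphism, Definition \ref{Defi: Homeo p-A} guarantees that $f$ itself is an orientation-preserving homeomorphism, so both results apply with $g=f$ and $h=f$.

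First I would establish the base case $m=1$. By Item (i) of Proposition \ref{Prop: Conjugates then primitive Markov partition}, $f(z)$ is again a first intersection point of $f$, and by Item (v), $n(f(z))=n(z)\leq n(f)$, so $\mathcal{R}(f(z),n)$ is well defined whenever $\mathcal{R}(z,n)$ is. Item (vii) then yields
$$
f(\mathcal{R}(z,n)) \;=\; \mathcal{R}(f(z),n).
$$
Interpreting the left-hand side as the induced geometric Markov partition in the sense of Definition \ref{Defi: induced geometric Markov partition}, Theorem \ref{Theo: Conjugated partitions same types} gives
$$
T(f,\mathcal{R}(z,n)) \;=\; T\!\bigl(f,\,f(\mathcal{R}(z,n))\bigr) \;=\; T(f,\mathcal{R}(f(z),n)).
$$

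The induction on $m>0$ is then immediate: apply the same statement with $z$ replaced by $f^{j}(z)$ for $j=0,\dots,m-1$, each of which is a first intersection point of $f$ by Lemma \ref{Lemm: Image of first intersection is first intersection} and has the same compatibility coefficient by Item (v) of the proposition. For $m<0$, the same argument works verbatim using the orientation-preserving conjugacy $h=f^{-1}$, since $f=f^{-1}\circ f\circ f$; Items (i), (v), (vii) hold symmetrically and yield $f^{-1}(\mathcal{R}(z,n))=\mathcal{R}(f^{-1}(z),n)$ with equal geometric types.

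The only point requiring care — and the closest thing to a real obstacle — is a bookkeeping issue about vertical orientations: the identity $f(\mathcal{R}(z,n))=\mathcal{R}(f(z),n)$ must be read at the level of \emph{geometric} Markov partitions, meaning that the vertical directions transported by $f$ in the sense of Definition \ref{Defi: induced geometric Markov partition} must coincide with those canonically attached to $\mathcal{R}(f(z),n)$. Since $f$ is orientation-preserving and the vertical direction of each rectangle is prescribed compatibly with the ambient orientation of $S$, the transported direction agrees with the canonical choice, so no mismatch can occur and Theorem \ref{Theo: Conjugated partitions same types} applies without qualification. This closes the argument.
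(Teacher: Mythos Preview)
Your proof is correct and follows exactly the approach the paper intends: specialize Proposition~\ref{Prop: Conjugates then primitive Markov partition}(vii) and Theorem~\ref{Theo: Conjugated partitions same types} to the self-conjugacy $g=f$, $h=f^{\pm 1}$, and iterate. One small quibble with your final paragraph: the vertical direction of a rectangle is \emph{not} uniquely determined by compatibility with the ambient orientation of $S$ (both choices are compatible, they simply induce opposite horizontal directions); the correct reason no mismatch occurs is that the geometrization \emph{induced} by $f$ on $f(\cR(z,n))$ in the sense of Definition~\ref{Defi: induced geometric Markov partition} is by construction a geometrization of $\cR(f(z),n)$, and that is all Theorem~\ref{Theo: Conjugated partitions same types} requires.
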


\begin{theo}\label{Theo: finite geometric types}
	For every $n \geq n(f)$, the set $\cT(f,n) := \{T(\cR) : \cR \in \cM(f,n)\}$  of the so-called \emph{primitive geometric types} $f$ of order $n$ is finite.
\end{theo}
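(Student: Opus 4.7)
The plan is to show that $\cT(f,n)$ is the image of a map whose domain is already essentially finite, by combining two results from earlier in the paper. The first is Proposition \ref{Prop: Finite number of first intersection points}, which gives only finitely many orbits of first intersection points of $f$; choose a finite set of orbit representatives $\{z_1,\ldots,z_k\}$. The second is Corollary \ref{Coro: constant type in orbit}, which states that the geometric type is constant along the $f$-orbit of a primitive Markov partition: $T(\cR(z,n)) = T(\cR(f^m(z),n))$ for every $m\in\ZZ$. The hypothesis $n\ge n(f)$ is needed exactly to guarantee that for every first intersection point $z$ one has $n\ge n(z)$, so that $\cR(z,n)\in\cM(f,n)$ is well defined (Definition \ref{Defi: Primitive Markov partitions}, Corollary \ref{Coro: n(f) number}).

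Given these two inputs, the argument I would write is: any partition in $\cM(f,n)$ is of the form $\cR(z,n)$ for some first intersection point $z$, which can be written as $z=f^m(z_i)$ for a unique index $i\in\{1,\ldots,k\}$ and some $m\in\ZZ$. Applying Corollary \ref{Coro: constant type in orbit} yields $T(\cR(z,n)) = T(\cR(z_i,n))$, and therefore
$$
\cT(f,n)\;\subseteq\;\bigl\{\,T(\cR(z_1,n)),\ldots,T(\cR(z_k,n))\,\bigr\}.
$$
Hence $|\cT(f,n)|\le k<\infty$, which is the desired conclusion.

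The only nontrivial ingredient is the invariance of the geometric type along an orbit, and this is already packaged in Corollary \ref{Coro: constant type in orbit}, which itself rests on Theorem \ref{Theo: Conjugated partitions same types} applied to the orientation-preserving self-conjugacy $f = f\circ f\circ f^{-1}$ together with Proposition \ref{Prop: Conjugates then primitive Markov partition}(vii) to identify $f(\cR(z,n))$ with $\cR(f(z),n)$. Thus the ``hard part'' is really done elsewhere; what remains here is only the observation that a finite set of orbit representatives, combined with orbit-constancy of the type, collapses the a priori infinite family $\cM(f,n)$ to at most $k$ distinct geometric types. I would keep the write-up short: invoke Proposition \ref{Prop: Finite number of first intersection points}, invoke Corollary \ref{Coro: constant type in orbit}, and conclude with the displayed inclusion above.
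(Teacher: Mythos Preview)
Your proof is correct and follows essentially the same approach as the paper: both combine the finiteness of orbits of first intersection points with the orbit-constancy of the geometric type (Corollary \ref{Coro: constant type in orbit}). The only cosmetic difference is that the paper cites the intermediate Corollary \ref{Coro: Finite orbits of primitive Markov partitions} (finitely many orbits of primitive Markov partitions), whereas you bypass it and appeal directly to Proposition \ref{Prop: Finite number of first intersection points}; since the former is itself proved from the latter via the same orbit-representative argument you wrote out, the two proofs are effectively identical.
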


\begin{proof}
	In view of Corollary \ref{Coro: Finite orbits of primitive Markov partitions}, for all $n \geq n(f)$, there exists a finite number of orbits of primitive Markov partitions of $f$ of order $n$. Moreover, the geometric type is constant within each orbit of such a partition. Therefore, for all $n \geq n(f)$, there are only a finite number of distinct geometric types corresponding to the primitive geometric Markov  partitions of order $n$.  
\end{proof}

Of course, within all the primitive geometric types, there is a distinguished family that, in some sense, has the least complexity with respect to its compatibility coefficient.

\begin{defi}\label{defi: Canonical types}
	The elements of the set $\cT(f, n(f))$ are called the canonical geometric types of $f$.
\end{defi}

In fact, in Corollary \ref{Coro: n(f) conjugacy invariant}, we have proved that if $f$ and $g$ are topologically conjugate, then $n(f)=n(g)$. By Theorem  \ref{Theo: Conjugated partitions same types}, we deduce that for every $n\geq n(f)=n(g)$, the set of primitive types of $f$ of order $n$ coincides with the set of primitive geometric types of $g$ of order $n$.

\begin{coro}\label{Coro: conjugated implies same cannonical types}
	If $f$ and $g$ are conjugate generalized pseudo-Anosov homeomorphisms,then,  for all $m\geq n(f)=n(g)$, $\cT(f,m)=\cT(g,m)$ and their canonical types are the same, i.e., $\cT(f,n(f))=\cT(g,n(g))$.
\end{coro}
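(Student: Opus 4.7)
The plan is to assemble the statement directly from the three results developed earlier in this section: Corollary~\ref{Coro: n(f) conjugacy invariant} supplies the equality $n(f)=n(g)$, Proposition~\ref{Prop: Conjugates then primitive Markov partition}(vii) transports primitive Markov partitions through the conjugacy $h$, and Theorem~\ref{Theo: Conjugated partitions same types} shows that the geometric type is preserved by orientation-preserving conjugacies. As in the rest of the section we understand the conjugating homeomorphism $h$ to be orientation-preserving, since otherwise the vertical/horizontal orientations used to define geometric types need not be respected.

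First, I would fix an integer $m\ge n(f)=n(g)$ and an element $T\in\cT(f,m)$. By definition $T=T(f,\cR)$ for some $\cR\in\cM(f,m)$, and by the definition of $\cM(f,m)$ we can write $\cR=\cR(z,m)$ for a first intersection point $z$ of $f$. Applying item (vii) of Proposition~\ref{Prop: Conjugates then primitive Markov partition} with the given $h$, we obtain
\[
h(\cR(z,m))=\cR(h(z),m),
\]
and by item (i) of the same proposition $h(z)$ is a first intersection point of $g$; hence $h(\cR)\in\cM(g,m)$. Theorem~\ref{Theo: Conjugated partitions same types} then yields $T(f,\cR)=T(g,h(\cR))$, which places $T$ inside $\cT(g,m)$. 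This shows $\cT(f,m)\subseteq\cT(g,m)$.

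Second, the reverse inclusion follows by the symmetric argument applied to the orientation-preserving conjugacy $h^{-1}\colon S_g\to S_f$, which satisfies $f=h^{-1}\circ g\circ h$ and sends primitive Markov partitions of $g$ of order $m$ to primitive Markov partitions of $f$ of order $m$ while preserving geometric types. Combining both inclusions yields $\cT(f,m)=\cT(g,m)$ for every $m\ge n(f)=n(g)$. Specializing to $m=n(f)=n(g)$ produces the equality of canonical geometric types $\cT(f,n(f))=\cT(g,n(g))$, as required.

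No serious obstacle is expected: the whole argument is a bookkeeping assembly of (i), (vii) of Proposition~\ref{Prop: Conjugates then primitive Markov partition}, Corollary~\ref{Coro: n(f) conjugacy invariant}, and Theorem~\ref{Theo: Conjugated partitions same types}. The only point that needs a moment of care is confirming that $h^{-1}$ is also orientation-preserving (immediate) and that the definition of $\cM(g,m)$ matches the image $\{h(\cR):\cR\in\cM(f,m)\}$ via the bijection $z\mapsto h(z)$ on first intersection points, which is exactly what items (i) and (vii) of the proposition provide.
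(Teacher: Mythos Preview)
Your argument is correct and follows the same approach as the paper, which simply invokes Corollary~\ref{Coro: n(f) conjugacy invariant} for $n(f)=n(g)$ and Theorem~\ref{Theo: Conjugated partitions same types} for the equality of types. You make explicit the intermediate step---that $h$ carries $\cM(f,m)$ onto $\cM(g,m)$ via items~(i) and~(vii) of Proposition~\ref{Prop: Conjugates then primitive Markov partition}---which the paper leaves implicit, and you correctly flag the orientation-preserving hypothesis on $h$ needed for Theorem~\ref{Theo: Conjugated partitions same types} to apply.
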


\subsection*{Acknowledgements}

Part of this work was carried out as the author's PhD thesis at the \emph{Université de Bourgogne} under the supervision of Christian Bonatti, whose guidance is gratefully acknowledged.  
The author was supported during his doctoral studies by the program \emph{Becas al Extranjero Convenios GOBIERNO FRANCÉS 2019--1} (CONACYT).  
A preliminary version of this article was written at the Institute of Mathematics, UNAM, Oaxaca, with support from Lara Bossinger through the grant \emph{DGAPA--PAPIIT IA100724}.  
The author also thanks Ferran Valdez for valuable discussions and comments on the final version of the manuscript.

\end{document}